\documentclass[11pt]{amsart}
\pdfoutput=1
\usepackage{fullpage}
\usepackage{amsfonts, amsmath, amsthm, amssymb, amstext, graphicx, dsfont, stmaryrd, extarrows,amsthm}
\usepackage[T1]{fontenc}
\usepackage{textcomp}
\usepackage{lmodern}
\usepackage{microtype} 
\usepackage{amscd}
\usepackage{mathrsfs}
\usepackage[colorinlistoftodos]{todonotes}
\usepackage{color}
\usepackage{bbm}
\usepackage{verbatim}
\usepackage[mathscr]{euscript}
\usepackage{mathtools} 
\usepackage{mathrsfs} 

\usepackage[english]{babel}
\usepackage[utf8]{inputenc}
\usepackage{lmodern}
\usepackage[shortlabels]{enumitem}

\usepackage{nicefrac} 
\usepackage{textcomp}
\usepackage{transparent} 
\usepackage{setspace} 
\usepackage[autostyle,italian=guillemets]{csquotes}

\usepackage[style=alphabetic,backend=biber]{biblatex}
\addbibresource{bibliografia.bib}

\usepackage{hyperref}
\hypersetup{
	colorlinks=true,
	citecolor=blue,
	linkcolor=blue,
	filecolor=magenta,      
	urlcolor=cyan,
}

\usepackage{tikz}
\usepackage{tikz-cd}
\usetikzlibrary{arrows,calc,matrix}
\usepackage{booktabs, hhline} 

\numberwithin{equation}{section}
\theoremstyle{plain}
\newtheorem{theorem}[equation]{Theorem}
\newtheorem*{thm*}{Theorem}

\newtheorem{proposition}[equation]{Proposition}
    
\newtheorem{corollary}[equation]{Corollary}       
\newtheorem{lemma}[equation]{Lemma}

\theoremstyle{definition} 
\newtheorem{definition}[equation]{Definition} 
\newtheorem{hyp}[equation]{Hypothesis}
\newtheorem{ex}[equation]{Example}

\newtheorem{rem}[equation]{Remark}   

\newtheorem{nota}[equation]{Notation}

\newtheorem{choice}[equation]{Choice}

\setlength{\parindent}{0cm}
\setlength{\parskip}{0.8ex}

\allowdisplaybreaks[1]

\raggedbottom 

\makeatletter 
\renewenvironment{proof}[1][\proofname]{%
  \par\pushQED{\qed}\normalfont%
  \topsep6\p@\@plus6\p@\relax
  \trivlist\item[\hskip\labelsep\bfseries#1\@addpunct{.}]%
  \ignorespaces
}{%
  \popQED\endtrivlist\@endpefalse
}
\makeatother

\def\bigquotient#1#2{%
    \raise1ex\hbox{$#1$}\Big/\lower1ex\hbox{$#2$}%
}

\def\quotient#1#2{\mathchoice
{\raisebox{.3ex}{$\mathsurround=0pt\displaystyle #1$}\mkern -1mu/\mkern -1mu\raisebox{-.5ex}{$\mathsurround=0pt\displaystyle #2$}}
{\raisebox{.3ex}{$\mathsurround=0pt\textstyle #1$}\mkern -1mu/\mkern -1mu\raisebox{-.5ex}{$\mathsurround=0pt\textstyle #2$}}
{\raisebox{.1ex}{$\mathsurround=0pt\scriptstyle #1$}\mkern -1mu/\mkern -1mu\raisebox{-.3ex}{$\mathsurround=0pt\scriptstyle #2$}}
{\raisebox{.1ex}{$\mathsurround=0pt\scriptscriptstyle #1$}\mkern -1mu/\mkern -1mu\raisebox{-.1ex}{$\mathsurround=0pt\scriptscriptstyle #2$}}}

\usepackage{letltxmacro}
\LetLtxMacro\orgvdots\vdots
\LetLtxMacro\orgddots\ddots

\makeatletter
\newcommand{\colim@}[2]{%
  \vtop{\m@th\ialign{##\cr
    \hfil$#1\operator@font colim$\hfil\cr
    \noalign{\nointerlineskip\kern1.5\ex@}#2\cr
    \noalign{\nointerlineskip\kern-\ex@}\cr}}%
}
\newcommand{\colim}{%
  \mathop{\mathpalette\colim@{\rightarrowfill@\scriptscriptstyle}}\nmlimits@
}
\renewcommand{\varprojlim}{%
  \mathop{\mathpalette\varlim@{\leftarrowfill@\scriptscriptstyle}}\nmlimits@
}
\renewcommand{\varinjlim}{%
  \mathop{\mathpalette\varlim@{\rightarrowfill@\scriptscriptstyle}}\nmlimits@
}
\makeatother

\tikzset{%
	symbol/.style={%
		draw=none,
		every to/.append style={%
			edge node={node [sloped, allow upside down, auto=false]{$#1$}}}
	}
}
\emergencystretch=1em

\newcommand{\Q}{\mathbb{Q}}

\newcommand{\Z}{\mathbb{Z}}
\newcommand{\C}{\mathbb{C}}
\newcommand{\T}{\mathbb{T}}

\newcommand{\A}{\mathcal{A}}

\renewcommand{\c}{\mathcal{C}}

\newcommand{\F}{\mathcal{F}}

\newcommand{\E}{\mathcal{E}}
\newcommand{\K}{\mathcal{K}}

\renewcommand{\O}{\mathcal{O}}

\newcommand{\I}{\mathcal{I}}
\newcommand{\inj}{\mathbb{I}}

\renewcommand{\epsilon}{\varepsilon}
\renewcommand{\phi}{\varphi}

\newcommand{\modulo}[1]{\left\lvert#1\right\rvert}
\newcommand{\gen}[1]{\langle #1 \rangle}

\renewcommand{\i}[1]{\iota_{#1}}

\newcommand{\catname}[1]{{\normalfont\textbf{#1}}}

\newcommand{\oef}[1]{\O_{\quotient{\F}{#1}}}
\newcommand{\oefh}[1]{\O_{\quotient{\F}{H_{#1}}}}
\newcommand{\of}{\O_{\F}}
\newcommand{\tens}[1]{%
	\mathbin{\mathop{\otimes}\displaylimits_{#1}}%
}

\newcommand{\eginv}{\E_G^{-1}}
\newcommand{\eh}[1]{\E_{H_{#1}}}
\newcommand{\ehinv}[1]{\E_{H_{#1}}^{-1}}
\newcommand{\egh}[1]{\E_{G/H_{#1}}}
\newcommand{\eghinv}[1]{\E_{G/H_{#1}}^{-1}}
\newcommand{\einv}[1]{\E_{#1}^{-1}}

\newcommand{\fix}[1]{\Phi^{#1}}

\newcommand{\pai}[1]{\pi_*^{#1}}
\newcommand{\defp}{DE\F_+}
\newcommand{\defhp}[1]{DE\F/{#1}_+}

\renewcommand{\sp}[1]{#1\Spectra}

\newcommand{\Sp}{\catname{Sp}}

\DeclareMathOperator{\Hom}{Hom}
\DeclareMathOperator{\Div}{Div}
\DeclareMathOperator{\Ker}{Ker}

\DeclareMathOperator{\Dim}{dim}
\DeclareMathOperator{\Codim}{codim}
\DeclareMathOperator{\Id}{Id}

\DeclareMathOperator{\supp}{supp}
\DeclareMathOperator{\Spec}{Spec}
\DeclareMathOperator{\Spectra}{-Spectra}
\DeclareMathOperator{\Ext}{Ext}

\DeclareMathOperator{\im}{Im}

\DeclareMathOperator{\Mod}{Mod}
\DeclareMathOperator{\Shv}{Shv}
\DeclareMathOperator{\Fib}{Fib}

\DeclareMathOperator{\KQ}{KQ}

\newcommand{\ox}{\O_{\X}}

\newcommand{\odij}{\O_{D_{ij}}}
\newcommand{\oxx}[1]{\O_{\X,#1}}

\newcommand{\tp}[1]{#1^{\text{TP}}}
\DeclareMathOperator{\CC}{CC}
\renewcommand{\div}[1]{D_{#1,n_{#1}}}
\newcommand{\hbg}[1]{H^*(BG/{#1})}

\newcommand{\h}[2]{\mathcal{H}^{#1}_{#2}}
\newcommand{\dij}{{D_{ij}}}
\newcommand{\dip}{{D_{i,P}}}
\newcommand{\tij}{{t_{ij}}}
\newcommand{\cij}{{c_{ij}}}
\newcommand{\hatij}{{\hat t_{ij}}}

\newcommand{\zar}[1]{#1^{\text{Zar}}}

\newcommand{\ov}[1]{\overline {#1}}

\newcommand{\susp}[1]{\Sigma^{#1}}
\newcommand{\odv}{\O(-D_V)}

\newcommand{\mij}{{m_{ij}}}
\newcommand{\tin}[1]{t_{#1, n_{#1}}}

\newcommand{\ha}{H_A^{n_A}}
\newcommand{\hb}{H_B^{n_B}}
\newcommand{\xa}{x_A}
\newcommand{\xb}{x_B}
\newcommand{\za}{z_A^{n_A}}
\newcommand{\zb}{z_B^{n_B}}
\newcommand{\hatone}{{\hat t_{A,n_A}}}
\newcommand{\hattwo}{{\hat t_{B,n_B}}}
\newcommand{\pia}{\pi_A^{n_A}}
\newcommand{\pib}{\pi_B^{n_B}}
\newcommand{\ta}{{t_{A,n_A}}}
\newcommand{\tb}{{t_{B,n_B}}}
\newcommand{\tijvij}{t^{v_{ij}}_{ij}}

\newcommand{\X}{\chi}
\newcommand{\XF}{\mathfrak{X}}
\newcommand{\var}[1]{\XF(#1)}
\newcommand{\bvar}[1]{\bar{\XF}(#1)}

\newcommand{\etinv}{\E_{\T^2}^{-1}}

\newcommand{\ethinv}[1]{\E_{\T^2/H_{#1}}^{-1}}
\newcommand{\hbti}[1]{H^*(B\T^2/{H_i^{#1}})}
\newcommand{\hbt}[1]{H^*(B\T^2/{#1})}

\title{An algebraic model for rational $\T^2$-equivariant elliptic cohomology}
\author{Matteo Barucco}
\address{Department of Mathematics, University of Warwick}
\email{Matteo.Barucco@Warwick.ac.uk}
\begin{document}
\begin{abstract}
	We construct a rational $\T^2$-equivariant elliptic cohomology theory for the $2$-torus $\T^2$, starting from an elliptic curve $\c$ over $\C$ and a coordinate data around the identity. The theory is defined by constructing an object $E\c_{\T^2}$ in the algebraic model category $d\A(\T^2)$, which by Greenlees and Shipley \cite{john:torusship} is Quillen-equivalent to rational $\T^2$-spectra. This result is a generalisation to the $2$-torus of the construction \cite{john:elliptic} for the circle $\T$. The object $E\c_{\T^2}$ is directly built using geometric inputs coming from the Cousin complex of the structure sheaf of $\c\times \c$.
\end{abstract}
\maketitle

\setcounter{tocdepth}{1}
\tableofcontents
\section{Introduction}
\subsection{Motivation}
    One of the first applications of the algebraic models for rational $\T$-spectra ($\T$ is the circle group) introduced by Greenlees \cite{john:mem} was the construction of an algebraic model for rational $\T$-equivariant elliptic cohomology \cite{john:elliptic}. This result was part of a long quest started by Grojnowski \cite{groj:delocalised} for a satisfactory and comprehensive construction of equivariant elliptic cohomology. Equivariant elliptic cohomology is already a deep subject that shares an interesting bond with physics (see for example the Stolz-Teichner program \cite{stolz}, or work of Berwick-Evans \cite{daniel:supersymmetric}). The original motivation of Grojnowski was to construct certain elliptic algebras, but the subject found numerous applications to representation theory (for $G$ connected compact Lie group), and for organizing moonshine phenomena (especially for $G$ finite). We refer to the introduction of \cite{daniel:geometric} and \cite{gepner:onequiv} for a more detailed account of the story of this subject. This quest peaked in recent years with the work of Lurie \cite{lurie:ellipticI}, \cite{lurie:ellipticII}, \cite{lurie:ellipticIII}, and Gepner and Meier \cite{gepner:onequiv} who defined an integral theory of equivariant elliptic cohomology for any compact Lie group, entirely in the setting of spectral algebraic geometry. A comparison theorem between the construction we present in this paper and the construction of Lurie, Gepner and Meier will be interesting to pursue and could potentially lead to explicit computations of the Lurie-Gepner-Meier theory in the rational case. This is beyond the scope of this paper, and even for the circle case unpublished work of Gepner and Greenlees shows it is a substantial task.
    
    Essential invariants of $G$-spaces are $G$-equivariant cohomology theories: equivariant $K$-theory, equivariant cobordism, Bredon and Borel cohomology just to name a few examples. For $G$ a compact Lie group, via stabilisation one can construct a category of $G$-spectra where every such cohomology theory $E_G^*(\_)$ is represented by a $G$-spectrum $E$, in the sense that for any based $G$-space $X$ we have $E_G^*(X)=[\Sigma^{\infty}X, E]^G_*$. The gain with the category of $G$-spectra is in the structure, in particular one can do homotopy theory in it. If we restrict our attention to rational $G$-equivariant cohomology theories to simplify the situation, they are represented by rational $G$-spectra: the localization of $G$-spectra at $H\Q$. 
    
    Algebraic models are a useful tool to study rational equivariant cohomology theories. The main idea is to define an abelian category $\A(G)$ for $G$ a compact Lie group and an homology functor
    \begin{equation}
    	\label{eq:homologyfunctor}
    	\pi_*^{\A}:\sp G_{\Q} \to \A(G)
    \end{equation}
    equipped with an Adams spectral sequence to compute maps in rational $G$-spectra, and therefore the values of the theory:
    \begin{equation}
    	\label{eq:ass}
    	\Ext^{*,*}_{\A}(\pi_*^\A(X),\pi_*^\A(Y)) \Longrightarrow [X,Y]^G_{*}.
    \end{equation}
    For certain groups the homology functor $\pi_*^{\A}$ can be lifted to a Quillen-equivalence 
    \begin{equation}
    	\label{eq:Quillen}
    	\sp G_\Q \simeq_Q d\A(G)
    \end{equation}
    between rational $G$-spectra and the category $d\A(G)$ of differential graded objects in $\A(G)$. When this happens algebraic models can also be used to build rational $G$-equivariant cohomology theories simply constructing objects in $d\A(G)$. This is exactly the method used in \cite{john:elliptic}: building an object $E\c_{\T}$ in $d\A(\T)$ and using the Quillen-equivalence \eqref{eq:Quillen} for the circle $G=\T$ to define a $\T$-equivariant elliptic cohomology theory. 
    
    Greenlees and Shipley have extended the Quillen-equivalence \eqref{eq:Quillen} to tori of any rank $\T^r$ \cite[Theorem 1.1]{john:torusship}. It is therefore natural to try to generalize the construction of \cite{john:elliptic} to higher dimensional tori. The first step of this project is to build an object $E\c_{\T^2}\in d\A(\T^2)$ representing $\T^2$-equivariant elliptic cohomology, which is precisely the intended purpose of this paper. The $\T^2$ case is somehow separated from the general $\T^r$ case. Namely the Adams spectral sequence \eqref{eq:ass} collapses at the second page for $\T^2$ resulting in a neat and explicit description of the values of the theory on spheres of complex representations \eqref{eq:valuetheory}. Even if a similar description is expected to be true for higher dimensional tori, the Adams spectral sequence is not expected to collapse at the second page, and a substantial study of it may be necessary. Moreover the construction for $\T^2$ is complicated enough to shed some light over compatibility constraints among connected subgroups of the same codimension, not visible in the circle case (like for example the use of completed coordinates needed for Lemma \ref{lem:ccmap}). At the same time the situation is still simple enough to allow explicit visualization of the objects and to avoid use of combinatorics and inductive arguments necessary for higher tori, that would complicate the comprehension of the main ideas of the construction.
    
    The point of this method is that the construction of an object in $d\A(\T^2)$ so closely corresponds to the algebra of functions over the algebraic variety $\X:=\c\times \c$, where $\c$ is our fixed elliptic curve. The contact point is the Cousin complex (as introduced by Grothendieck \cite[Proposition 2.3]{hartshorne:residues}) of the structure sheaf $\O_{\X}$. From the algebraic geometry side this Cousin complex computes the cohomology of coherent sheaves over $\X$, since it is a flabby resolution of $\O_{\X}$. While in $\A(\T^2)$ this Cousin complex clearly matches the terms of an injective resolution of our object $E\c_{\T^2}$, and therefore computes the values $E\c_{\T^2}(\_)$ via the Adams spectral sequence. As a consequence calculations of the theory $E\c_{\T^2}(\_)$ are directly reduced to the cohomology of sheaves of the algebraic variety $\X$. Fundamental examples are spheres of complex representations: given a complex $\T^2$-representation $V$ with no fixed points we compute the values $E\c_{\T^2}(S^V)$ over the one point compactification $S^V$, and check it corresponds to the cohomology of the coherent sheaf $\O(-D_V)$, for a certain divisor $D_V$ associated with $V$.
    
    Two main subjects can benefit from our construction of $E\c_{\T^2}$. From the algebraic models perspective, this is the first non-trivial explicit construction of an object in $d\A(G)$ for higher dimensional tori, that is directly built in the algebraic model and does not come from a spectrum through the homology functor $\pi_*^{\A}$. This is interesting since it is a first example of use of these algebraic models for higher tori as a building tool for new theories. Many steps of the construction can be replicated with different geometric inputs to potentially define new and interesting rational equivariant cohomology theories. From the elliptic cohomology perspective, on top of giving a definition in the rational $\T^2$-case closer to the actual geometry of $\c$, it can also enhance computations in the rational $\T$-equivariant case. For example proving a split condition between the construction of Greenlees $E\c_{\T}$ and our construction of $E\c_{\T^2}$ will help towards this direction. The idea is to view a $\T$-space $X$ as the quotient of a $\T^2$-space $Y$ by the action of the subgroup $\T\times \{1\}$. If we can compute $E\c_{\T^2}^*(Y)$ using the computation on spheres here presented, than using the splitting condition we will also know $E\c_{\T}^*(X)$.
    
    \subsection{The main result}
    The construction for the circle \cite[Theorem 1.1]{john:elliptic} works as follows. Greenlees starts from an elliptic curve $\c$ over a $\Q$-algebra with a choice of coordinate $t_e\in \O_{\c,e}$ vanishing to the first order at $e$, and with poles only at points of finite order of $\c$. From this he constructs uniformizers $t_n$ for the points of exact order $n$: $\c\gen n$, and also changes the topology so that $\c\gen n$ are the irreducible closed subsets. The ring of meromorphic functions in this new topology, together with the local rings at the various $\c\gen n$ are then used as geometric input to build the object $E\c_{\T}\in d\A(\T)$, where the $t_n$ are used to give the actions of the appropriate rings. After building $E\c_{\T}$, using the Adams spectral sequence he computes the values of the associated cohomology theory on spheres of complex representations $E\c_{\T}^*(S^V)$, showing it has all the right properties to be called rational $\T$-equivariant elliptic cohomology.
    
    For higher dimensional tori we also have an algebraic model $\A(\T^r)$ described in \cite{john:torusI} and \cite{john:torusII}, and the Quillen-equivalence \eqref{eq:Quillen} \cite[Theorem 1.1]{john:torusship}. Therefore we can use the same technique to construct a rational $\T^2$-equivariant elliptic cohomology theory: build an object $E\c_{\T^2}\in d\A(\T^2)$ and with the Adams spectral sequence compute its values on spheres of complex representations.
    
    To be more precise exactly as in \cite{john:elliptic} we start from the data of an elliptic curve $\c$ over the complex numbers and a coordinate $t_e\in \O_{\c,e}$ in the local ring at the identity of $\c$, vanishing to first order at $e$. From this we build our object $E\c_{\T^2}$: this is the main theorem of this article. The construction of the object can be found in Section \ref{sec:4} while the computation on spheres is Theorem \ref{thm:computationspheres}.   
    \begin{theorem}
    \label{thm:fundconstr}
    For every elliptic curve $\c$ over $\C$ and coordinate $t_e\in \O_{\c,e}$, there exists a formal differential graded object $E\c_{\T^2}\in d\A(\T^2)$ whose associated rational $\T^2$-equivariant cohomology theory $E\c_{\T^2}^*(\_)$ is 2-periodic. The value on the one point compactification $S^V$ for a complex $\T^2$-representation $V$ with no fixed points is given in terms of the sheaf cohomology of a line bundle $\odv$ over $\X=\c\times \c$:
    \begin{equation}
    	\label{eq:valuetheory}
    	E\c_{\T^2}^n(S^V)\cong 
    	\begin{cases*}
    		H^0(\X, \odv)\oplus H^2(\X, \odv) & $n$ even \\
    		H^1(\X, \odv) & $n$ odd.
    	\end{cases*}
    \end{equation}
	\end{theorem}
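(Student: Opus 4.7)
The plan is to mimic the circle construction of \cite{john:elliptic} but with $\X=\c\times\c$ replacing $\c$, using the Cousin complex of $\O_\X$ as the structural backbone. The abelian category $\A(\T^2)$ has strata indexed by conjugacy classes of connected subgroups of $\T^2$, and these correspond, up to finite-order twists, to the codimension stratification of $\X$: codimension $0$ is the generic point, codimension $1$ comes from one-dimensional connected subgroups $H\subset \T^2$ (whose torsion translates cut out curves in $\X$), and codimension $2$ comes from finite subgroups (torsion points of $\X$).

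First I would turn $t_e$ into a coherent family of uniformizers along each stratum. Translation of $t_e$ yields uniformizers at every torsion point, and suitable tensor products produce local parameters along each one-dimensional cotorsion subvariety. To make the structure maps of $d\A(\T^2)$ commute, one must pass to completions and check compatibility of the completed parameters where two codimension-$1$ subvarieties meet at a torsion point; this is the role of Lemma \ref{lem:ccmap}. Simultaneously I refine the Zariski topology on $\X$ so that the cotorsion curves and their torsion points become the irreducible closed subsets. I then assemble $E\c_{\T^2}$: the generic stratum carries the ring of meromorphic functions on $\X$, each codimension-$1$ stratum carries local cohomology along its cotorsion curve, and each codimension-$2$ stratum carries local cohomology at the associated torsion point. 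The connecting maps come from residue maps in the Cousin complex, which in turn force formality because no homotopical choices intervene in the differential.

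For the computation on $S^V$ I would run the Adams spectral sequence \eqref{eq:ass} with $X=S^V$ and $Y=E\c_{\T^2}$. Since $\X$ is two-dimensional, its Cousin complex is concentrated in degrees $0,1,2$, so $E\c_{\T^2}$ admits an explicit injective resolution of length $2$ in $\A(\T^2)$; this forces collapse at $E_2$ with only three potentially nonzero rows. The input $\pi_*^\A(S^V)$ twists this resolution by a divisor determined by the weights of $V$, and the key point is to recognise the twisted complex as the Cousin resolution of the line bundle $\odv$ on $\X$, so that \eqref{eq:valuetheory} follows from the standard identification of sheaf cohomology with cohomology of the Cousin resolution. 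The main obstacle will be exactly this identification: one has to verify that the vanishing orders of the uniformizers $t_n$ along each cotorsion curve read off correctly the integer weights determined by $V$, so that the combinatorially defined divisor equals $D_V$ on the nose across all irreducible components at once. Two-periodicity then follows from $S^{V\oplus\C}\simeq \Sigma^2 S^V$ together with the fact that adding a trivial summand leaves $\odv$ unchanged.
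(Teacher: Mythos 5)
Your outline does follow the paper's route (TP-topology, Cousin complex of $\tp\ox$, completed coordinates feeding Lemma \ref{lem:ccmap}, injectives from local cohomology, Adams spectral sequence identifying the $E_2$-page with $H^*(\X,\odv)$), but it has one genuine gap: you never address exactness of the sequence of injectives in $\A(\T^2)$. The objects you assemble are not literally the terms of the Cousin complex; at the bottom (trivial-subgroup) level they are $\eginv\of\otimes\ov\K$, $\bigoplus_{i\geq 1}\ehinv i\of\tens i T_i$ and $N$, i.e.\ the Cousin terms base-changed along Euler-class localizations, and the differentials must first be extended over negative powers of the Euler classes (Lemmas \ref{lem:phi0} and \ref{lem:phi1}, which is where the completed coordinates and Lemma \ref{lem:ccmap} really enter) and then shown to still give an exact sequence. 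Exactness of the Cousin complex does not give this for free: one must show that an arbitrary element of $\bigoplus_{i}\ehinv i\Q[\xa,\xb]\tens i\ov{\K/\O_{D_{i,n_i}}}$ can be moved, modulo the image of $\phi_0$, into the image of the un-localized complex, and the paper's proof of this (Lemma \ref{lem:reduction}) is a genuine moving argument that uses Abel's theorem to manufacture representatives whose poles are confined to two directions (Lemmas \ref{lem:coordpoints} and \ref{lemma:julian}). Without this exactness (Lemma \ref{lem:exactness}) your complex is not an injective resolution of $H_*(E\c_{\T^2})$, the formality argument does not go through as you state it (``no homotopical choices intervene'' is not a proof; the paper's formality proof uses surjectivity of $\phi_1$ and exactness at $\inj_1$), and the identification of the $E_2$-page with $H^*(\X,\odv)$ collapses with it.

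Two smaller corrections. Collapse of the Adams spectral sequence at $E_2$ is not forced by having only three rows: with $s=0,1,2$ a differential $d_2\colon E_2^{0,t}\to E_2^{2,t+1}$ is still allowed; what kills it is the checkerboard of zeroes coming from the $2$-periodicity of the $\Ext$-groups in Theorem \ref{thm:ext}, so you must invoke that parity argument explicitly. Likewise $2$-periodicity of the theory should be read off from the construction itself --- all the modules $\ov\K$, $\ov{\K/\odij}$, $\ov{\h 2F}$ are $2$-periodic, hence so is $H_*(E\c_{\T^2})$ and the represented theory --- rather than from $S^{V\oplus\C}\simeq\Sigma^2S^V$: the representation $V\oplus\C$ has nonzero fixed points, so it lies outside the range in which your formula for the values has been established, and that argument would in any case only concern representation spheres rather than the theory as a whole.
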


	The associated divisor $D_V$ is defined as follows. First our elliptic curve $\c$ defines a functor $\XF$ from compact abelian Lie groups to complex manifolds. If $H$ is a compact abelian Lie group and $\c$ our fixed elliptic curve, define
		\[
		\XF(H):=\Hom_{\text{Ab}}(H^*,\c).
		\]
	Where we are considering group homomorphisms, and $H^*:= \Hom_{\text{Lie}}(H, \T)$ is the character group of $H$. Note this is an exact functor inducing an embedding $\XF(K)\hookrightarrow \XF(H)$ for every embedding $K\hookrightarrow H$.
	The fundamental algebraic surface we are working on is $\X:=\XF(\T^2)\cong \c\times \c$. Note $\XF(H)$ is a subvariety of $\X$ of the same dimension of $H$. For every non-trivial character $z$ of $\T^2$ consider the codimension $1$ subgroup $\Ker(z)$, then $\XF(\Ker(z))$ is a codimension $1$ subvariety of $\X$ that is the associated divisor of $z$. If $V=\sum_z\alpha_zz$ define
	\[
	D_V:= \sum_z \alpha_z\XF(\Ker(z)),
	\]
	where in both sums $z$ runs over all the non-trivial characters of $\T^2$, and $\alpha_z$ is a non-negative integer.
    
    To be more precise our construction of $E\c_{\T^2}\in d\A(\T^2)$ proceeds as follows. We will build an exact sequence of three injective objects in $\A(\T^2)$:
    \begin{equation}
    	\label{eq:intrchain}
    	\inj_0 \xrightarrow{\phi_0} \inj_1 \xrightarrow{\phi_1} \inj_2\to 0.
    \end{equation}
	In $d\A(\T^2)$ the map $\phi_0$ factors through the fibre of $\phi_1$, and defines a map $\tilde{\phi_0}$ whose fibre we can define to be our object $E\c_{\T^2}$. We will prove from this construction that the object $E\c_{\T^2}$ is formal: namely is quasi-isomorphic to its homology $H_*(E\c_{\T^2})$. Moreover the exact sequence we started with \eqref{eq:intrchain} is an injective resolution in $\A(\T^2)$ of the homology $H_*(E\c_{\T^2})$. In this way in using the Adams Spectral sequence to compute the values of the theory, we will only need the sequence \eqref{eq:intrchain}, so we can build it with the right geometric inputs to force the values on spheres of complex representations stated in \eqref{eq:valuetheory}.
	
	The sequence \eqref{eq:intrchain} is built directly from geometric inputs coming from the elliptic curve. The bridge between the category $\A(\T^2)$ and the variety $\X$ is the functor $\XF$ on subgroups of $\T^2$. On $\X$ we consider a different topology than the Zariski one that we call the torsion point topology $\tp {\X}$: the generating closed subsets of this new topology are the irreducible components of the subvarieties $\XF(H)$ where $H$ runs over the closed subgroups of $\T^2$. With this new topology we associate to every closed subgroup $H$ a closed subset $\bvar H$. The irreducible parts of $\bvar H$ have the same dimension of $H$, if $H\neq K$ have the same dimension then $\bvar H$ and $\bvar K$ are disjoint, and every irreducible closed subset in $\tp {\X}$ is in one of the $\bvar H$. The TP-topology basically copies the poset structure of the closed subgroups of $\T^2$ with a poset of closed subsets on the variety $\X$, and forgets the rest. An essential aspect of this change of topology is that pushforward of Zariski sheaves have the same sheaf cohomology in the TP-topology.
	
	Taking advantage of these two aspects of the TP-topology, we can consider the sheaf Cousin complex $CC^*(\tp \ox)$ \cite[Proposition 2.3]{hartshorne:residues} of the pushforward of the Zariski structure sheaf: $\tp {\ox}$. This is a flabby resolution of $\tp \ox$, whose $n$-th term decomposes as a direct sum over the irreducible closed subsets $C$ of codimension $n$ of constant sheaves with value a local cohomology module $\h nC(\tp \ox)$. In complete analogy the $n$-th term of the sequence \eqref{eq:intrchain} we want to build will encode information of the cohomology theory at the subgroups of codimension $n$. The correspondence $H\to \bvar H$ will come in handy at this point.
	Recall from \cite{john:torusI} that objects in $\A(\T^2)$ are sheaves of modules over the space of closed subgroups of $\T^2$. The object $\inj_n$ will be a direct sum over the closed subgroups $H$ of codimension $n$ of $\T^2$ of constant sheaves with value the local cohomology $\h n{\bvar H}(\tp \ox)$ coming from the Cousin complex. The differentials $\phi_0$, and $\phi_1$ will also come from the differentials in the Cousin complex.
    
    To compute the value of the theory on a sphere of complex representation $S^V$, when $V$ does not have fixed points, we use the Adams Spectral sequence \eqref{eq:ass} \cite[Theorem 1.1]{john:torusI}. Using the injective resolution \eqref{eq:intrchain} computations are directly reduced to the Cousin complex $CC^*(\tp \ox)$ twisted by the coherent sheaf $\odv$: this turns out to be a flabby resolution of $\odv$ giving its cohomology as a result.
    From this construction it is clear that there is a deep connection between injective resolutions of objects in the algebraic category $\A(\T^r)$ and the theory of Cousin complexes. The filtration by codimension of subgroups corresponds to the filtration by codimension of closed subsets in the Cousin complex, and also the decomposition in constant sheaves is common to both and corresponds term by term. This requires further investigation, and suggests that the same method can be used to build other interesting cohomology theories, possibly changing the geometric inputs.

\subsection{Outline of the paper} 
The paper is organized as follows. In Section \ref{sec:1} we introduce the fundamental functor $\XF$ (Definition \ref{def:varfunctor}) going from subgroups of $\T^2$ to algebraic subvarieties of $\X$, we build the fundamental correspondence $H \to \bvar H$ (Definition \ref{def:bvar}) and we prove its properties. The most important property is Lemma \ref{lem:fin} which identifies these subvarieties as the right geometric counterpart to the various subgroups. In Section \ref{sec:2} we define the TP-topology on $\X$ (Definition \ref{def:tp}). The main result of this section is Corollary \ref{cor:coh} stating that the pushforward $\phi_*$
 for the change of topology map is exact on quasi-coherent sheaves. In Definition \ref{def:coord} we construct a set of completed coordinate functions for the varieties $\bvar H$, needed to fit the local cohomology modules into objects of $\A(\T^2)$. 
 In Section \ref{sec:3} we introduce the main algebraic geometry tool we will use: the sheaf Cousin complex \eqref{eq:cc1}. We prove that the Cousin complex of $\tp \ox$ is a flabby resolution of $\tp \ox$ (Corollary \ref{cor:exactness}): the essential step is Lemma \ref{lem:changesupp} that proves that the support of a sheaf in both topologies is the same. 
 Section \ref{sec:4} is the core of the paper: we construct the object $E\c_{\T^2}$ in $d\A(\T^2)$ (Definition \ref{def:mainobj}). First we detail the construction of $E\c_{\T^2}$ starting from \eqref{eq:intrchain} in \ref{subsec:object}, while the rest of the section is devoted in building \eqref{eq:intrchain}. The three injective objects $\inj_0$, $\inj_1$, $\inj_2$ are respectively built in \ref{sub:inj0}, \ref{sub:inj1}, \ref{sub:inj2}, while the two maps $\phi_0$ and $\phi_1$ are defined in \ref{sub:phi0}, \ref{sub:phi1}. The final part \ref{subsec:exact} deals with exactness of \eqref{eq:intrchain}: Lemma \ref{lem:exactness}. 
 The final section of the paper is dedicated to computing the values of the theory on spheres of complex representations. Here the essential Theorem is Theorem \ref{thm:ext} that computes the second page of the Adams spectral sequence to be the cohomology of the sheaf $\odv$, forging the direct link of the theory with the geometry of $\c$, we conclude the section extending the computations for virtual negative complex representations \eqref{eq:negativevalues}. We end with an Appendix on algebraic models recalling from the literature the definition of the category $\A(G)$, and especially injective objects in it.

\subsection{Notation and Conventions} 
The most important convention of the paper is that everything is rationalized without comment. In particular all spectra are meant localized at $H\Q$ and all the homology and cohomology is meant with $\Q$ coefficients. Tensor products $\otimes$ are meant over $\Q$, or over the graded ring with only $\Q$ in degree zero and zero elsewhere. By subgroup of a compact Lie group we always mean closed subgroup. With the symbol $\T^r$ we mean the torus of rank $r$: compact connected abelian Lie group of dimension $r$. We denote $G=\T^2$ the $2$-torus in all the paper (except in the Appendix \ref{appendix:algmodel} where the theory is presented for $G$ a general torus), since is our fixed group of equivariance. The collection of connected closed codimension $1$ subgroups of $G$ is $\{H_i\}_{i\geq 1}$ indexed with $i\geq 1$, and with $H_1=1\times \T$ and $H_2=\T\times 1$ being the two privileged subgroups. We denote $H_i^j$ the subgroup with $j$ connected components and identity component $H_i$: we will refer to the subgroups with identity component $H_i$ as being along the $i$-direction. We denote with $F$ a generic finite subgroup of $G$ and with $H$ and $K$ generic closed subgroups of $G$ of any dimension. Given a module $M$ we will denote $\overline M$ the 2-periodic version of $M$: it is a graded module with $M$ in each even degree and zero in odd degrees. We denote elements in direct sums and products in the following way: $x=\{x_i\}_{i}\in \bigoplus_{i\geq 1}M_i$: this identifies the element $x$ in the direct sum that has $i$th component $x_i\in M_i$.

We will freely use the standard notation of schemes from Algebraic Geometry. We denote $\K(\X)$ the ring of meromorphic function for the algebraic variety $\X$, and $\eta(C)$ the generic point of a closed set $C$. We denote $\c$ our fixed elliptic curve over $\C$, $e$ is the identity of the elliptic curve and for a positive integer $n$: $\c[n]$ is the subgroup of elements of $n$-torsion, while $\c\gen n$ is the subset of elements of exact order $n$. We will use $P$ to denote a point of $\c$ of finite order. 

We will freely use the standard notation for algebraic models , and we recall it in the appendix \ref{appendix:algmodel}. In particular $\A(G)$ is an abelian category with graded objects and no differentials, while $d\A(G)$ is the category of objects of $\A(G)$ with differentials. Cohomology is unreduced unless indicated to the contrary with a tilde, so that $\hbg H=\tilde H^*(BG/H_+)$ is the unreduced cohomology ring. To ease the notation sometimes we will omit the base ring we are taking the tensor product over and denote it with an index: $\tens i$. This in turn means that we are considering the tensor product over the ring $\oefh i$ or its $F$th component $\hbg {H_i^{n_i}}$. We will make extensive use of the isomorphisms $\hbg {H_i^j}\cong \Q[c_{ij}]$ \eqref{eq:cij} and $\hbg F\cong \Q[\xa,\xb]$ \eqref{eq:xaxb}.

\subsection{Acknowledgements} This work is part of my PhD thesis at the University of Warwick under the supervision of John Greenlees. I am extremely grateful to John for his guidance, comments and ideas on this project. I would like in particular to thank Julian Lawrence Demeio for many useful conversations, suggestions and ideas especially on the Algebraic Geometry side. I am also grateful to Jordan Williamson and Emanuele Dotto for their helpful comments on the preliminary version of this paper.

\section{The correspondence subgroups-subvarieties}
\label{sec:1}
The goal of this section is to specify a correspondence between subgroups $H$ of $G=\T^2$ and certain varieties $\bvar H$. We have fixed an elliptic curve $\c$ over the complex numbers which defines a functor $\XF$ from compact abelian Lie groups to complex manifolds.
\begin{definition}
	\label{def:varfunctor}
	If $H$ is a compact abelian Lie group and $\c$ our fixed elliptic curve, define
	\begin{equation}
		\label{eq:functor}
		\XF(H):=\Hom_{\text{Ab}}(H^*,\c).
	\end{equation}
	Where we are considering group homomorphisms, and $H^*:= \Hom_{\text{Lie}}(H, \T)$ is the character group of $H$.
\end{definition}

Let $\X:=\XF(G)$ be the complex abelian surface defined by the 2-torus. The functor $\XF$ is exact and induces an embedding $\var H \hookrightarrow \X$ for every subgroup $H$ of $G$. Moreover $\var H$ has the same dimension as $H$ and is a subgroup of $\X$. We will only be interested in the functor \eqref{eq:functor} on subgroups of $G$, and therefore all the varieties $\XF(H)$ will be subvarieties of $\X$.

\begin{definition}
	\label{def:bvar}
	For every subgroup $H$ of $G$ define
	\begin{equation}
		\label{eq:xvar}
		\bvar H:=\var H \setminus \bigcup_K \var K 
	\end{equation}
	where the union is over all the proper subgroups $K$ of $H$ of finite index in $H$.
\end{definition}

Since the union in \eqref{eq:xvar} is finite, $\bvar H$ is a subvariety of $\X$, that for $G$ itself coincide with the all surface $\X$.
\begin{lemma}
	\label{lem:properties}
	For $H$ and $K$ subgroups of $G$ the following properties are satisfied:
	\begin{itemize}
		\item $\var {H\times K}=\var H \times \var K$.
		\item $\var {H\cap K}=\var H \cap \var K$
	\end{itemize}
\end{lemma}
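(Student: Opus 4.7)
The plan is to unfold definitions. The universal property of the direct product of abelian Lie groups gives $(H\times K)^* = \Hom_{\text{Lie}}(H\times K,\T) \cong H^*\oplus K^*$, and the functor $\Hom_{\text{Ab}}(-,\c)$ sends finite direct sums to direct products. Therefore
\begin{equation*}
    \XF(H\times K) = \Hom_{\text{Ab}}(H^*\oplus K^*,\c) \cong \Hom_{\text{Ab}}(H^*,\c)\times \Hom_{\text{Ab}}(K^*,\c) = \XF(H)\times \XF(K).
\end{equation*}
This is a purely formal computation with no subtleties, and it gives an identification as subvarieties of $\XF(G\times G) = \X\times \X$.

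\textbf{Second property.} Here the plan is to invoke the exactness of $\XF$ noted after Definition \ref{def:varfunctor}, applied to a natural short exact sequence of compact abelian Lie groups. Concretely I would consider
\begin{equation*}
    0 \to H\cap K \xrightarrow{\Delta} H\times K \xrightarrow{\sigma} H+K \to 0,
\end{equation*}
where $\Delta(g) = (g,g)$ packages the two inclusions $H\cap K \hookrightarrow H,K$ and $\sigma(h,k) = h-k$ is computed inside $H+K \subseteq G$. Exactness is immediate: $\Delta$ is injective, $\sigma$ is surjective, and $(h,k)\in \ker \sigma$ forces $h = k$ in $G$, hence $h\in H\cap K$, so $\ker \sigma = \im \Delta$. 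Applying $\XF$ and the first property yields an exact sequence
\begin{equation*}
    0\to \XF(H\cap K) \xrightarrow{\XF(\Delta)} \XF(H)\times \XF(K) \xrightarrow{\XF(\sigma)} \XF(H+K) \to 0.
\end{equation*}

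By functoriality, $\XF(\Delta)$ is the diagonal coming from the embeddings $\XF(H\cap K) \hookrightarrow \XF(H), \XF(K)$, while $\XF(\sigma)$ is the difference of the two inclusions $\XF(H), \XF(K) \hookrightarrow \XF(H+K)$. Hence $\ker \XF(\sigma)$ consists of pairs $(x,y)$ with equal image in $\XF(H+K)$, i.e.\ pairs $(z,z)$ with $z\in \XF(H)\cap \XF(K) \subseteq \XF(H+K)$. Composing with the embedding $\XF(H+K)\hookrightarrow \X$, this identifies $\XF(H\cap K)$ with $\XF(H)\cap \XF(K)$ as subsets of $\X$.

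I don't expect any real obstacle. The main care needed is in matching the subvariety structures: the inclusion $\XF(H\cap K)\subseteq \XF(H)\cap \XF(K)$ inside $\X$ is automatic from functoriality applied to $H\cap K \hookrightarrow H \hookrightarrow G$ and $H\cap K \hookrightarrow K \hookrightarrow G$, and the reverse inclusion is supplied by the exact sequence argument above. The only genuine choice one has to make is the sign in the definition of $\sigma$, so that $\ker \sigma$ is the diagonal rather than the antidiagonal copy of $H\cap K$.
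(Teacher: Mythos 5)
Your proof is correct, and for the second (and only nontrivial) property it takes a slightly different route than the paper, though both hinge on the same key fact that $\XF$ is exact. The paper applies $\XF$ to the diagram $H\cap K \hookrightarrow G \twoheadrightarrow G/(H\cap K) \hookrightarrow G/H\times G/K$, so that $\XF(H\cap K)$ is exhibited as the kernel of $\XF(p_0\times p_1)$ and one only has to observe that every point of $\XF(H)\cap\XF(K)$ lies in that kernel; you instead use the Mayer--Vietoris-type sequence $0\to H\cap K\to H\times K\to H+K\to 0$ and identify $\XF(H)\cap\XF(K)$ with the kernel of the difference map $\XF(H)\times\XF(K)\to\XF(H+K)$. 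Your version is arguably more symmetric and makes the diagonal identification explicit, at the cost of two small points you should make explicit: (i) $H+K$ is the continuous image of the compact set $H\times K$ in $G$, hence a closed subgroup and so a compact abelian Lie group, which is needed before you may apply $\XF$ and its exactness to your sequence; and (ii) the passage from ``equal image in $\XF(H+K)$'' to ``equal point of $\X$'' uses that $\XF(H+K)\hookrightarrow\X$ is injective (again exactness of $\XF$ applied to the inclusion $H+K\hookrightarrow G$). The paper's version avoids introducing $H+K$ altogether but needs the preservation of the injectivity of $G/(H\cap K)\to G/H\times G/K$ under $\XF$; the two arguments are of essentially equal length and either would serve. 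Your treatment of the first property, via $(H\times K)^*\cong H^*\oplus K^*$, is a direct unfolding of the definition and is equivalent to the paper's application of $\XF$ to $H\to H\times K\to K$.
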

\begin{proof}
	The first property follows immediately applying the functor $\XF$ to the exact sequence
	\begin{equation*}
		\begin{tikzcd}
			H  \arrow[r, tail] & H\times K \arrow[r, twoheadrightarrow] & K.
		\end{tikzcd}
	\end{equation*}
	
	For the second one we only need to prove the containment $\var H \cap \var K\subseteq \var {H\cap K}$, since the other containment is immediate from $\XF$ being a functor. Apply the exact functor $\XF$ to the commutative diagram:
	\begin{equation*}
		\begin{tikzcd}
			H\cap K \arrow[r, tail] & G \arrow[r, twoheadrightarrow] \arrow[dr, "p_0\times p_1"'] & \quotient G{H\cap K} \arrow[d, tail]\\
			& & \quotient GH \times \quotient GK.
		\end{tikzcd}
	\end{equation*}
	In doing so the right vertical maps remains injective, and therefore the kernel of $\var {p_0\times p_1}$ is $\var {H\cap K}$. Now it is enough to notice that every element in $\var H \cap \var K$ is sent to zero by $\var {p_0\times p_1}$. 
\end{proof}
\begin{rem}
	Applying this Lemma to $H=1\times \T$ and $K=\T\times 1$, we obtain $\X=\XF(G)= \XF(H)\times \XF(K)= \c \times \c$
\end{rem}

\subsection{The codimension 1 case}
Let $\{H_i\}_{i\geq 1}$ be the collection of connected codimension $1$ subgroups of $G=\T^2$, with $H_1=1 \times \T$ and $H_2=\T\times 1$. Each one of the $H_i$ can be written as the kernel of a nonzero character $z_i:G \to \T$ of $G$:
\begin{equation}
	\label{eq:char}
	\begin{tikzcd}
	H_i \arrow[r,tail] & G \arrow[r,twoheadrightarrow, "z_i"] & \T.
	\end{tikzcd}
\end{equation}

Moreover we may choose  $z_i=z_1^{\lambda_i}z_2^{\mu_i}$ for a couple of coprime integers $(\lambda_i, \mu_i)$ not both zero and with $\mu_i\geq 0$.
Applying the functor $\XF$ to \eqref{eq:char}, the  subvariety $\var {H_i}$ can be described in the same way as the kernel of the projection $\pi_i:=\var {z_i}$:
\begin{equation}
	\label{eq:pii}
	\begin{tikzcd}
		\var {H_i} \arrow[r,tail]& \X \arrow[r,twoheadrightarrow, "\pi_i"] & \c.
	\end{tikzcd}
\end{equation}
Where the relation $\pi_i=\lambda_i\pi_1+\mu_i\pi_2$ holds now by the group law of the elliptic curve.
\begin{definition}
	\label{def:zij}
	For every $i\geq 1$ and $j\in \Z\setminus \{0\}$ we define the character $z_i^j$ of $G$ post-composing $z_i$ with the $j$-th power map of $\T$. We also define $\pi_i^j:=\var {z_i^j}$, note that this map is obtained post-composing $\pi_i$ with the $j$-th power map in $\c$.
\end{definition}

\begin{definition}
	For every direction $i\geq 1$ and every $j\geq 1$ define the $(i,j)$-divisor:
	\begin{equation}
		\label{eq:dij}
		D_{ij}:=\bvar {H_i^j}.
	\end{equation}
	Where $H_i^j$ is the subgroup of $G$ with $j$ connected components and identity component $H_i$.
\end{definition}
\begin{definition}
	For $P\in \c$ a point of finite order define:
	\[
	D_{i,P}:=\pi_i^{-1}(P)
	\]
\end{definition}
\begin{rem}
    \label{rem:decomposition}
	Notice that $\var {H_i^j}=\pi_i^{-1}(\c[j])$, while $\dij=\pi_i^{-1}(\c\gen j)$. Therefore we have the decompositions:
	\begin{equation}
		\label{eq:decompositions}
		\begin{split}
			\bvar {H_i^j}= \dij &= \coprod_{P\in \c\gen j}D_{i,P}\\
			\var {H_i^n} &= \coprod_{j \mid n } \dij.
		\end{split}
	\end{equation}
\end{rem}
\begin{rem}
	From now on we will always refer to $\pi_i$ as the projection along the $i$-direction. All the varieties $\var {H_i^j}$, $\dij$ and $\dip$ will all be referred as ``along the $i$-direction''. We denote $D_i=D_{i,1}=D_{i,e}=\var {H_i}$. Note from \eqref{eq:decompositions} that the subvarieties along the $i$-direction $D_{ij}$ are all parallel, disjoint and made up of disjoint pieces $D_{i,P}$ isomorphic to a single copy of $\c$.
\end{rem}
\subsection{The codimension 2 case} If $F$ is a finite subgroup of $G$, then $\var F$ is a finite collection of closed points of $\X$. The subset $\bvar F\subset \var F$ satisfies some desirable properties. 
\begin{lemma}
	\label{lem:disj}
	If $F\neq F'$ are finite subgroups of $G$, then $\bvar F\cap \bvar {F'}=\emptyset$
\end{lemma}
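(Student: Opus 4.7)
The plan is to argue by contradiction, using the intersection property of the functor $\XF$ established in Lemma \ref{lem:properties}. Suppose $x \in \bvar F \cap \bvar{F'}$. Since $x$ lies in both $\var F$ and $\var{F'}$, Lemma \ref{lem:properties} gives
\[
x \in \var F \cap \var{F'} = \var{F \cap F'}.
\]
The goal is then to show that $F \cap F'$ is a proper finite-index subgroup of at least one of $F$ or $F'$, which would force $x$ to belong to the union subtracted in Definition \ref{def:bvar}, contradicting either $x \in \bvar F$ or $x \in \bvar{F'}$.

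For this last step, note that since $F \neq F'$, we cannot have simultaneously $F \subseteq F'$ and $F' \subseteq F$. Without loss of generality assume $F \not\subseteq F'$; this immediately implies $F \cap F' \subsetneq F$. Because $F$ itself is finite, every subgroup of $F$ is automatically of finite index in $F$, so $F \cap F'$ is a proper finite-index subgroup of $F$. Hence
\[
x \in \var{F \cap F'} \subseteq \bigcup_{K \subsetneq F,\ [F:K] < \infty} \var K,
\]
which contradicts $x \in \bvar F$ by the very definition \eqref{eq:xvar}.

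The argument is essentially bookkeeping — there is no real obstacle, as Lemma \ref{lem:properties} does the geometric work and the finiteness of $F$ and $F'$ trivializes the index condition appearing in Definition \ref{def:bvar}. The only subtle point is remembering to split into the two symmetric cases $F \not\subseteq F'$ and $F' \not\subseteq F$, which together are exhaustive when $F \neq F'$.
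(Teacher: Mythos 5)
Your proof is correct and follows the same route as the paper: both reduce to $\var F \cap \var{F'}=\var{F\cap F'}$ via Lemma \ref{lem:properties} and then note that (WLOG) $F\cap F'$ is a proper subgroup of $F$, so the point is excluded from $\bvar F$ by Definition \ref{def:bvar}. Your extra remarks about finite index being automatic for subgroups of a finite group just make explicit what the paper leaves implicit.
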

\begin{proof}
	Suppose $Q\in \bvar F \cap \bvar {F'}$, then $Q\in \var F \cap \var {F'}=\var {F\cap F'}$. Without loss of generality $F\cap F'$ is a proper subgroup of $F$, and therefore $Q\notin \bvar F$. 
\end{proof}
\begin{lemma}
	\label{lem:fin}
	Given $F<G$ finite, then for every direction $i\geq 1$ there exists one and only one index $n_i=n_i(F)\geq 1$ such that
	\[
	\div i\cap \bvar F\neq \emptyset,
	\]
	precisely the only index $n_i$ such that $H_i^{n_i}$ is the subgroup generated by $H_i$ and $F$ (Definition \ref{def:ni}). Moreover
	\begin{equation}
		\label{eq:intersect}
		\bvar F=\bigcap_{i\geq 1} \div i
	\end{equation}
\end{lemma}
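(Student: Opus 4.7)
The plan is to handle the two claims in sequence: first identify and check uniqueness of $n_i$ along each direction, and then establish the intersection formula \eqref{eq:intersect}. For the first claim I would set $n_i$ to be the integer satisfying $H_i^{n_i} = \langle H_i, F \rangle$; this is well-defined because $\pi_i(F)$ is a finite, hence cyclic, subgroup of $G/H_i \cong \T$. Since $F \subseteq H_i^{n_i}$, functoriality of $\XF$ together with Remark \ref{rem:decomposition} give $\var F \subseteq \var{H_i^{n_i}} = \coprod_{k \mid n_i} D_{i,k}$. If $Q \in \bvar F \cap D_{i,k}$ for some proper divisor $k < n_i$, Lemma \ref{lem:properties} places $Q$ in $\var{F \cap H_i^k}$; but $k < n_i$ forces $F \not\subseteq H_i^k$, so $F \cap H_i^k$ is a proper subgroup of $F$, contradicting $Q \in \bvar F$. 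Combined with the pairwise disjointness of the $D_{i,k}$, this yields both uniqueness of the index and the inclusion $\bvar F \subseteq \div i$.

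The algebraic heart of \eqref{eq:intersect} is the identity $\bigcap_{i \geq 1} H_i^{n_i} = F$. The inclusion $F \subseteq \bigcap_i H_i^{n_i}$ is immediate. For the reverse, take $g \notin F$: since $G/F$ is divisible, there is a character $\chi \colon G \to \T$ with $\chi|_F = 0$ and $\chi(g) \neq 0$. Writing $\chi = k \cdot z_i$ with $z_i$ a primitive character (corresponding to some direction $i$) and $k \geq 1$, the condition $\chi|_F = 0$ gives $z_i(F) \subseteq \T[k]$, so $n_i \mid k$. But if $g \in H_i^{n_i}$ then $z_i(g) \in \T[n_i] \subseteq \T[k]$, so $\chi(g) = k z_i(g) = 0$, a contradiction.

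With this identity, the inclusion $\bvar F \subseteq \bigcap_i \div i$ is immediate from the first paragraph. For the reverse take $Q \in \bigcap_i \div i \subseteq \bigcap_i \var{H_i^{n_i}}$; since $H_1^{n_1} \cap H_2^{n_2}$ is already finite the decreasing intersection $\bigcap_{i=1}^N H_i^{n_i}$ stabilises at some finite $N$, so iterating Lemma \ref{lem:properties} gives $\bigcap_i \var{H_i^{n_i}} = \var{\bigcap_i H_i^{n_i}} = \var F$ and thus $Q \in \var F$. Suppose for contradiction $Q \in \var K$ for some proper $K \subsetneq F$ and define $m_i$ for $K$ analogously. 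The condition $\var K \cap \div i \neq \emptyset$ together with the decomposition of $\var{H_i^{m_i}}$ forces $n_i \mid m_i$, while $K \subsetneq F$ yields $m_i \mid n_i$, so $m_i = n_i$ for every $i$. Applying the algebraic identity to both $F$ and $K$ then gives $F = \bigcap_i H_i^{n_i} = \bigcap_i H_i^{m_i} = K$, a contradiction. Hence $Q \in \bvar F$.

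The main obstacle is the algebraic identity $\bigcap_i H_i^{n_i} = F$: it relies on the ability to extend characters from $F$ to $G$ and to decompose them along primitive characters of $G^* = \Z^2$, each of which picks out exactly one direction $i$. Once this identity is in hand, the rest of the proof reduces to bookkeeping with Lemma \ref{lem:properties} and the disjointness of the $D_{i,k}$ from Remark \ref{rem:decomposition}.
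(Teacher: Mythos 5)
Your proof is correct, and it follows the same skeleton as the paper's: define $n_i$ by $H_i^{n_i}=\gen{H_i,F}$, use $\var F\subseteq \var{H_i^{n_i}}=\coprod_{k\mid n_i}D_{i,k}$ together with Lemma \ref{lem:properties} to show $\bvar F$ can only meet $D_{i,n_i}$, and then prove \eqref{eq:intersect} via the identity $\bigcap_{i\geq 1}H_i^{n_i}=F$. You differ from the paper at two points, both in the direction of more detail. First, the paper asserts $\bigcap_{i\geq 1}H_i^{n_i}=F$ (and the analogous identity for a proper subgroup) without proof, while you prove it by producing a character of $G$ vanishing on $F$ but not on a given $g\notin F$ and writing it as a multiple of a primitive character $z_i$; the argument is correct, though the cited reason (``$G/F$ is divisible'') is really the statement that characters of the torus $G/F\cong\T^2$ separate points, and there is a harmless sign normalisation glossed over when you write $\chi=k\cdot z_i$. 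Second, your endgame is handled differently: to exclude $Q\in\var K$ for a proper $K\subsetneq F$, the paper picks $s$ with $n_s\neq n_s'$ and claims $Q\in D_{s,n_s'}$, which is slightly quick since $Q\in\var K$ only places $Q$ in some $D_{s,j}$ with $j\mid n_s'$; you instead get $n_i\mid m_i$ from disjointness of the $D_{i,j}$ and $m_i\mid n_i$ from $K\subseteq F$, hence $m_i=n_i$ for all $i$, and conclude $K=F$ from the algebraic identity, which closes this step completely. You also spell out the stabilisation needed to apply the two-subgroup Lemma \ref{lem:properties} to the infinite intersection, which the paper leaves implicit. The one point where you are at parity with (not behind) the paper is that the ``existence'' half of the first claim, $\div i\cap\bvar F\neq\emptyset$, ultimately rests on $\bvar F\neq\emptyset$, which is only proved in the lemma immediately following and does not depend on this one.
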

\begin{proof}
	Given a finite subgroup $F$, for every direction $i\geq 1$ let $n_i$ be the integer such that $H_i^{n_i}=\gen {H_i, F}$. Since $F\subseteq H_i^{n_i}$ then $\var F\subseteq \var {H_i^{n_i}}$. Recall from \eqref{eq:decompositions} the decomposition:
	\begin{equation}
		\label{eq:dechini}
		\var {H_i^{n_i}}=\coprod_{j\mid n_i}\dij.
	\end{equation}

	We start by proving $D_{i,n}\cap \bvar F=\emptyset$ for $n\neq n_i$:
	\begin{itemize}
		\item If $n\nmid n_i$ then $D_{i,n}$ is disjoint from $\bvar F$ since from \eqref{eq:dechini} it is disjoint from $\var {H_i^{n_i}}$.
		\item If $n\mid n_i$ but $n\neq n_i$ then $H_i^n\subsetneq H_i^{n_i}$ and $F\nsubseteq H_i^{n}$ since $n_i$ is the minimum integer for which the containment is true. Therefore $F':=H_i^n\cap F$ is a proper subgroup of $F$, and as such:
		\[
		D_{i,n}\cap \var F\subseteq \var {F'}
		\]
		which implies $D_{i,n}\cap \bvar F=\emptyset$.
	\end{itemize}

	To prove $D_{i,n_i}\cap \bvar F\neq \emptyset$ and the second part of the statement we use again the decomposition \eqref{eq:dechini}. Since $\bvar F$ can intersect only $\div i$ and it is contained in $\var {H_i^{n_i}}$, it must be contained in $\div i$. It also follows $\bvar F$ is contained in the intersection \eqref{eq:intersect}. 
	
	We are left to prove that if $Q\in \bigcap_{i\geq 1}\div i$, then $Q\in \bvar F$. First, for every direction $i\geq 1$, $Q\in \XF(H_i^{n_i})$, therefore 
	\[
	Q\in \bigcap_{i\geq 1}\var {H_i^{n_i}}=\var {\bigcap_{i\geq 1}H_i^{n_i}}=\var F.
	\] 
	Now suppose $Q\in \XF(F')$ for a proper subgroup $F'$ of $F$. For every $i\geq 1$ define $n_i'$ such that $H_i^{n_i'}=\gen {H_i,F'}$. Then 
	\begin{equation*}
		\bigcap_{i\geq 1} H_i^{n_i} = F \neq F' = \bigcap_{i\geq 1} H_i^{n_i'}.
	\end{equation*}
	 Therefore it exists an index $s$ for which $n_s'\neq n_s$, but then $Q\in D_{s,n_s}$ and $Q\in D_{s,n_s'}$ which is absurd since they are disjoint. In conclusion $Q\in \bvar F$.
\end{proof}

\begin{lemma}
	For every finite subgroup $F\leq G$, the subset $\bvar F$ is non empty.
\end{lemma}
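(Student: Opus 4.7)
The plan is to translate the condition $\phi \in \bvar F$ for $\phi \in \var F = \Hom_{\text{Ab}}(F^*, \c)$ into a statement about $\phi$ itself, and then produce such a $\phi$ using the torsion of $\c$.

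First I would unwind $\var K \subseteq \var F$ for a subgroup $K \leq F$. By exactness of $\XF$ applied to $0 \to K \to F \to F/K \to 0$ together with character duality, the inclusion $K \hookrightarrow F$ yields a surjection $F^* \twoheadrightarrow K^*$ with kernel $(F/K)^*$, and the induced embedding $\var K \hookrightarrow \var F$ is pre-composition with this surjection. Hence $\phi \in \var K$ if and only if $\phi$ factors through $K^*$, equivalently $\phi$ vanishes on the subgroup $(F/K)^* \leq F^*$. As $K$ ranges over the proper subgroups of $F$, the quotients $F/K$ exhaust all non-trivial quotients of $F$, and duality for finite abelian groups identifies the kernels $(F/K)^*$ with all the non-trivial subgroups of $F^*$. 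Therefore
\[
\bvar F = \{\phi \in \Hom_{\text{Ab}}(F^*, \c) : \phi \text{ is injective}\}.
\]

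Next I would produce such an injective homomorphism. Since $F$ is a closed, hence finite, subgroup of $\T^2$, the classification of finite subgroups of the $2$-torus gives $F \cong \Z/a \oplus \Z/b$ for some $a,b \geq 1$; the dual $F^*$ is of the same form. The elliptic curve $\c$ over $\C$ is analytically $\C/\Lambda$, so for every $n \geq 1$ we have $\c[n] \cong (\Z/n)^2$. Taking $n = \mathrm{lcm}(a,b)$, the group $(\Z/n)^2$ contains a copy of $\Z/a \oplus \Z/b$, yielding an injection $F^* \hookrightarrow \c[n] \hookrightarrow \c$, which is the required element of $\bvar F$.

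The main obstacle is getting the variance right in the first step: $\XF$ is contravariant on characters, so the subvarieties $\var K$ for $K < F$ must be related, via duality, to subgroups of $F^*$ rather than to subgroups of $F$ directly. Once this bookkeeping is in place, the existence of the injection reduces to the standard description of the $n$-torsion on a complex elliptic curve.
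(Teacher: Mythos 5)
Your proof is correct, but it takes a genuinely different route from the paper. The paper argues by counting: it first treats $p$-groups $F\cong \Z_{p^n}\times\Z_{p^m}$, observing that every proper subgroup lies in one of the $p+1$ maximal subgroups, each of index $p$, so the union of the $\XF(F')$ has at most $(p+1)p^{2n+2m-2}$ points while $\XF(F)\cong \c[p^n]\times\c[p^m]$ has $p^{2n+2m}$, and $p+1<p^2$; it then handles general $F$ by decomposing into $p$-groups and taking a product of points chosen in each factor. You instead use Pontryagin duality to turn the defining condition into an intrinsic one: since $\XF(K)\hookrightarrow\XF(F)$ is precomposition with the restriction surjection $F^*\twoheadrightarrow K^*$, a point $\phi\in\XF(F)=\Hom_{\text{Ab}}(F^*,\c)$ lies in some $\XF(K)$ with $K$ proper exactly when $\phi$ kills a non-trivial subgroup of $F^*$, so $\bvar F$ is precisely the set of injective homomorphisms $F^*\to\c$; one is then exhibited directly via $F^*\cong\Z/a\oplus\Z/b\hookrightarrow(\Z/n)^2\cong\c[n]$ with $n=\mathrm{lcm}(a,b)$. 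Both arguments ultimately rest on the same input, namely $\c[n]\cong(\Z/n)^2$ for $\c$ over $\C$, but your version buys more: it avoids the case analysis and the reduction to $p$-groups, it is constructive, and it yields the stronger structural statement that $\bvar F$ consists exactly of the injective maps $F^*\to\c$, which the paper's cardinality comparison does not give. The only bookkeeping point to keep straight is the variance (that the embedding $\XF(K)\hookrightarrow\XF(F)$ really is precomposition with $F^*\twoheadrightarrow K^*$, whose kernel is $(F/K)^*$), and you have handled that correctly.
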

\begin{proof}
	If $F=\{1\}\times \{1\}$ is the trivial subgroup we have $\bvar {\{1\}\times \{1\}}=\var {\{1\}\times \{1\}}=\{e\}\times \{e\}\neq \emptyset$.
	
	If $F= \Z_{p^n}\times \Z_{p^m}$ is a $p$-group, with $p$ prime and $n\leq m$, then applying Lemma \ref{lem:properties}: 
	\[
	\var F\cong \var {\Z_{p^n}} \times \var {\Z_{p^m}}\cong \c[p^n]\times \c[p^m].
	\]
	Every proper finite subgroup of $F$ is contained in a maximal one (i.e. proper subgroup not contained into any other proper subgroup), therefore we can simply consider the maximal subgroups of $F$. 
	If $F$ is cyclic ($n=0$ and $m>0$) the only maximal subgroup of $\Z_{p^m}$ is $\Z_{p^{m-1}}$ and since $\var {\Z_{p^m}}\cong \c[p^m]$ has $p^{2m}$ points while $\var {\Z_{p^{m-1}}}$ has only $p^{2m-2}$ we have 
	\[
	\bvar {\Z_{p^m}}=\var {\Z_{p^m}}-\var {\Z_{p^{m-1}}}\neq \emptyset.		
	\]
	If $F$ is not cyclic, we need to use the following computations and facts. Every maximal subgroup $F'$ of $F$ has index $p$, therefore $\var {F'}$ has $p^{2n+2m-2}$ points. There are exactly $p+1$ maximal subgroups in $F$. Therefore there are at most $(p+1)p^{2m+2n-2}$ points contained in $\bigcup_{F'}\var {F'}$ where $F'$ ranges on all maximal subgroups of $F$. Since $\var F$ has $p^{2m+2n}$ points and $p+1<p^2$ it follows $\bvar F\neq \emptyset$.
	
	For the general case of a finite subgroup $F$, decompose it into a product of $p$-groups:
	\[
	F\cong F_{p_1}\times \dots \times F_{p_k}.
	\]
	Applying the previous case we can pick for each prime $p_i$ a point $Q_i\in \bvar {F_{p_i}}$. The point
	\[
	(Q_1,\dots, Q_k)\in \var F\cong \var {F_{p_1}}\times \dots \times \var {F_{p_k}}
	\]
	is a point in $\bvar F$. Indeed any maximal subgroup $F'$ of $F$ is of the following form: pick one of the factors $1\leq i\leq k$, and a maximal subgroup $F'_{p_i}<F_{p_i}$, replace $F_{p_i}$ with $F_{p_i}'$ in the product 
	\[
	F'=F_{p_1}\times \dots \times F'_{p_i} \times \dots \times F_{p_k}.
	\]
	The point $(Q_1,\dots, Q_k)$ cannot be in any of these $\var {F'}$ since $Q_i\notin \var {F'_{p_i}}$ for each $i$.
\end{proof}

The bottom line is that we have associated to subgroups of $G$, certain subvarieties of $\X$ of the same dimension:
\begin{center}
	\begin{tabular}{ccc}
		\toprule
		Codimension & Subgroups of $G$ & subvarieties of $\X$ \\
		\midrule
		$0$ & $G$ & $\X$ \\
		$1$ & $H_i^j$ & $\dij$ \\
		$2$ & $F$ & $\bvar F$ \\
		\bottomrule
	\end{tabular}
\end{center}

\section{Change of topology}
\label{sec:2}
In this section we change the topology on the algebraic variety $\zar \X=\c\times \c$. We use $\zar \X$ to denote the algebraic variety with the usual Zariski topology. Recall $G=\T^2$ and that we have fixed an elliptic curve $\c$ over $\C$. 
\begin{definition}
	\label{def:tp}
	Over the set $\zar \X$ define the torsion point topology $\tp \X$ with generating closed subsets $\{\dij\}_{ij}$, where $i\geq 1$ and $j\geq 1$ (recall the definition of $\dij$ in \eqref{eq:dij}).
\end{definition}
\begin{rem}
	\label{rem:sober}
	A delicate remark is imperative here. The topological space $\tp \X$ is not $T_0$: there are different points that are topologically undistinguishable (i.e. they are exactly in the same open subsets for the TP-topology). In view of the previous section and in particular Lemma \ref{lem:fin}, we have a straightforward way to check when two points in $\tp \X$ are topologically undistinguishable: they are topologically undistinguishable if and only if they belong exactly to the same subsets in the collection $\{\dij\}_{ij}$. For example the generic point of $\zar \X$ is topologically undistinguishable from any point that does not belong to any of the $\dij$. This will not be an issue until we will need the theory of Cousin complexes as developed by Grothendieck. To apply all that machinery we will need a sober space (i.e. every closed irreducible subset has a unique generic point), and $\tp \X$ is not a sober space. There is an easy fix to this which is to consider the Kolmogorov quotient of $\tp \X$: $\KQ(\tp \X)$, namely the quotient of $\tp \X$ by the equivalence relation of topologically undistinguishable points. 
	For the sake of clarity we will not change the underlying set of $\zar \X$ and $\tp \X$ and we ask the reader to keep in mind that all the results we will prove for $\tp \X$ in this section apply as well if you substitute $\tp \X$ with its Kolmogorov quotient, since topologically undistinguishable points have exactly the same stalks and nothing changes from the point of view of sheaves. The same is true with the results in the rest of this paper with the only exceptions of the theory of Cousin complexes where we will stress the need of a sober space.
\end{rem}

Notice that every subset in the generating collection $\{\dij\}_{ij}$ is a closed subset also in the Zariski topology. Therefore every TP-open is also a Zariski-open, and we have a well defined continuous map $\phi: \zar \X \to \tp \X$. 

\subsection{The pushforward is exact on quasi-coherent sheaves}
Since $\phi$ is continuous it induces a pair of adjoint functors \cite[Section II.4]{iversen:coho} called pushforward and pullback (or direct and inverse image) of sheaves:
\begin{equation*}
	\begin{tikzcd}
	\Shv(\tp \X)\ar[r,bend left,"\phi^{-1}",""{name=A, below}] & 
	\Shv(\zar \X) \ar[l,bend left,"\phi_*",""{name=B,above}] \ar[from=A, to=B, symbol=\dashv]
	\end{tikzcd}
\end{equation*}

For a Zariski-sheaf $\F$ its pushforward sheaf is defined on a TP-open $U$ by:
\[
(\phi_*\F)(U):=\F(\phi^{-1}(U))
\]
This defines indeed a TP-sheaf, moreover $\phi_*$ is in general always left exact, and sends injective sheaves to injective sheaves since its left adjoint is exact \cite[Corollary 4.13]{iversen:coho}. The functor $\phi^{-1}$ is left adjoint to $\phi_*$ \cite[Theorem 4.8]{iversen:coho}, but it is not true in general that the pushforward is exact. However in our situation $\phi_*$ is exact on quasi-coherent sheaves since in the TP topology we still have an open cover of Zariski affines:
\begin{lemma}
	\label{lem:affine}
	Every point in $\tp \X$ is contained in a TP-open which is an open affine in the Zariski topology.
\end{lemma}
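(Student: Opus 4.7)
The plan is to reduce the statement to the classical fact that on a smooth projective curve the complement of any non-empty finite subset is affine (since any effective divisor on a curve is ample precisely when it has positive degree), applied to each factor of $\X = \c \times \c$.

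Given a point $x = (a,b) \in \X$, I would first choose integers $n_1, n_2 \geq 1$ with $a \notin \c\gen{n_1}$ and $b \notin \c\gen{n_2}$. This is always possible because the subsets $\c\gen n$ of points of exact order $n$ are pairwise disjoint as $n$ varies, so $a$ belongs to at most one of them, and likewise for $b$; indeed one can take $n_1 = 1$ unless $a = e$ (in which case $n_1 = 2$ works), and similarly for $n_2$. I would then set
\[
U := \X \setminus (D_{1,n_1} \cup D_{2,n_2}),
\]
which is TP-open by construction, as the complement of a finite union of generating TP-closed subsets.

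To exhibit $U$ as a Zariski affine open, I would invoke Remark \ref{rem:decomposition} together with the fact that $\pi_1, \pi_2 : \X \to \c$ are the two coordinate projections: this identifies $D_{i, n_i} = \pi_i^{-1}(\c\gen{n_i})$ and gives
\[
U = (\c \setminus \c\gen{n_1}) \times (\c \setminus \c\gen{n_2}).
\]
The choice of $n_1, n_2$ ensures $x \in U$, each factor on the right is affine by the ampleness fact recalled above (a non-empty finite set of points defines a divisor of positive degree on $\c$), and the product of two affine varieties is affine, concluding the argument.

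I do not anticipate any substantive obstacle: the content of the lemma is essentially that the TP-topology is still coarse enough to retain standard affine covers of $\X$ built from preimages of the torsion subsets $\c\gen n$ under the coordinate projections. Note also that by the discussion in Remark \ref{rem:sober}, topologically indistinguishable points share the same TP-open neighborhoods, so the construction is insensitive to the choice of representative within each equivalence class.
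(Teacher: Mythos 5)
Your proof is correct and is essentially the paper's argument: the paper's one-line proof likewise takes the complement of the union of two TP-closed sets $D_{ij}$ and $D_{rs}$ along different directions $i\neq r$, and you simply specialize to the two coordinate directions, where affineness is transparent because the complement is the product $(\c\setminus\c\gen{n_1})\times(\c\setminus\c\gen{n_2})$ of two affine curves. The only point worth making explicit is that ``point of $\tp \X$'' includes non-closed points (the generic point of $\X$ and generic points of curves), but the same choice of $n_1,n_2$ handles these: such a point lies in your $U$ unless its closure is contained in $D_{1,n_1}$ or $D_{2,n_2}$, which the freedom in choosing $n_1,n_2$ (or your closing remark on topologically indistinguishable points) rules out.
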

\begin{proof}
	In $\zar \X$ the complement of the union of two TP-closed subsets $D_{ij}$ and $D_{rs}$ along different directions ($i\neq r$) is a TP-open which is affine in the Zariski topology. 
\end{proof}
\begin{corollary}
	\label{cor:exact}
	The functor $\phi_*$ is exact on quasi-coherent sheaves.
\end{corollary}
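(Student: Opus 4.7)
Since $\phi_*$ is right adjoint to $\phi^{-1}$, it is automatically left exact, so the only content of the statement is the preservation of surjections between quasi-coherent sheaves. My plan is to prove the stronger assertion that the higher direct images $R^p\phi_*\F$ vanish for every $p>0$ and every quasi-coherent $\F$ on $\zar\X$; exactness of $\phi_*$ on an arbitrary short exact sequence then follows from the associated long exact sequence of derived functors.

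The sheaf $R^p\phi_*\F$ is the sheafification of the presheaf on $\tp\X$ sending a TP-open $U$ to $H^p(\phi^{-1}(U),\F)=H^p(U_{\mathrm{Zar}},\F)$, so vanishing is a local question and reduces to exhibiting, for every TP-open $U$, a TP-cover of $U$ by Zariski-affine TP-opens, on which quasi-coherent cohomology vanishes by Serre's theorem. To produce such a cover I would use Lemma \ref{lem:affine} to attach to each $x\in U$ a Zariski-affine TP-neighbourhood $W_x=\X\setminus(D_{i,j}\cup D_{r,s})$ with $i\neq r$ containing $x$, and set $V_x:=W_x\cap U$. The family $\{V_x\}_{x\in U}$ covers $U$ by TP-opens, and each $V_x$ has complement a finite union of TP-divisors involving at least the two independent directions $i,r$ inherited from $W_x$.

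The main obstacle is to verify that each $V_x$ is itself Zariski-affine, which amounts to extending Lemma \ref{lem:affine} from two divisors to an arbitrary finite union of TP-divisors spanning at least two independent directions. My approach would exploit that the pair of independent projections $(\pi_i,\pi_r):\X\to\c\times\c$ is an isogeny, under which removal of $D_{ij}\cup D_{rs}$ presents $W_x$ as a finite \'etale cover of a product of Zariski-affine opens in $\c$. The remaining divisors $D_{i_k,j_k}$ in the complement of $V_x$ restrict to divisors on this affine that can be cut out by single regular functions, using the group law on $\c$ to produce, after possibly enlarging the base affine to clear obstructing poles, explicit rational functions vanishing to first order along each $\c\gen{j_k}$; removing these principal divisors leaves a basic open which remains affine, and its finite \'etale cover $V_x$ is therefore also affine. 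With the TP-cover $\{V_x\}_{x\in U}$ in hand, $R^p\phi_*\F$ has vanishing stalks at every point of $\tp\X$, so is zero, and the corollary follows.
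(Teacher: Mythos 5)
Your overall strategy is legitimate and is essentially the derived-functor version of what the paper does: proving $R^p\phi_*\F=0$ for $p>0$ on quasi-coherent sheaves and quoting the long exact sequence is fine, and the paper's proof is the stalkwise shadow of this (it computes $(\phi_*\F)_x$ as a filtered colimit of sections over Zariski-affine TP-opens and uses exactness of sections over affines). Both arguments need the same geometric input, which you correctly isolate: not just one affine TP-neighbourhood per point as in Lemma \ref{lem:affine}, but arbitrarily small ones, i.e.\ affineness of complements of arbitrary finite unions of the $D_{ij}$ spanning at least two directions. (A small side remark: a general TP-open need not have complement a finite union of the $D_{ij}$ -- TP-closed sets are arbitrary intersections of finite unions of generators -- so before asserting this for $V_x=W_x\cap U$ you should first shrink $U$ to a neighbourhood of $x$ whose complement is such a finite union; this is possible precisely because the $D_{ij}$ form a subbasis of closed sets, and it costs nothing for the sheafification argument.)

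The genuine gap is in your mechanism for this extended affineness claim. A divisor $D_{i_k j_k}$ in a third direction $i_k\notin\{i,r\}$ cannot be cut out, even set-theoretically, by a single regular function on $W_x=\X\setminus(D_{ij}\cup D_{rs})$: any rational function regular on $W_x$ has polar divisor supported on fibres of $\pi_i$ and $\pi_r$, so its divisor class lies in the subgroup of the divisor class group generated by those two fibre classes, whereas the class of a fibre of a third elliptic fibration does not. Concretely, if the diagonal $\Delta$ were numerically equivalent to $aD_1+bD_2$, intersecting with $D_1$ and $D_2$ forces $a=b=1$, contradicting $\Delta^2=0\neq 2=(D_1+D_2)^2$; the same N\'eron--Severi obstruction rules out any power or multiple. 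Pulling back a function from $\c$ "vanishing along $\c\gen{j_k}$'' does not help either: its poles necessarily lie along $i_k$-direction fibres, which are not among the removed divisors, and no enlarging or shrinking of the base affine can change this, since the obstruction is numerical and independent of which fibres in the directions $i,r$ you delete. So the step "removing these principal divisors leaves a basic open'' fails whenever the complement of $U$ involves a third direction, which it can. The claim you need is nevertheless true, and can be rescued in two ways: (i) by ampleness -- a sum of fibres in at least two directions has positive self-intersection and positive intersection with every irreducible curve, hence is ample by Nakai--Moishezon, so its complement is affine; or (ii) more elementarily, a complement of a finite union in at least two directions is a finite intersection of two-direction complements $\X\setminus(\pi_a^{-1}(S_a)\cup\pi_d^{-1}(S_d))$, each affine by your isogeny argument, and a finite intersection of affine opens in the separated scheme $\X$ is affine. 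With either replacement your $R^p\phi_*$ argument closes.
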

\begin{proof}
	Consider a point $x\in \tp \X$, applying Lemma \ref{lem:affine} we can compute the TP-stalk at $x$ as a colimit over TP-opens that are open affines in the Zariski topology. By \cite[Theorem 3.5]{hartshorne} taking sections over an open affine $\Gamma(\Spec(R),\F)$ for a quasi-coherent Zariski sheaf $\F$ is an exact functor. Therefore the functor $\F \to (\phi_*(\F))_x$ is  exact since it is a colimit of exact functors.   
\end{proof}
\begin{corollary}
	\label{cor:coh}
	If $\F$ is a quasi-coherent Zariski sheaf then its cohomology in both topologies is the same:
	\begin{equation}
		\label{eq:commcoh}
		H^*(\zar \X, \F)\cong H^*(\tp \X, \phi_*(\F))
	\end{equation}
\end{corollary}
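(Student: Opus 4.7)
The plan is to exhibit a single resolution of $\F$ whose complex of global sections simultaneously computes both cohomologies. First I would use that $\zar{\X}$ is a Noetherian scheme to take a resolution $\F\to \I^\bullet$ by quasi-coherent injective $\O_{\X}$-modules. By the standard fact (e.g.\ \cite[III.2.4]{hartshorne}) each $\I^n$ is flabby as a sheaf of $\O_{\X}$-modules on $\zar{\X}$, so this resolution computes Zariski cohomology, $H^*(\zar{\X},\F)\cong H^*(\Gamma(\zar{\X},\I^\bullet))$.

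Next I would push the resolution forward along $\phi$. Since every $\I^n$ is quasi-coherent, Corollary \ref{cor:exact} applied termwise to $0\to \F\to \I^0\to \I^1\to \cdots$ produces an exact sequence $0\to \phi_*\F\to \phi_*\I^0\to \phi_*\I^1\to \cdots$ in $\Shv(\tp{\X})$. I then claim that $\phi_*$ preserves flabbiness: for any Zariski-flabby sheaf $\G$ and any TP-open $U\subseteq \tp{\X}$, a section $s\in (\phi_*\G)(U)=\G(\phi^{-1}(U))$ extends by flabbiness of $\G$ to a global section in $\G(\zar{\X})=(\phi_*\G)(\tp{\X})$. Hence $\phi_*\I^\bullet$ is a flabby resolution of $\phi_*\F$, and therefore computes TP-cohomology, $H^*(\tp{\X},\phi_*\F)\cong H^*(\Gamma(\tp{\X},\phi_*\I^\bullet))$.

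To finish, I would observe that the two complexes of global sections coincide. Because $\phi^{-1}(\tp{\X})=\zar{\X}$, one has $\Gamma(\tp{\X},\phi_*\I^n)=\I^n(\zar{\X})=\Gamma(\zar{\X},\I^n)$ for every $n$, and the differentials agree because in both cases they are induced by those of $\I^\bullet$ itself. The two cohomologies are then literally the cohomologies of the same complex, yielding \eqref{eq:commcoh}.

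The only substantive input is the exactness of $\phi_*$ on quasi-coherent sheaves, already supplied by Corollary \ref{cor:exact}; everything else is formal homological algebra. An alternative route via the Grothendieck--Leray spectral sequence $H^p(\tp{\X},R^q\phi_*\F)\Rightarrow H^{p+q}(\zar{\X},\F)$ would demand the vanishing $R^q\phi_*\F=0$ for $q>0$, which in turn requires that the Zariski-affine TP-opens provided by Lemma \ref{lem:affine} be cofinal among TP-neighbourhoods of every point (so that Serre vanishing on affines kills the stalks). I expect that combinatorial cofinality check to be the main technical obstacle on the spectral-sequence path; the flabby-resolution strategy above sidesteps it entirely.
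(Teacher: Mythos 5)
Your argument is correct and essentially reproduces the paper's proof: both take a quasi-coherent injective resolution of $\F$, use Corollary \ref{cor:exact} to push it forward exactly along $\phi$, and conclude by observing that the two complexes of global sections literally coincide. The only cosmetic difference is how acyclicity of the pushed-forward resolution is justified — you use that injective $\O_{\X}$-modules are flabby and that $\phi_*$ preserves flabbiness, while the paper instead notes that $\phi_*$ preserves injectives since its left adjoint $\phi^{-1}$ is exact; both are standard and equivalent for this purpose.
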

\begin{proof}
	The functor $\phi_*$ is exact on quasi-coherent sheaves by Corollary \ref{cor:exact} and preserves injective sheaves \cite[Corollary 4.13]{iversen:coho}. Therefore we can consider a quasi-coherent injective resolution of $\F$ \cite[\href{https://stacks.math.columbia.edu/tag/077K}{Tag 077K}]{stacks-project}. Applying $\phi_*$ to this injective resolution we have an injective resolution of $\phi_*(\F)$. It is enough now to notice that taking global section for a Zariski sheaf or its pushforward is the same.
\end{proof}
\begin{nota}
    We will denote by $H^*(\X,\F)$ the common value of these two cohomologies, and denote by
\[
\tp \ox:=\phi_*(\zar \ox)
\] 
the pushforward of the Zariski structure sheaf. In the rest of the paper we use this TP notation to indicate the pushforward $\phi_*$.
\end{nota}
 The space $(\tp \X, \tp \ox)$ is a ringed topological space and the functor $\phi_*$ factors through the respective categories of modules:
\[
\phi_*:\Mod(\zar \ox) \to \Mod(\tp \ox).
\]

We are interested in explicitly computing the TP-topology stalks of the pushforward of a Zariski-sheaf $\F$.
\begin{itemize}
	\item If $x=\eta(\tp \X)$ is a generic point of the whole space, we can take a colimit of complements of increasingly bigger unions of the $\dij$:
	\begin{equation}
		\label{eq:stalkgen}
		(\phi_*\F)_{x}=\varinjlim_{n\to \infty}\F(\zar \X \setminus \bigcup_{i,j\leq n} \dij).
	\end{equation}
	For $\F=\zar \ox$ the colimit above picks the regular functions on the complement of increasingly bigger unions of the $\dij$. This yields those meromorphic functions on $\zar \X$ that are allowed poles only in the collection $\{\dij\}$.
	\begin{equation}
		\label{eq:k}
		\K:= \tp {\oxx {x}}=\{f\in \K(\zar \X)\mid f\,  \text{is allowed poles only at}\, \{\dij\}\}.
	\end{equation}
	\item If $x=\eta(\dij)$ is a generic point of a generating closed subset, simply skip $\dij$ itself in \eqref{eq:stalkgen}. For $\F=\zar \ox$ this yields:
	\begin{equation}
		\label{eq:odij}
		\O_{\dij}:= \tp {\oxx x}=\{f\in \K\mid f \,\text{is regular at} \, \dij\}.
	\end{equation}
	We also denote $m_{ij}<\odij$ the ideal of those functions vanishing at $\dij$.
	\item If $x \in \bvar F$
	(which automatically makes it also a generic point for $\bvar F$), then in \eqref{eq:stalkgen} simply skip all the $\dij$ containing $\bvar F$. By Lemma \ref{lem:fin} for every direction $i\geq 1$, only $D_{i,n_i}$ contains $\bvar F$. Therefore:
	\begin{equation}
		\label{eq:of}
		\O_F:= \tp {\oxx x}=\{f\in \K\mid \, \forall i\geq 1, \, f \, \text{is regular at}\, D_{i,n_i} \}.
	\end{equation}
	We also denote $m_F<\O_F$ the ideal of those functions vanishing at $\bvar F$.
\end{itemize}
\begin{rem}
	\label{rem:stalk}
	When $\F$ is a quasi-coherent $\zar \ox$-module we can use commutative algebra to compute the stalk at a point $x\in \tp \X$. Pick a TP-open containing the point which is an open affine in the Zariski topology (Lemma \ref{lem:affine}): $U=\Spec(R)$. Then $\F$ restricted to that open is isomorphic to the sheaf $\widetilde M$ for an $R$-module $M$. Modulo restricting the affine open $U$, deleting the closed subset $\dij$ from $U$ corresponds to inverting those elements in $R$ that vanish at $\dij$. Therefore the stalk at $x$ in the TP-topology is $S^{-1}M$ for the multiplicatively closed subset $S$ generated by those elements vanishing at TP-closed subsets. Notice that it is exactly as in the Zariski topology, with the only difference that instead of inverting everything outside that prime, we invert just those elements outside that prime corresponding to the generating closed subsets for the TP-topology. 
\end{rem}

\subsection{Choice of coordinates}
The aim of this subsection is to build a set of uniformizers for the subvarieties $\dij$ with respect to the TP-topology. We construct them over the algebraic variety $\X$ with its normal Zariski topology, but notice that they are defined for the TP-topology as well (they belong in $\K$). All the uniformizers we build here only depend upon a choice of a coordinate $t_e\in \O_{\c,e}$ vanishing to the first order at $e$, and with poles only at points of finite order of $\c$.
 
We begin by recalling the definition of the TP-topology for the single elliptic curve \cite[Definition 7.1]{john:elliptic}:

\begin{definition}
    \label{def:tptopc}
	Over the set $\c$ define the torsion point topology $\tp \c$ with generating closed subsets $\{\c\gen n\}_{n\geq 1}$, where $\c\gen n$ are the elements of exact order $n$ in $\c$.
\end{definition}

This is a ringed topological space with the pushforward of the structure sheaf: $\tp \O_{\c}$. Likewise \cite[Definition 8.2]{john:elliptic} choose a coordinate for $\c$ at $e$:
\begin{choice}
	\label{choice:te}
	Choose $t_e\in \tp \O_{\c,e}\subset \O_{\c,e}$ vanishing to the first order at $e$.
\end{choice}

\begin{rem}
	We denote $m_e<\O_{\c,e}$ the maximal ideal of those functions vanishing at $e$. Then $m_e$ is principal with generator $t_e$, and the same is true if we restrict $m_e$ to  $\tp \O_{\c,e}$. Note that by \cite[Proposition 8.1]{hochster:local} we obtain the same result if we complete those two two rings with respect to those two maximal ideals:
	\begin{equation}
		\label{eq:completed}
		(\tp \O_{\c,e})^{\wedge}_{m_e}\cong ( \O_{\c,e})^{\wedge}_{m_e}\cong \C[[t_e]].
	\end{equation}
	Elements in \eqref{eq:completed} are functions defined in a formal neighbourhood of the identity of $\c$, and can be written as formal power series with complex coefficients in the variable $t_e$.
\end{rem}

The isomorphism $\X = \c\times \c$ is given through the two projections $\pi_1:\X \to \c$ and $\pi_2:\X \to \c$. The pullbacks of the coordinate: $t_1:=\pi_1^*(t_e)$ and $t_2:=\pi_2^*(t_e)$ respectively define uniformizers for $D_1=\{e\}\times \c$ and $D_2=\c\times \{e\}$ and together they generate the maximal ideal $m$ in the stalk $\O_{\X,O}$ of those functions that vanishes at $O=(e,e)$. Exactly as before when we complete with respect to $m$ we get $(\O_{\X,O})^{\wedge}_m\cong \C[[t_1,t_2]]$.

This is a way to manifest the formal group law of the elliptic curve $\c$. If $g:\X= \c\times \c \to \c$ is the group law of the elliptic curve, we have an induced map on the completed rings
\begin{equation*}
	\begin{split}
		g^*:\C[[t_e]]\cong (\O_{\c,e})^{\wedge}_{m_e} &\to (\O_{\X,O})^{\wedge}_{m} \cong \C[[t_1,t_2]]\\
		t_e &\mapsto F(t_1,t_2)
	\end{split}
\end{equation*}
The element $F(t_1,t_2)$ is the formal group law of the elliptic curve $\c$ with respect to the uniformizer $t_e$. Since we are over a field of characteristic zero there exists a unique logarithm for $F$ (See for example \cite[Proposition 3.1]{strickland:formal}), namely a strict isomorphism with the additive formal group law:
\begin{theorem}
	There exists a unique element $\hat t_e\in (\O_{\c,e})^{\wedge}_{m_e}$ that can be written as a formal power series with complex coefficients:
	\begin{equation}
		\label{eq:powerserieste}
		\hat t_e:=f(t_e)=\sum_{k=1}^{\infty}\alpha_k t_e^k\in \C[[t_e]]
	\end{equation}
	with $\alpha_1=1$ and such that
	\[
	f(F(t_1,t_2))=f(t_1)+f(t_2).
	\]
\end{theorem}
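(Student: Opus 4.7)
The plan is to construct $\hat t_e$ as the integral of the invariant differential of the formal group law $F$, a classical construction that works in characteristic zero. I will proceed in four steps: derive a necessary first-order condition that $f$ must satisfy, use it to define $f$ uniquely (invoking characteristic zero to integrate formally), verify translation invariance, and conclude by an antiderivative-uniqueness argument.

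First I would extract a necessary condition. Assume $f \in \C[[t_e]]$ with $f(0)=0$ and $\alpha_1 = 1$ satisfies $f(F(t_1,t_2)) = f(t_1)+f(t_2)$. Differentiating in $t_2$ and setting $t_2 = 0$ gives
\[
f'(t_1)\cdot F_Y(t_1,0) = f'(0) = 1,
\]
where $F_Y$ denotes the partial derivative with respect to the second variable. Set $p(t):= F_Y(t,0) \in \C[[t_e]]$. Since $F(t_1,t_2) = t_1+t_2+\text{higher order}$, we have $p(0)=1$, so $p$ is a unit in $\C[[t_e]]$. Hence $f$ is necessarily the unique formal power series with $f(0)=0$ and $f'(t) = 1/p(t)$. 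In characteristic zero, writing $1/p(t) = \sum_{k\geq 0} \beta_k t^k$ (with $\beta_0=1$), the formula $\alpha_k = \beta_{k-1}/k$ for $k\geq 1$ defines $f$ uniquely and indeed yields $\alpha_1 = 1$. This simultaneously proves uniqueness.

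For existence, I need to verify that this $f$ actually satisfies the homomorphism property. The key is translation invariance of the differential $\omega := dt/p(t)$. Differentiating the associativity identity $F(a, F(t,s)) = F(F(a,t), s)$ in $s$ and setting $s = 0$ yields
\[
F_Y(a,t)\cdot p(t) = p(F(a,t)),
\]
i.e., if $F_a(t) := F(a,t)$ then $F_a^{*}\omega = \omega$.

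Finally, fix $a$ and consider $G(t_2) := f(F(a,t_2)) - f(a) - f(t_2) \in \C[[t_2]]$. Using the invariance identity,
\[
G'(t_2) = \frac{F_Y(a,t_2)}{p(F(a,t_2))} - \frac{1}{p(t_2)} = 0,
\]
and $G(0) = f(F(a,0)) - f(a) - f(0) = 0$. Hence $G \equiv 0$, establishing $f(F(t_1,t_2)) = f(t_1) + f(t_2)$. The only subtle point is the use of characteristic zero to divide by integers when integrating term by term; everything else is a formal manipulation in $\C[[t_1,t_2]]$, so no analytic subtleties arise.
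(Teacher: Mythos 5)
Your proof is correct, and in fact the paper does not prove this statement at all: it simply cites \cite[Proposition 3.1]{strickland:formal}, where the argument given is exactly the one you reconstruct (the logarithm as the formal antiderivative of $1/F_Y(t,0)$, with translation invariance extracted from associativity and uniqueness from the normalization $\alpha_1=1$ in characteristic zero). So your write-up amounts to supplying the standard proof behind the paper's citation, with all the formal-power-series points (composability of $f$ with $F$, and $G'=0$, $G(0)=0$ forcing $G=0$ over a $\Q$-algebra) handled correctly.
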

As an immediate corollary:
\begin{corollary}
	\label{lem:lincomb}
	Given two integers $r,s \in \Z$ the linear map
	\begin{equation}
		\label{eq:linearmaps}
		\begin{split}
			\X\cong \c\times \c &\xrightarrow{(r,s)} \c\\
			(x,y) &\mapsto rx+sy
		\end{split}
	\end{equation}
	induces on the completed local rings a map:
	\[
	(r,s)^*:(\O_{\c,e})^{\wedge}_{m_e} \to (\O_{\X,O})^{\wedge}_{m}
	\]
	such that
	\[
	(r,s)^*(f(t_e))=rf(t_1)+sf(t_2).
	\]
\end{corollary}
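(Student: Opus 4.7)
The plan is to factor the linear map $(r,s):\c\times\c\to\c$ through the group law of the elliptic curve and then exploit the defining property of the logarithm $f$ from the previous theorem. Writing $[n]:\c\to\c$ for the multiplication-by-$n$ endomorphism and $g$ for the group law, I decompose
\[
(r,s) \;=\; g \circ \bigl([r]\times[s]\bigr),
\]
which on completed local rings at the identity gives $(r,s)^* = ([r]\times[s])^* \circ g^*$.

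First I apply $g^*$ to $f(t_e)$. Since $g^*(t_e) = F(t_1,t_2)$ by definition of the formal group law, the logarithm property yields
\[
g^*(f(t_e)) \;=\; f(F(t_1,t_2)) \;=\; f(t_1) + f(t_2).
\]
Because $[r]\times[s]$ acts on the two factors independently, applying $([r]\times[s])^*$ next produces
\[
(r,s)^*(f(t_e)) \;=\; f([r](t_1)) + f([s](t_2)),
\]
where $[n](t)\in \C[[t]]$ denotes the multiplication-by-$n$ power series induced by $F$.

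The remaining step, which I expect to be the only substantive one, is the identity $f\circ[n] = n\cdot f$ in $\C[[t_e]]$. For $n\geq 0$ I would proceed by induction on $n$: the base case $[0](t)=0$ gives $f([0](t))=0$, and the inductive step uses $[n+1](t)=F(t,[n](t))$ together with the logarithm property to obtain $f([n+1](t)) = f(t) + f([n](t)) = (n+1)f(t)$. For $n<0$ the relation $F(t,[-1](t))=0$ forces $f([-1](t)) = -f(t)$, and the general negative case reduces to the positive one. Substituting these identities into the previous display yields $(r,s)^*(f(t_e)) = rf(t_1) + sf(t_2)$, as claimed. The only real obstacle is the identity $f\circ[n]=n\cdot f$, which is a standard property of logarithms of formal group laws in characteristic zero and whose verification is routine.
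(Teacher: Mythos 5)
Your argument is correct and amounts to the routine verification the paper has in mind: the paper states this as an immediate corollary of the logarithm theorem and omits any proof, and your factorization $(r,s)=g\circ([r]\times[s])$ together with the additivity $f(F(t_1,t_2))=f(t_1)+f(t_2)$ is precisely the intended argument. The only point you flag as an obstacle, the identity $f\circ[n]=n\cdot f$, is indeed standard and your inductive verification of it (including the negative case via $F(t,[-1](t))=0$) is sound, so the proof is complete.
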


We can now simply pullback $t_e$ and $\hat t_e$ along the various projections $\pi_i^j:\X\to \c$ (Definition \ref{def:zij}). Note that $\pi_i^j=(j\lambda_i, j\mu_i)$ is a linear map of the kind of \eqref{eq:linearmaps}, and therefore it induces maps on the completed and uncompleted local rings at the identities.
\begin{definition}
	For every $i\geq 1$ and $j\geq 1$ define the coordinate
	\begin{equation}
		\label{eq:tij}
		t_{ij}:=(\pi_i^j)^*(t_e)\in \O_{\X,O}
	\end{equation}
\end{definition}
	\begin{rem}
		Since $t_e\in \tp \O_{\c,e}$ we have that $\tij\in \K$, namely it has poles only in the collection of generating closed for the TP-topology. Moreover $\tij$ vanishes at first order at $\var {H_i^j}$ and therefore at $\dij$. This yields $\tij \in \O_{\dij}$ (defined in \eqref{eq:odij}), and that it generates the principal ideal $m_{ij}$ of those functions vanishing at $\dij$. 
	\end{rem}
	\begin{definition}
	\label{def:coord}
		For every $i\geq 1$ and $j\geq 1$ define the completed coordinate
		\begin{equation}
			\label{eq:hatij}
			\hat t_{ij}:=(\pi_i^j)^*(\hat t_e) \in (\O_{\X,O})^{\wedge}_{m}=(\tp {\O_{\X,O}})^{\wedge}_{m}
		\end{equation}
	\end{definition}
	\begin{rem}
	Note that $(\pi_i^j)^*:(\O_{\c,e})_{m_e}^{\wedge}\to (\O_{\X,O})_m^{\wedge}$ is a continuous map of completed rings over $\C$, therefore $\hatij$ can be expressed using the power series \eqref{eq:powerserieste} in the variable $\tij$:
	\begin{equation}
		\label{eq:hatijpowerseries}
		\hat t_{ij}=(\pi_i^j)^*(\sum_{k=1}^{\infty}\alpha_k t_e^k)=\sum_{k=1}^{\infty}\alpha_k t_{ij}^k.
	\end{equation}
	From this expansion it is transparent that $\hat t_{ij}\in (\O_{\dij})^{\wedge}_{m_{ij}}$.
	\end{rem}

\section{Cousin complex}
\label{sec:3}
The aim of this section is to prove that the Cousin complex of the structure sheaf $\O=\tp \ox$ for the TP-topology is a flabby resolution of $\O$. Here Cousin complex is meant in the sense of the theory developed in \cite[Chapter 4]{hartshorne:residues} (all the quotes refer to that chapter). To apply in full the machinery developed in that chapter we need a sober topological space, and $\tp \X$ is not sober as discussed in Remark \ref{rem:sober}. To fix this issue we substitute $\tp \X$ with its Kolmogorov quotient to obtain a sober space. We note as explained in Remark \ref{rem:sober} that all the other results including the ones in the previous section apply indifferently to $\tp \X$ and its Kolmogorov quotient. We do not change notation for it, and we highlight that being sober is necessary in particular for the filtration \eqref{eq:filt}, and for the splitting \eqref{eq:ccdec}, where we need to index on the generic points of the irreducible closed subsets. Everything we say about stalks, sheaves, sections and supports apply unchanged to $\tp \X$ and its Kolmogorov quotient.  

To apply in full the machinery of Cousin complexes we need a topological space $X$ and a filtration $X=Z^0\supseteq Z^1 \supseteq \dots$ satisfying the following hypothesis:
\begin{hyp}
	\label{hyp:cousin}
	\begin{enumerate}
		\item $X$ is locally Noetherian.
		\item $X$ is sober: every closed irreducible subset of $X$ has a unique generic point.
		\item The filtration is stable under specialization.
		\item Every element in $Z^n\setminus Z^{n+1}$ is maximal under specialization  for every $n\geq 0$.
		\item The filtration is separated: $\bigcap_{n\geq 0}Z^n=\emptyset$.
	\end{enumerate}
\end{hyp}
The only filtration we will consider is the codimension filtration of a topological space $X$: 
\begin{definition}
	Define the codimension filtration $X=Z^0\supseteq Z^1 \supseteq \dots$ of a topological space $X$ as:
	\begin{equation}
		\label{eq:filt}
		Z^n:=\{x \in X\mid \Codim(x)\geq n\}.
	\end{equation}
\end{definition}
\begin{proposition}
	\label{prop:cousincomplex}
	Since the Kolmogorov quotient $\tp \X$ with the codimension filtration satisfies the Hypothesis \ref{hyp:cousin} then by \cite[Proposition 2.3 and 2.5]{hartshorne:residues} we can consider the Cousin complex of $\O=\tp \ox$:
	\begin{equation}
		\label{eq:cc1}
		\begin{tikzcd}
			\O \arrow[r] & \CC^0(\O) \arrow[r, "d_0"] & \CC^1(\O) \arrow[r, "d_1"] & \CC^2(\O) \arrow[r] &0.
		\end{tikzcd}
	\end{equation}
	Each term can be decomposed further \cite[Variation 8 Motif F pg. 225]{hartshorne:residues}:
	\begin{equation}
		\label{eq:ccdec}
		\CC^n(\O):=\underline{H}^n_{Z^n/Z^{n+1}}(\O)= \bigoplus_{x\in Z^n\setminus Z^{n+1}}\iota_x(\h n x(\O)).
	\end{equation}
\end{proposition}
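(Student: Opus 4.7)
My proof plan is to verify the five conditions of Hypothesis \ref{hyp:cousin} for $X := \KQ(\tp\X)$ equipped with the codimension filtration $Z^\bullet$; once these are in hand, the existence of the Cousin complex \eqref{eq:cc1} and its decomposition \eqref{eq:ccdec} are direct applications of \cite[Ch.\ IV, Prop.\ 2.3 and 2.5]{hartshorne:residues}.

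Four of the five conditions are essentially formal. Separation (5) is immediate since $\dim \X = 2$ forces $Z^n = \emptyset$ for $n \geq 3$. Conditions (3) and (4) are general properties of the codimension filtration on any sober space: specialization can only weakly increase codimension (so $Z^n$ is stable), and two distinct points of the same codimension cannot be related by a proper specialization since the generic point of any irreducible closed subset is unique in a sober space. Sobriety (2) holds by construction of the Kolmogorov quotient, but I would pair this with an explicit enumeration of the irreducible closed subsets of $X$: using the description of TP-closed sets as arbitrary intersections of finite unions of the generating $\dij$, together with Lemma \ref{lem:fin} and Remark \ref{rem:sober}, these are precisely $X$ itself, the classes $[\dij]$ (with unique generic point $[\eta(\dij)]$) for each pair $(i,j)$, and the closed points $[\bvar F]$ for each finite subgroup $F \leq G$.

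The only delicate condition is (1), local Noetherianness. Here I would invoke Lemma \ref{lem:affine} to produce, at every point of $X$, a TP-open neighborhood $U$ that is simultaneously a Zariski-affine open of $\X$; such a $U$ is the complement of two generating closed subsets $\dij \cup D_{rs}$ along different directions. The induced TP-topology on $U$ has only three strata: the unique codimension-zero generic point class, the codim-$1$ classes $[D_{kl}] \cap U$, and the codim-$2$ classes $[\bvar F] \cap U$. Any strictly descending chain of closed subsets of $U$ must therefore eventually stabilize, via a combinatorial argument on the finite collection of codimension-$1$ classes actually appearing in the chain together with the fact that $[\bvar F] \cap U$ is a single closed point once it meets $U$.

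With all five conditions verified, \cite[Ch.\ IV, Prop.\ 2.3 and 2.5]{hartshorne:residues} directly yields both the Cousin complex \eqref{eq:cc1} of $\tp\ox$ and the direct-sum decomposition \eqref{eq:ccdec} indexed by generic points of codimension-$n$ irreducible closed subsets. The main obstacle, as anticipated, is condition (1): one must carefully trade off the coarseness of the TP-topology against the Noetherianness criterion, ruling out infinite strictly descending chains of closed subsets despite the presence of countably many generating closed subsets $\dij$.
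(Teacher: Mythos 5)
Your overall plan is the right one, and it is worth noting that the paper gives no explicit proof of this proposition: beyond the sobriety discussion in Remark \ref{rem:sober}, the verification of Hypothesis \ref{hyp:cousin} for $\KQ(\tp \X)$ is left implicit and \eqref{eq:cc1}, \eqref{eq:ccdec} are quoted directly from \cite[Prop.\ 2.3 and 2.5]{hartshorne:residues}, so supplying that verification is exactly what is needed. Your treatment of conditions (2)--(5) is essentially fine; just note that the Kolmogorov quotient by itself only gives $T_0$ (uniqueness of generic points), so sobriety really does rest on your enumeration of the irreducible closed subsets (the whole space, the $\dij$, and the classes of the $\bvar F$) together with the existence of their generic points, and (3)--(4) use that a proper specialization strictly increases codimension, which is immediate from this three-level poset.

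The genuine soft spot is condition (1), and the combinatorial argument you gesture at does not yet close it. A priori a strictly descending chain of TP-closed subsets could run forever through codimension-two points alone: neither ``finitely many codimension-one classes appear in the chain'' nor ``each class of $\bvar F$ is a single closed point'' excludes a closed set containing infinitely many such classes, none lying under a codimension-one stratum contained in the set, which loses one point at each step. What rules this out is a finiteness statement you should make explicit: generating closed subsets along the same direction are disjoint, while for different directions $\dij \cap D_{kl}$ is finite, consisting of the (classes of the) $\bvar F$ with $F\leq H_i^j\cap H_k^l$, $n_i(F)=j$ and $n_k(F)=l$ (Lemma \ref{lem:fin}). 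Consequently any proper TP-closed subset $C\subsetneq \dij$ misses $\eta(\dij)$, hence is contained in a finite union of generating closed subsets not having $\dij$ as a term, hence is a finite set of closed points; and any proper TP-closed subset of $\KQ(\tp \X)$ is contained in a finite union of $\dij$'s, so it is a finite union of such strata together with finitely many closed points. Descending chains then stabilize, so the space is in fact Noetherian, not merely locally so (Lemma \ref{lem:affine} is not even needed for this step). With (1)--(5) in place, the citation of \cite[Prop.\ 2.3 and 2.5]{hartshorne:residues} delivers \eqref{eq:cc1} and \eqref{eq:ccdec} exactly as you say.
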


\begin{nota}
	Few words on the notation and definitions of Grothendieck. An underlined $\underline{H}$ is to signal that we are considering the cohomology sheaf. For an abelian group $M$, the sheaf $\iota_x(M)$ denotes the constant sheaf with value $M$ on the closure of the point $x$ (or the constant sheaf on the closed subset $D$ in case of $\iota_D(M)$). The cohomology $\h n x(\O)$ is a punctual invariant \cite[Variation 8]{hartshorne:residues}:
\end{nota}

\begin{definition}
	\label{rem:stalksupp}
	Fixed a point $x$ in a topological space, with closure $Z=\overline{\{x\}}$, define the functor $\F \mapsto \Gamma_x(\F)$, that associates to a sheaf $\F$ the subgroup of the stalk $\F_x$ of those germs that admits a representative supported in $Z$:
	\begin{equation}
		\label{eq:gammax}
		\Gamma_x(\F):=\{\alpha\in \F_x\mid \exists(s,U), [(s,U)]=\alpha, \supp(s)\subseteq Z \}.
	\end{equation}
	Then $\h nx(\_)$ is defined to be the $n$-th right derived functor of $\Gamma_x(\_)$. Recall that the support of a section $s$, $\supp(s)$, consists of those points for which the section is non-zero in the stalk: $s_x\neq 0$. 
\end{definition}
\begin{nota}
	To ease the notation when the point $x$ is the generic point of a closed subset $Z$ we will denote $\h nZ(\F):=\h n{\eta(Z)}(\F)$.
\end{nota}
\begin{rem}
	\label{rem:loccoh}
	if $Z=\overline{\{x\}}$ then by \cite[Variation 8]{hartshorne:residues}:
	\begin{equation}
		\label{eq:stalkiso}
		\h nx(\F)\cong (\underline H^n_Z(\F))_x.
	\end{equation}
\end{rem}
For the space $\tp \X$ notice $Z^3=\emptyset$ and
\begin{itemize}
	\item $Z^0 \setminus Z^1=\{\eta (\tp \X)\}$.
	\item $Z^1 \setminus Z^2=\{\eta (\dij)\}_{ij\geq 1}$.
	\item $Z^2 \setminus Z^3=\{\eta (\bvar F)\}_F$, where $F$ ranges over all finite subgroups of $G$.
\end{itemize}
Therefore we can write explicitly the Cousin complex of $\O=\tp \ox$ \eqref{eq:cc1} using the decomposition  \eqref{eq:ccdec}:
\begin{equation}
	\label{eq:cc2}
		\O \longrightarrow \iota_{\X}(\h 0{\X}(\O)) \xrightarrow{d_0} \bigoplus_{i,j\geq 1}\iota_{\dij} (\h 1{\dij}(\O)) \xrightarrow{d_1} \bigoplus_{F}\iota_F(\h 2F(\O)) \longrightarrow 0
\end{equation}
\begin{nota}
	We denote $\iota_F(\h 2 F(\O)):=\iota_{\bvar F}(\h 2{\bvar F}(\O))$.
\end{nota}

\subsection{The Cousin complex is a flabby resolution}
We want to show that the Cousin complex \eqref{eq:cc2} of $\O$ is a flabby resolution of $\O$. To do so we need first to discuss the support of sections in the TP-topology.

Let us start recalling the analogous definition of the support functor in commutative algebra: 
\begin{definition}
	\label{def:commsupp}
	Given a commutative ring $R$ and an ideal $I$ of $R$ for any $R$-module $M$, define the functor of elements supported in $I$:
	\begin{equation}
		\label{eq:gammaim}
		\Gamma_{I}(M):=\{s\in M\mid \exists n\geq 0, I^ns=0\}
	\end{equation}
	and its right derived functors $H_I^*(M)$, which are all $R$-modules.
\end{definition}

 This definition is analogous to the sheaf version of cohomology with support since these two cohomologies coincide on affine schemes. More precisely \cite[Theorem 2.3]{hartshorne:local}:
\begin{theorem}
	\label{lem:affineloc}
	Let $R$ be a Noetherian ring, $U=\Spec(R)$, $I$ a finitely generated ideal of $R$ with corresponding closed subset $V(I)$, and $M$ an $R$-module. Then
	\[
	H^*_{V(I)}(U,\widetilde M)\cong H^*_I(M).
	\]
\end{theorem}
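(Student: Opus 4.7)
The plan is to identify the two functors in cohomological degree zero, and then extend the identification to all degrees via a common injective resolution. In degree zero I would establish the natural isomorphism $\Gamma_I(M)\cong \Gamma_{V(I)}(\Spec(R),\widetilde{M})$ using the finite generation of $I$: write $I=(f_1,\ldots,f_k)$. The inclusion $\Gamma_I(M)\subseteq \Gamma_{V(I)}(U,\widetilde{M})$ is immediate since $I^N s=0$ forces $s_\mathfrak{p}=0$ for every $\mathfrak{p}\not\supseteq I$ (pick $f\in I\setminus \mathfrak{p}$ and note $f^N s=0$). For the reverse inclusion, if $\supp(\widetilde{s})\subseteq V(I)$ then $s$ vanishes in each localisation $M_{f_i}$, so $f_i^{n_i}s=0$ for some $n_i$, and for $N$ sufficiently large $I^N\subseteq (f_1^{n_1},\ldots,f_k^{n_k})$ kills $s$.

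To lift the identification to higher cohomology I would pick an injective resolution $0\to M\to E^\bullet$ in $R$-modules. Since $R$ is Noetherian, sheafification is exact on $R$-modules and commutes with arbitrary direct sums, so $\widetilde{E^\bullet}$ is an exact resolution of $\widetilde{M}$ by quasi-coherent sheaves. By definition $H^*_I(M)$ is the cohomology of $\Gamma_I(E^\bullet)$, which by the degree zero identification equals the cohomology of $\Gamma_{V(I)}(U,\widetilde{E^\bullet})$. The theorem will follow if each $\widetilde{E^n}$ is $\Gamma_{V(I)}(U,-)$-acyclic, because then the same complex also computes the sheaf-theoretic local cohomology.

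The main obstacle is precisely this acyclicity claim, and the cleanest route is to prove the strictly stronger statement that $\widetilde{E}$ is flasque on $\Spec(R)$ whenever $E$ is an injective $R$-module, since flasque sheaves have vanishing cohomology with support in any closed set. Here the Noetherian hypothesis is essential: by the Matlis structure theorem every injective $R$-module decomposes as $E=\bigoplus_\alpha E(R/\mathfrak{p}_\alpha)$, so one reduces flasqueness to each indecomposable summand $\widetilde{E(R/\mathfrak{p})}$, where it follows from the fact that $E(R/\mathfrak{p})$ is $\mathfrak{p}$-power torsion and its associated sheaf is supported on the irreducible closed subset $V(\mathfrak{p})$ with constant-type stalks above its generic point. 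The Noetherian hypothesis also guarantees that arbitrary direct sums of flasque quasi-coherent sheaves remain flasque, completing the acyclicity argument and yielding the claimed natural isomorphism.
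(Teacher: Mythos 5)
Your argument is correct, but note that the paper does not actually prove this statement: it is quoted verbatim from Hartshorne's \emph{Local Cohomology} notes (Theorem 2.3 there), and the only justification the text offers is exactly your degree-zero identification, namely that a section $s\in\Gamma(\Spec(R),\widetilde M)$ has support in $V(I)$ if and only if $I^n s=0$ for some $n$. So your proposal agrees with the paper on the one step it makes explicit, and then reconstructs the standard ingredients hidden behind the citation: choose an injective resolution $E^\bullet$ of $M$, use that $\widetilde E$ is flasque for $E$ an injective module over a Noetherian ring (this is Hartshorne, \emph{Algebraic Geometry}, III.3.4), and use that flasque sheaves are acyclic for $\Gamma_{V(I)}(\Spec(R),-)$, so the complex $\widetilde{E^\bullet}$ computes both sides at once; the degree-zero identification is natural, so the resulting isomorphism is too. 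Two places where your sketch compresses the real content are worth flagging. First, exactness of the tilde functor and its compatibility with direct sums hold over any ring; the Noetherian hypothesis is what you need for Matlis's decomposition of injectives and for direct sums of flasque sheaves on a Noetherian space to remain flasque. Second, for the indecomposable summand $E(R/\mathfrak p)$, support in $V(\mathfrak p)$ together with $\mathfrak p$-power torsion is not by itself enough for flasqueness: you also need that every $f\notin\mathfrak p$ acts invertibly on $E(R/\mathfrak p)$ (it is an $R_{\mathfrak p}$-module), which gives $\Gamma(U,\widetilde{E(R/\mathfrak p)})\cong E(R/\mathfrak p)$ for every open $U$ meeting $V(\mathfrak p)$ and $0$ otherwise, with identity restriction maps; in other words it is the pushforward of a constant sheaf on the irreducible closed set $V(\mathfrak p)$, hence flasque. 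With those points made explicit your proof is complete and is essentially the argument in the source the paper cites, so it buys a self-contained verification where the paper settles for a reference.
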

This is because the two support functors identify the same submodule of $M$: $s\in \Gamma(\Spec(R), \widetilde M)$ is a section with support in $V(I)$ if and only if $\exists n\geq 0$ such that $I^ns=0$. 

\begin{lemma}
	\label{lem:changesupp}
	Let $Z$ be an irreducible TP-closed subset. Then in a TP-open subset $U=\Spec(R)$ which is affine in the Zariski topology, for any quasi-coherent $\zar \ox$-module $\F$ the sections in $\F(U)$ with support in $Z$ are the same in both topologies:
	\[
	\zar {\Gamma_Z}(U, \F)=\tp {\Gamma_Z}(U, \phi_*\F).
	\]
	Therefore their right derived functors are isomorphic:
	\[
	H^*_Z(U, \F)\cong H^*_Z(U, \phi_*\F)
	\]
\end{lemma}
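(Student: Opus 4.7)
The plan is to first verify the equality of sections by a direct definitional argument, and then bootstrap to the equality of right derived functors using the exactness of $\phi_*$ on quasi-coherent sheaves.

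For the first statement, observe that a section $s\in\F(U)$ lies in $\Gamma_Z(U,-)$ in either topology precisely when its restriction to the complement $U\setminus Z$ vanishes. Since $Z$ is TP-closed, $U\setminus Z$ is a TP-open subset of $U$, and hence also a Zariski-open by continuity of $\phi$. For any TP-open $V$ we have $(\phi_*\F)(V)=\F(\phi^{-1}V)=\F(V)$, and the TP-restriction maps agree with the Zariski restriction maps of $\F$. Thus the vanishing condition $s|_{U\setminus Z}=0$ cuts out the same submodule of $\F(U)=(\phi_*\F)(U)$ in both topologies.

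For the equality of right derived functors, I would pick an injective resolution $\F\to I^\bullet$ in quasi-coherent $\zar\ox$-modules (which exists by Stacks Project, Tag 077K). By Corollary \ref{cor:exact}, $\phi_*$ is exact on quasi-coherent sheaves, so $\phi_*\F\to\phi_*I^\bullet$ remains exact; and as in the proof of Corollary \ref{cor:coh}, each $\phi_*I^k$ is an injective sheaf on $\tp\X$ (Iversen Corollary 4.13, using exactness of the left adjoint $\phi^{-1}$). Applying $\Gamma_Z(U,-)$ term by term in each topology to these two resolutions yields identical complexes of abelian groups by the first part of the lemma applied to each $I^k$. Passing to cohomology then delivers the asserted isomorphism $H^*_Z(U,\F)\cong H^*_Z(U,\phi_*\F)$.

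The main technical point in this approach is that $\phi_*$ must send a quasi-coherent injective to an object that computes $\tp H^*_Z$; this preservation-of-injectivity is exactly the point cited in the proof of Corollary \ref{cor:coh}. A cleaner alternative that avoids this subtlety is to use the Grothendieck composite spectral sequence for $\tp\Gamma_Z(U,-)\circ\phi_*$ as a functor from quasi-coherent Zariski sheaves to abelian groups: since $R^q\phi_*\F=0$ for $q>0$ whenever $\F$ is quasi-coherent (again by Corollary \ref{cor:exact}), the sequence collapses on the $q=0$ row and identifies $R^p\tp\Gamma_Z(U,\phi_*\F)$ with the derived functors of the composite, which by the first equality coincide with $\zar H^p_Z(U,\F)$.
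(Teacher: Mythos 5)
Your proof is correct, and its first half takes a genuinely different, more elementary route than the paper's. The paper proves the equality of sections by two containments at the level of stalks: the easy one (TP-stalks invert fewer elements than Zariski-stalks), and a harder one that uses quasi-coherence, affineness of $U$ and Theorem \ref{lem:affineloc} to rewrite $\zar {\Gamma_Z}(U,\F)$ as $\Gamma_{\I(U)}(\F(U))$, together with irreducibility of $Z$ to produce an inverted element of $\I(U)$ in each TP-stalk off $Z$. You instead use the purely formal identity $\Gamma_Z(U,\G)=\ker\bigl(\G(U)\to \G(U\setminus Z)\bigr)$, valid for any sheaf $\G$ and any closed $Z$ (a section has support in $Z$ iff it restricts to zero on the complement, by the locality axiom), combined with the fact that $\phi$ is the identity on underlying sets, so $(\phi_*\F)(V)=\F(V)$ with the same restriction maps on TP-opens and $U\setminus Z$ is open in both topologies; hence both sides are the kernel of one and the same map $\F(U)\to\F(U\setminus Z)$. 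This is the standard identity $\Gamma_Z(U,\phi_*\F)=\Gamma_{\phi^{-1}Z}(\phi^{-1}U,\F)$, and it needs neither quasi-coherence, nor irreducibility of $Z$, nor affineness of $U$ --- those hypotheses only enter the derived-functor statement. What the paper's longer argument buys is the explicit commutative-algebra characterization (supported in $Z$ iff killed by a power of $\I(U)$), which is reused in Lemma \ref{lem:commalg}; with your route that step would still require Theorem \ref{lem:affineloc} downstream. Your second half (push forward a quasi-coherent injective resolution, using Corollary \ref{cor:exact} and preservation of injectives, then compare the two complexes of sections termwise) is essentially the paper's argument. One small caution about your optional alternative: the vanishing $R^q\phi_*\F=0$ for $q>0$ does not follow verbatim from Corollary \ref{cor:exact}, since exactness of $\phi_*$ on the quasi-coherent subcategory is not literally the vanishing of its derived functors computed in the ambient category; one should instead argue via the affine TP-cover of Lemma \ref{lem:affine} and vanishing of quasi-coherent cohomology on affines. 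As that remark is not needed for your main argument, this is harmless.
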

\begin{proof}
	First let us prove the containment $\tp {\Gamma_Z}(U, \phi_*\F) \subseteq \zar {\Gamma_Z}(U, \F)$. If $s \in \F(U)$ is a section whose TP-support is contained in $Z$ then its Zariski-support is also contained in $Z$. If $x\in U\setminus Z$, by Remark \ref{rem:stalk}, modulo restricting $U$, the section $s$ is zero in the Zariski-stalk at $x$ since it is already zero in the TP-stalk at $x$ where less elements are inverted. 
	
	Let us prove the other containment $\zar {\Gamma_Z}(U, \F)\subseteq \tp {\Gamma_Z}(U, \phi_*\F)$. If $s\in \F(U)$ is a section whose Zariski-support is contained in $Z$, then by Lemma \ref{lem:affine} there exists $n\geq 0$ such that $\I(U)^ns=0$, where $\I$ is the ideal sheaf of $Z$. If $x\in U\setminus Z$, by Remark \ref{rem:stalk}, modulo restricting $U$, to obtain the TP-stalk at $x$ from $\F(U)$ we are inverting at least one element in $\I(U)$. If by absurd none of the elements in $\I(U)$ were inverted, then every $\dij$ containing $Z$ will also contain $x$, implying $x\in Z$ since $Z$ is irreducible in the TP-topology. In conclusion the section $s$ is zero in the TP-stalk at $x$ since $\I(U)^ns=0$ and at least one element of $\I(U)$ is inverted in the TP-stalk. 
	
	To show that their right derived functors are isomorphic pick an injective resolution of $\F$ in quasi-coherent $\zar \ox$-modules \cite[\href{https://stacks.math.columbia.edu/tag/077K}{Tag 077K}]{stacks-project}. The pushforward along $\phi$ of this injective resolution is an injective resolution of $\phi_*\F$ in $\tp \ox$-modules, since by Corollary \ref{cor:exact} $\phi_*$ is exact on quasi-coherent sheaves and preserves injectives.
\end{proof}
We recall \cite[Proposition 2.6]{hartshorne:residues} the equivalent definitions of Cohen-Macaulay sheaf with respect to a filtration $Z^n$ (We use as well the decomposition \eqref{eq:decompositions}):
\begin{proposition}
	\label{def:cohen}
	Under the Hypothesis \ref{hyp:cousin} for a sheaf of abelian groups $\F$ the following are equivalent:
	\begin{enumerate}
		\item $\underline{H}^i_{Z^n}(\F)=0$ for all $i\neq n$.
		\item $\underline{H}^i_{Z^n/Z^{n+1}}(\F)=\bigoplus_{x\in Z^n\setminus Z^{n+1}}\iota_x(\h i x(\F))=0$ for all $i\neq n$.
		\item The Cousin complex of $\F$ is a flabby resolution of $\F$.
	\end{enumerate}
	The sheaf $\F$ is said to be Cohen-Macaulay when it satisfies any of these equivalent conditions.
\end{proposition}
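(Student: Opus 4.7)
The plan is to deduce the equivalence of the three conditions from the standard long exact sequence of cohomology sheaves associated to each nested pair $Z^{n+1}\subseteq Z^n$ in the filtration, namely
\[
\cdots \to \underline{H}^i_{Z^{n+1}}(\F)\to \underline{H}^i_{Z^n}(\F)\to \underline{H}^i_{Z^n/Z^{n+1}}(\F)\to \underline{H}^{i+1}_{Z^{n+1}}(\F)\to\cdots
\]
combined with the direct-sum decomposition
\[
\underline{H}^i_{Z^n/Z^{n+1}}(\F)=\bigoplus_{x\in Z^n\setminus Z^{n+1}}\iota_x(\h{i}{x}(\F)),
\]
which is structural: it uses Hypothesis \ref{hyp:cousin}(2) and (4) (sobriety plus maximality under specialization on $Z^n\setminus Z^{n+1}$), so that this local cohomology is a coproduct of skyscrapers at the generic points, cf.\ \eqref{eq:stalkiso}. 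Granted this decomposition, the equality in (2) is automatic and (1), (2) become two different vanishing statements for the same long exact sequence.

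For (1)$\Rightarrow$(2): if the two outer terms $\underline{H}^i_{Z^n}(\F)$ and $\underline{H}^{i+1}_{Z^{n+1}}(\F)$ vanish in the expected ranges, the long exact sequence forces the middle term to vanish for $i\neq n$. For (2)$\Rightarrow$(1): argue by descending induction on $n$, using Hypothesis \ref{hyp:cousin}(5) (separation) together with Noetherianity to get $Z^n=\emptyset$ for $n$ large, which starts the induction trivially; the inductive step propagates the vanishing from $\underline{H}^i_{Z^{n+1}}(\F)$ and $\underline{H}^i_{Z^n/Z^{n+1}}(\F)$ back to $\underline{H}^i_{Z^n}(\F)$ via the same sequence.

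For (2)$\Leftrightarrow$(3): define the Cousin differential $d_n\colon \CC^n(\F)\to \CC^{n+1}(\F)$ as the composition of the connecting homomorphism $\underline{H}^n_{Z^n/Z^{n+1}}(\F)\to \underline{H}^{n+1}_{Z^{n+1}}(\F)$ with the natural map to $\underline{H}^{n+1}_{Z^{n+1}/Z^{n+2}}(\F)$, and check $d_{n+1}\circ d_n=0$ by splicing two consecutive long exact sequences. Each $\CC^n(\F)$ is a sum of constant sheaves on closed irreducible subsets and is therefore flabby. Under condition (2) the long exact sequences degenerate to short exact sequences
\[
0\to \underline H^n_{Z^n}(\F)\to \underline H^n_{Z^n/Z^{n+1}}(\F)\to \underline H^{n+1}_{Z^{n+1}}(\F)\to 0,
\]
and splicing these yields exactness of the augmented Cousin complex; the augmentation $\F\to \CC^0(\F)$ is handled by the isomorphism $\F\cong \underline H^0_{Z^0}(\F)$ coming from (1). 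Conversely, given a flabby resolution $\F\to \CC^\bullet(\F)$, its stupid truncation $\sigma^{\geq n}\CC^\bullet(\F)$ computes $\underline{H}^*_{Z^n}(\F)$, which is therefore concentrated in degree $n$, giving (1) and hence (2).

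The main obstacle is the bookkeeping for (2)$\Rightarrow$(3): one has to verify that the connecting maps $d_n$ assemble into a genuine resolution rather than just a complex, which amounts to splicing the short exact sequences obtained from (2) compatibly across all $n$. This is exactly the content of Grothendieck's construction in \cite[Chapter IV]{hartshorne:residues}, so the proof is really a matter of transcribing that formal machinery into the sober space $\tp \X$ with its codimension filtration.
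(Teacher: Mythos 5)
There is a genuine gap, and it sits exactly at the implication the paper later relies on. Your directions (1)$\Rightarrow$(2) and (3)$\Rightarrow$(1) are fine, but the passage out of condition (2) is circular. The long exact sequence of the pair $Z^{n+1}\subseteq Z^n$ reads
\[
\underline{H}^i_{Z^{n+1}}(\F)\to \underline{H}^i_{Z^n}(\F)\to \underline{H}^i_{Z^n/Z^{n+1}}(\F)\xrightarrow{\ \delta\ } \underline{H}^{i+1}_{Z^{n+1}}(\F)\to \underline{H}^{i+1}_{Z^n}(\F)\to \underline{H}^{i+1}_{Z^n/Z^{n+1}}(\F),
\]
and in your descending induction for (2)$\Rightarrow$(1) the step fails precisely in degree $i=n+1$: there condition (2) kills $\underline{H}^{n+1}_{Z^n/Z^{n+1}}(\F)$ but the inductive hypothesis leaves $\underline{H}^{n+1}_{Z^{n+1}}(\F)$ alive (it is the allowed degree), so the sequence only gives $\underline{H}^{n+1}_{Z^n}(\F)\cong \operatorname{cok}\bigl(\delta\colon \underline{H}^n_{Z^n/Z^{n+1}}(\F)\to \underline{H}^{n+1}_{Z^{n+1}}(\F)\bigr)$, and nothing in (2) makes $\delta$ surjective. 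The same problem is hidden in your claim that under (2) the long exact sequences ``degenerate to short exact sequences'' $0\to \underline H^n_{Z^n}(\F)\to \underline H^n_{Z^n/Z^{n+1}}(\F)\to \underline H^{n+1}_{Z^{n+1}}(\F)\to 0$: that degeneration needs the vanishing of $\underline H^n_{Z^{n+1}}(\F)$ and $\underline H^{n+1}_{Z^n}(\F)$, i.e.\ condition (1), which is what you are trying to prove. So as written neither (2)$\Rightarrow$(1) nor (2)$\Rightarrow$(3) is established, and (2)$\Rightarrow$(3) is the implication actually used in Corollaries \ref{cor:cohmac} and \ref{cor:exactness}. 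A secondary point: Hypothesis \ref{hyp:cousin}(1) and (5) do not by themselves give $Z^n=\emptyset$ for large $n$ (locally Noetherian spaces can have unbounded codimension); this happens to hold for $\tp\X$ (where $Z^3=\emptyset$), but the proposition is stated in the general setting.

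For comparison: the paper does not prove this statement at all, it is quoted verbatim from \cite[Proposition 2.6]{hartshorne:residues}, so you are attempting more than the paper does; your closing sentence in effect falls back on that same citation for the hard step. The standard way to close the loop without circularity is to prove (2)$\Rightarrow$(3) first, using the (locally convergent) spectral sequence of the filtration, $E_1^{p,q}=\underline{H}^{p+q}_{Z^p/Z^{p+1}}(\F)\Rightarrow \underline{H}^{p+q}_{Z^0}(\F)$, whose $E_1$-row with its $d_1$ differential is exactly the Cousin complex and whose abutment is $\F$ concentrated in degree $0$ (since $\Gamma_{Z^0}=\Id$); condition (2) concentrates the page in the row $q=0$, collapse gives exactness of the augmented Cousin complex, and flabbiness of each $\CC^n(\F)$ follows from the decomposition \eqref{eq:ccdec} into constant sheaves on irreducible closed sets. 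Then run the cycle (3)$\Rightarrow$(1)$\Rightarrow$(2), where your truncation argument for (3)$\Rightarrow$(1) works once you record that $\underline{\Gamma}_{Z^n}(\CC^p(\F))=\CC^p(\F)$ for $p\geq n$ and $=0$ for $p<n$, and that flabby sheaves are acyclic for $\underline{\Gamma}_{Z^n}$.
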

\begin{corollary}
	\label{cor:cohmac}
	If $\F$ is a quasi-coherent $\zar {\ox}$-module Cohen-Macaulay with respect to the codimension filtration in $\zar \X$, then $\phi_*\F$ is Cohen-Macaulay with respect to the codimension filtration in $\tp \X$.
\end{corollary}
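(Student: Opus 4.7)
The plan is to verify condition~(2) of Proposition \ref{def:cohen} for the TP-sheaf $\phi_*\F$: for every TP-generic point $x$ of codimension $n$ I want $\h i x(\phi_*\F)=0$ for all $i\neq n$. The three cases of such $x$ are enumerated right after Proposition \ref{prop:cousincomplex}, corresponding to $x=\eta(\tp\X)$, $x=\eta(\dij)$, and $x=\eta(\bvar F)$ in codimensions $0$, $1$, $2$ respectively.

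First I would rewrite the local cohomology as a stalk: by \eqref{eq:stalkiso}, $\h i x(\phi_*\F)=(\underline H^i_Z(\phi_*\F))_x$ with $Z=\overline{\{x\}}^{\mathrm{TP}}$, and this is the colimit of $H^i_Z(V,\phi_*\F)$ over TP-open neighborhoods $V$ of $x$. Lemma \ref{lem:affine} lets me restrict to $V$ that are Zariski-affine, and on such $V$ Lemma \ref{lem:changesupp} identifies $H^i_Z(V,\phi_*\F)$ with the Zariski local cohomology $H^i_{Z\cap V}(V,\F)$. The codimension-$0$ case is then immediate because $Z\cap V=V$ and $H^i(V,\F)=0$ for $i>0$ by Serre vanishing of a quasi-coherent sheaf on an affine.

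The codimension-$1$ and codimension-$2$ cases share the same structure. The TP-irreducible $Z$ is Zariski-reducible but its Zariski-components are pairwise disjoint ($\dij=\bigsqcup_{P\in \c\gen j}D_{i,P}$ for $n=1$, and $\bvar F$ is a finite disjoint set of closed points for $n=2$). Noetherianity of $V$ forces any section supported in $Z\cap V$ to be supported in finitely many components, yielding a direct sum decomposition
\[
H^i_{Z\cap V}(V,\F)=\bigoplus_\alpha(\underline H^i_{C_\alpha}(\F))(V)
\]
over the Zariski-irreducible components $C_\alpha$ of $Z\cap V$, each of codimension $n$.

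The main obstacle is concluding the vanishing of each summand for $i\neq n$: a direct appeal to the Zariski-Cohen-Macaulayness of $\F$ only supplies a zero stalk at $\eta(C_\alpha)$, and TP-open neighborhoods of $x$ are not cofinal among Zariski-open neighborhoods of $\eta(C_\alpha)$ because a TP-open can only delete torsion points from $C_\alpha$. I would therefore invoke the stronger global vanishing $\underline H^i_{C_\alpha}(\F)=0$ for $i\neq n$, which follows because $\X$ is smooth and each $C_\alpha$ is a locally complete intersection of the correct codimension (a Cartier divisor when $n=1$; a regular closed point when $n=2$). Its ideal sheaf is locally generated by a regular sequence of length $n$, so the depth/regular-sequence characterisation of Cohen-Macaulay modules forces the local cohomology to be concentrated in degree $n$ at every stalk. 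Granted this, each summand is zero, the colimit is zero, and the required vanishing of $\h i x(\phi_*\F)$ holds.
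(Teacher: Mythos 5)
Your reduction coincides with the paper's: verify condition (2) of Proposition \ref{def:cohen} for $\phi_*\F$, rewrite $\h ix(\phi_*\F)$ as a stalk via \eqref{eq:stalkiso}, compute the stalk over TP-opens that are Zariski affine (Lemma \ref{lem:affine}), and transfer to Zariski local cohomology by Lemma \ref{lem:changesupp}. The trouble is the last step. The ``obstacle'' you diagnose is not actually there: Cohen--Macaulayness of $\F$ for the codimension filtration is a \emph{sheaf-level} hypothesis, not a statement about the single stalk at $\eta(C_\alpha)$. Condition (2) says $\h iy(\F)=0$ for \emph{every} point $y$ and every $i\neq \Codim(y)$, equivalently (condition (3)) the Zariski Cousin complex of $\F$ is a flabby resolution; taking sections with support in the codimension-$n$ closed set $Z\cap V$ of that resolution (its terms in degrees $p<n$ are sums of constant sheaves on closures of codimension-$p$ points, which admit no nonzero sections supported in a codimension-$n$ subset) yields $H^i_{Z\cap V}(V,\F)=0$ for all $i<n$ and \emph{every} Zariski open $V$. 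That is exactly what the colimit over TP-affine neighbourhoods needs; no cofinality of neighbourhood systems enters. This is how the paper concludes, quoting condition (1) for $i<n$ and disposing of $i>n$ by \cite[Lemma 2.4]{hartshorne:residues} (your remark that the ideal of $Z$ is locally generated by $n$ elements would serve equally well for that easy range). The decomposition into Zariski components $C_\alpha$ is likewise unnecessary, though harmless.

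The substitute argument you give in place of the direct appeal is a genuine gap. The vanishing $\underline H^i_{C_\alpha}(\F)=0$ for $i\neq n$ cannot ``follow because $\X$ is smooth and each $C_\alpha$ is a locally complete intersection'': these are properties of the ambient surface and of $C_\alpha$, not of $\F$, and the assertion is false for a general quasi-coherent $\F$ --- for instance, if $\F$ contains a skyscraper summand at a closed point of $C_\alpha$ then $H^0_{C_\alpha}(\F)\neq 0$ although $n=1$. A regular sequence of length $n$ cutting out $C_\alpha$ only bounds local cohomology \emph{above} degree $n$; the essential lower vanishing $i<n$ must use the hypothesis that $\F$ itself is Cohen--Macaulay for the codimension filtration (that hypothesis is precisely the depth input), and since $\F$ is merely quasi-coherent you cannot invoke the depth-equals-dimension characterisation of finitely generated Cohen--Macaulay modules at stalks without further justification. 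As written, your proof never uses the Cohen--Macaulay hypothesis on $\F$ at the one place where it is indispensable, so the key vanishing is unproved.
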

\begin{proof}
	We prove that for the sheaf $\phi_*(\F)$ condition (2) of Proposition \ref{def:cohen} is satisfied. Since the Hypothesis \ref{hyp:cousin} are satisfied then by \cite[Lemma 2.4]{hartshorne:residues} we only need to prove condition (2) when $i<n$ since for $i>n$ is automatically satisfied. Therefore we need to show that for every $x\in \tp \X$ with closure $Z$ and $i<\Codim(x)$ we have:
	\begin{equation}
		\label{eq:hix}
		\h ix(\phi_*\F)\cong (\underline H^i_Z(\phi_*\F))_x = 0
	\end{equation}
	where the first isomorphism is \eqref{eq:stalkiso}.
	By Lemma \ref{lem:affine} the TP-stalk \eqref{eq:hix} can be computed using TP-open subsets which are open affines in the Zariski topology, and given such an open $U$, using Lemma \ref{lem:changesupp}:
	\[
	\underline H^i_Z(\phi_*\F)(U)=H^i_Z(U,\phi_*(\F))= H^i_Z(U,\F)=0.
	\]
	This equals zero since $\F$ is Cohen-Macaulay with respect to the codimension filtration in the scheme $\zar \ox$, therefore condition (1) of Proposition \ref{def:cohen} is satisfied and $i<\Codim(Z)$.
\end{proof}
\begin{corollary}
	\label{cor:exactness}
	The Cousin complex \eqref{eq:cc1} of $\O$ is a Flabby resolution of $\O$.
\end{corollary}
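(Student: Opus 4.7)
The plan is to reduce the statement to a classical fact about smooth (hence Cohen-Macaulay) varieties, and then transport it through the change of topology using the machinery built in this section.

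First I would apply Corollary \ref{cor:cohmac}: since $\O = \tp{\ox} = \phi_*(\zar{\ox})$, it suffices to show that the Zariski structure sheaf $\zar{\ox}$ is Cohen-Macaulay with respect to the codimension filtration on $\zar{\X}$. This reduces the problem from the somewhat exotic TP-topology to a statement on the familiar scheme $\zar{\X}$.

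Next I would invoke smoothness. The variety $\zar{\X} = \c \times \c$ is a product of two smooth curves over $\C$, hence a smooth surface. In particular every local ring $\O_{\X,x}$ is regular, hence Cohen-Macaulay as a ring. This implies that $\zar{\ox}$ satisfies condition (1) of Proposition \ref{def:cohen} with respect to the codimension filtration: for every irreducible closed subset $Z \subseteq \zar{\X}$ with generic point $x$ and every $i < \Codim(Z) = \Dim \O_{\X,x}$, the local cohomology sheaf $\underline{H}^i_Z(\zar{\ox})$ vanishes, because the stalk $\h{i}{x}(\zar{\ox})$ computes (by \cite[Theorem 2.3]{hartshorne:local}, i.e. the affine version already recalled in Theorem \ref{lem:affineloc}) a local cohomology module $H^i_{m_x}(\O_{\X,x})$ of a Cohen-Macaulay local ring below its depth, which is zero. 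By Lemma \ref{lem:changesupp} this information is already enough to also see the vanishing on affine opens directly, so conditions (1), (2) of Proposition \ref{def:cohen} hold and $\zar{\ox}$ is Cohen-Macaulay.

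Then Corollary \ref{cor:cohmac} immediately yields that $\O = \phi_*(\zar{\ox})$ is Cohen-Macaulay with respect to the codimension filtration on $\tp{\X}$ (after passing to the Kolmogorov quotient, which as highlighted at the start of the section does not affect any stalk-, sheaf-, or support-level statement). Finally, invoking the equivalent condition (3) of Proposition \ref{def:cohen}, being Cohen-Macaulay means exactly that the Cousin complex \eqref{eq:cc1} is a flabby resolution of $\O$, which is what we wanted.

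I do not expect any real obstacle here: the only subtle point is the passage between topologies, but this has already been done in Lemma \ref{lem:changesupp} and Corollary \ref{cor:cohmac}. Everything else is the classical fact that smooth implies Cohen-Macaulay together with the standard vanishing of local cohomology of Cohen-Macaulay rings below the depth.
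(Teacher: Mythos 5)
Your proof is correct and takes essentially the same route as the paper: reduce via Corollary \ref{cor:cohmac} to Cohen--Macaulayness of $\zar{\ox}$ with respect to the codimension filtration on $\zar{\X}$, then invoke condition (3) of Proposition \ref{def:cohen}. The only difference is that where the paper simply cites \cite[Example pg. 239]{hartshorne:residues} for the Zariski input, you derive it from smoothness of $\X$ (regular local rings are Cohen--Macaulay, so the punctual local cohomology $H^i_{m_x}(\oxx x)$ vanishes for $i<\Codim$), which is a correct unfolding of that citation; your passing appeal to Lemma \ref{lem:changesupp} at that point is superfluous but harmless, since that statement is purely about the Zariski topology.
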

\begin{proof}
	The structure sheaf $\zar {\ox}$ is Cohen-Macaulay with respect to the codimension filtration in $\zar \X$ \cite[Example pg. 239]{hartshorne:residues}, therefore by Corollary \ref{cor:cohmac} its pushforward $\phi_*(\zar {\ox})=\O$ is Cohen-Macaulay with respect to the codimension filtration in $\tp \X$. This means that $\O$ satisfies condition (3) of Proposition \ref{def:cohen} and its Cousin complex is a Flabby resolution of $\O$.
\end{proof}

\subsection{Explicit description of the Cousin complex}

We want to give an explicit description of the local cohomology terms appearing in the Cousin complex of $\O$. For this task let us extend Theorem \ref{lem:affineloc} to the TP-topology.

\begin{lemma}
	\label{lem:commalg}
	Let $x$ be a point in $\tp \X$ with TP-closure $Z$, and $\I$ be the $\zar \ox$-ideal sheaf associated to $Z$. The ideal $m:=(\phi_*\I)_x$ is a well defined ideal of the ring $\tp {\oxx x}$. Then the two local cohomology functors
	\begin{equation}
		\label{eq:loccohfunctor}
		\Gamma_x(\phi_*\F)=\Gamma_m((\phi_*\F)_x)
	\end{equation}
	agree on pushforward of quasi-coherent $\zar \ox$-modules $\F$. As a consequence also their right derived functors agree on the same class:
	\begin{equation}
		\label{eq:rightderived1}
		\h *x(\phi_*\F)\cong H^*_m((\phi_*\F)_x)
	\end{equation}
\end{lemma}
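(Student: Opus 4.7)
The plan is to reduce everything to a Zariski-affine TP-open neighborhood and then invoke the algebraic-versus-sheaf identification of local cohomology provided by Theorem \ref{lem:affineloc} together with the agreement of supports established in Lemma \ref{lem:changesupp}. First I would check that $m$ is a genuine ideal of $\tp{\oxx x}$: the inclusion $\I \hookrightarrow \zar{\ox}$ pushes forward to an injection $\phi_*\I \hookrightarrow \tp{\ox}$ since $\phi_*$ is left exact, and taking TP-stalks at $x$ yields the desired ideal.

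For the level-zero identity, I would exploit Lemma \ref{lem:affine} to extract a cofinal system $\{U_\alpha\}$ of TP-opens containing $x$, each of which is a Zariski affine $U_\alpha=\Spec A_\alpha$. Writing $M_\alpha:=\F(U_\alpha)$ and letting $I_\alpha\subset A_\alpha$ be the ideal cutting out $Z\cap U_\alpha$, the stalk is the filtered colimit $N:=(\phi_*\F)_x=\colim_\alpha M_\alpha$ with $m=\colim_\alpha I_\alpha\cdot R$. Applying Lemma \ref{lem:changesupp} and (the degree-zero case of) Theorem \ref{lem:affineloc} on each $U_\alpha$ gives
\[
\tp{\Gamma_Z}(U_\alpha,\phi_*\F)=\zar{\Gamma_Z}(U_\alpha,\F)=\Gamma_{I_\alpha}(M_\alpha).
\]
Taking the filtered colimit, the left-hand side becomes $\Gamma_x(\phi_*\F)$ by Definition \ref{rem:stalksupp}, while the right-hand side becomes $\Gamma_m(N)$ because an element of $N$ is killed by a power of $m$ iff some representative is killed by a power of some $I_\alpha$.

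To upgrade to derived functors, I would apply the same strategy cohomologically. By Remark \ref{rem:loccoh},
\[
\h *x(\phi_*\F)\cong(\underline H^*_Z(\phi_*\F))_x=\colim_\alpha H^*_Z(U_\alpha,\phi_*\F),
\]
and Lemma \ref{lem:changesupp} together with Theorem \ref{lem:affineloc} identifies each term with $H^*_{I_\alpha}(M_\alpha)$. Using that local cohomology at a finitely generated ideal commutes with filtered colimits of modules, visible either from the \v{C}ech/Koszul model on generators of $I_\alpha$ or from the presentation $H^n_I(M)=\varinjlim_k\Ext^n_A(A/I^k,M)$, the colimit assembles into $H^*_m(N)$, giving the claimed isomorphism \eqref{eq:rightderived1}.

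The only subtle point I expect is verifying that the cofinal system $\{U_\alpha\}$ is rich enough so that the ideals $I_\alpha$ and modules $M_\alpha$ do assemble into $m$ and $N$ respectively under the colimit, and that the passage to the colimit in the derived setting is actually the derived functor. Both rest on flatness of localization and the compatibility of local cohomology with flat base change, which is standard once one has chosen the cofinal system provided by Lemma \ref{lem:affine}.
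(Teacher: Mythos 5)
Your argument is correct, and its degree-zero half is essentially the paper's: both reduce to a Zariski-affine TP-open via Lemma \ref{lem:affine} and then combine Lemma \ref{lem:changesupp} with Theorem \ref{lem:affineloc}, with finite generation of $m$ doing the real work of moving a power of the ideal between the stalk and an honest open (the paper phrases this as two containments of germs, you as a filtered colimit over a cofinal system of affine TP-opens; do state the finite-generation point explicitly where you claim that an element of $N$ killed by a power of $m$ has a representative killed by a power of some $I_\alpha$, exactly as the paper does). Where you genuinely diverge is the derived statement. The paper does not return to the local cohomology sheaf at all: it takes a quasi-coherent injective resolution of $\F$, pushes it forward using that $\phi_*$ is exact on quasi-coherent sheaves and preserves injectives (Corollary \ref{cor:exact}), takes the TP-stalk at $x$, and observes that the two level-zero functors applied to this one resolution give literally the same complex; this hinges on the assertion that the stalk of an injective $\tp \ox$-module is an injective $\tp {\oxx x}$-module. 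Your route instead goes through $\h{*}{x}(\phi_*\F)\cong(\underline H^*_Z(\phi_*\F))_x$ (Remark \ref{rem:loccoh}), the derived half of Lemma \ref{lem:changesupp} on each affine, Theorem \ref{lem:affineloc}, and commutation of algebraic local cohomology with the colimit. That buys you independence from the stalks-of-injectives point, at the price of two bookkeeping checks worth recording: first, the cofinality of affine TP-opens among all TP-opens containing $x$ (the paper uses the same cofinality when computing TP-stalks, so this is shared, but it is slightly more than the literal statement of Lemma \ref{lem:affine}: one shrinks a basic open by deleting finitely many further $\dij$ in at least two directions missing $x$); second, in your colimit both the ring and the module vary, so \enquote{local cohomology at a finitely generated ideal commutes with filtered colimits of modules} is not quite the statement you need --- fix generators of $I_{\alpha_0}$ on one affine, note that they generate $I_\beta$ on every smaller affine and generate $m$ in the stalk, and compare the \v{C}ech (stable Koszul) complexes, or equivalently invoke flat base change along the localizations, also checking that these identifications are compatible with the restriction maps. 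With those two points spelled out your proof is complete and is a legitimate alternative to the paper's resolution argument.
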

\begin{proof}
	Let us first prove the containment $\Gamma_x(\phi_*\F)\subseteq \Gamma_m((\phi_*\F)_x)$. If $\alpha\in \Gamma_x(\phi_*\F)$, then by definition of $\Gamma_x$ \eqref{eq:gammax} there exists a TP-open $U$ (that by Lemma \ref{lem:affine} we can take to be affine for the Zariski-topology) and a section $s\in \F(U)$ representing the germ $\alpha$ such that $\tp \supp(s)\subseteq Z$.
	Therefore
	\begin{equation}
		\label{eq:equality}
		s\in \tp {\Gamma_Z}(U, \phi_*\F)=\zar {\Gamma_Z}(U, \F)=\Gamma_{\I(U)}(\F(U))
	\end{equation}
	where the first equality is Lemma \ref{lem:changesupp} and the second one is Theorem \ref{lem:affineloc}. By definition of $\Gamma_{\I(U)}$ \eqref{eq:gammaim} there is $n\geq 0$ such that $\I(U)^ns=0$. By Remark \ref{rem:stalk} simply invert the appropriate elements to obtain the equality $m^n\alpha=0$ in the TP-stalk at $x$.  
	
	Let us prove the other containment $\Gamma_m((\phi_*\F)_x)\subseteq \Gamma_x(\phi_*\F)$. If $\alpha\in \Gamma_{m}((\phi_*\F)_x)$, there exists $n\geq 0$ such that $m^n\alpha=0$. The ideal $m$ is finitely generated, therefore also $m^n$ is finitely generated as well. This implies that we can find a TP-open $U$ affine for the Zariski topology containing $x$ and a section $s$ representing the germ $\alpha$ such that $\I(U)^ns=0$. Using again the chain of equalities \eqref{eq:equality} the TP-support of $s$ is contained in $Z$ and since $(s,U)$ represents the germ $\alpha$ we obtain $\alpha\in \Gamma_x(\phi_*\F)$.
	
	To prove \eqref{eq:rightderived1} pick an injective resolution of $\F$ in quasi-coherent $\zar \ox$-modules \cite[\href{https://stacks.math.columbia.edu/tag/077K}{Tag 077K}]{stacks-project}. The pushforward along $\phi$ of this injective resolution is an injective resolution of $\phi_*\F$ in $\tp \ox$-modules, since by Corollary \ref{cor:exact} $\phi_*$ is exact on quasi-coherent sheaves and preserves injectives. The TP-stalk at $x$ of this last resolution is an injective resolution of $(\phi_*\F)_x$ in $\tp {\oxx x}$-modules, since taking the stalk preserves injectives. By the equality \eqref{eq:loccohfunctor} just proven we obtain the isomorphism between the respective right-derived functors \eqref{eq:rightderived1}.
\end{proof}
\begin{corollary}
	For every $ij\geq 1$ we have the isomorphism
	\begin{equation}
		\label{eq:h1dijiso}
		\h 1{\dij} (\O)\cong \quotient {\K}{\O_{\dij}}.
	\end{equation}
	With $\K$ and $\odij$ defined in \eqref{eq:k}, and \eqref{eq:dij}.
\end{corollary}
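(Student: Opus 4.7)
The plan is to reduce the sheaf-theoretic local cohomology to a purely commutative-algebraic computation by invoking Lemma \ref{lem:commalg}, and then to exploit the fact that the maximal ideal at the generic point of $\dij$ is principal.

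First I would apply Lemma \ref{lem:commalg} directly to $\F = \zar\ox$, so that $\O=\phi_*\zar\ox$ falls inside the class on which the lemma is valid. Taking $x = \eta(\dij)$, whose TP-closure is $\dij$, the lemma yields the isomorphism
\[
\h 1{\dij}(\O) \;\cong\; H^1_{m_{ij}}(\odij),
\]
where $\odij$ is the TP-stalk described in \eqref{eq:odij} and $m_{ij}$ is the ideal of those elements of $\odij$ vanishing at $\dij$ (as defined immediately after \eqref{eq:odij}). This reduces the problem to computing a derived functor of $\Gamma_{m_{ij}}$ on the module $\odij$.

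Next I would use the explicit uniformizer built in Section \ref{sec:2}: by the remark following the definition of $\tij$ (see \eqref{eq:tij}), the element $\tij$ lies in $\odij$ and generates $m_{ij}$, so $m_{ij}=(\tij)$ is principal. For a principal ideal $(t)$ in any commutative ring $R$, the local cohomology groups $H^*_{(t)}(R)$ are computed by the two-term \v{C}ech complex
\[
0 \;\longrightarrow\; R \;\longrightarrow\; R[t^{-1}] \;\longrightarrow\; 0,
\]
so $H^1_{(t)}(R) \cong R[t^{-1}]/R$. Applied to $R=\odij$ and $t=\tij$, this gives $H^1_{m_{ij}}(\odij) \cong \odij[\tij^{-1}]/\odij$.

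The only remaining point is to identify $\odij[\tij^{-1}]$ with $\K$. By \eqref{eq:odij}, $\odij$ consists of those $f\in\K$ which are regular at $\dij$; by construction $\tij$ vanishes to first order at $\dij$ and is already invertible away from $\dij$ inside $\K$, so inverting $\tij$ simply lifts the regularity condition at $\dij$ and produces all of $\K$. Concretely, any $f\in \K$ has, at the DVR $\odij$, a finite order pole along $\dij$, hence $\tij^{N}f\in\odij$ for some $N\geq 0$, giving $f\in\odij[\tij^{-1}]$; conversely every element of $\odij[\tij^{-1}]$ lies in $\K$. Combining these steps yields
\[
\h 1{\dij}(\O) \;\cong\; H^1_{m_{ij}}(\odij) \;\cong\; \odij[\tij^{-1}]/\odij \;\cong\; \K/\odij,
\]
as claimed. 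The only mildly subtle point is the identification $\odij[\tij^{-1}]=\K$, but this is immediate from the explicit description \eqref{eq:k}--\eqref{eq:odij} together with the fact that $\tij$ is a uniformizer for $\dij$.
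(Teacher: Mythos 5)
Your proof is correct and follows essentially the same route as the paper: reduce via Lemma \ref{lem:commalg} to $H^1_{m_{ij}}(\odij)$, compute it with the stable Koszul (two-term \v{C}ech) complex on the principal generator $\tij$, and identify $\odij[\tij^{-1}]$ with $\K$ — the only difference is that you spell out this last identification, which the paper leaves implicit. One tiny imprecision: for $j\geq 2$ the set $\dij$ has several irreducible components $\dip$, so $\odij$ is semilocal rather than a DVR, but your argument still works since $\tij$ vanishes to first order along every component, so $\tij^{N}f\in\odij$ for $N$ the maximal pole order over the components.
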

\begin{proof}
	Notice the chain of isomorphisms:
	\[
	\h 1{\dij} (\O)\cong H^1_{m_{ij}}(\O_{\dij}) \cong \frac {\O_{\dij}[\tij^{-1}]}{\O_{\dij}} = \frac {\K}{\O_{\dij}}
	\]
	where the first isomorphism is Lemma \ref{lem:commalg}, and the second one is the computation of local cohomology by means of the stable Koszul complex (see for example \cite{huneke:lectures}), since $\tij$ defined in \eqref{eq:tij} generates the principal ideal $\mij$ of those vanishing at $\dij$. 
\end{proof}
\begin{proposition}
	\label{prop:ccstalk}
	Let $F$ be a finite subgroup of $G$ and $x=\eta (\bvar F)$ be the generic (and only) point of $\bvar F$ in the Kolmogorov quotient $\tp \X$. Then the TP-stalk at $x$ of the Cousin complex \eqref{eq:cc2} is
	\begin{equation}
		\label{eq:ccstalk}
		\O_F \rightarrowtail \K \xrightarrow{d_0} \bigoplus_{i\geq 1} \quotient {\K}{\O_{D_{i,n_i}}} \xrightarrow{d_1} \h 2F(\O) \to 0.
	\end{equation}
	Moreover the sequence \eqref{eq:ccstalk} is an exact sequence of $\O_F=\O_x$-modules.
\end{proposition}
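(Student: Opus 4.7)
The plan is to compute the stalk at $x = \eta(\bvar F)$ of each term appearing in the Cousin complex \eqref{eq:cc2} and then invoke the flabby resolution statement to get exactness for free. Since the stalk functor is exact and the source term $\O$ has stalk $\O_F$ by \eqref{eq:of}, it suffices to identify the stalks of the three constant-sheaf pieces.

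First I would handle each term $\iota_?$ separately. For the degree zero piece $\iota_{\X}(\h 0{\X}(\O))$, the point $x$ lies in $\X$, so the stalk is simply the value $\h 0{\X}(\O)$, which via Remark \ref{rem:loccoh} and \eqref{eq:k} is $\K$. For the degree one piece $\bigoplus_{i,j\geq 1}\iota_{\dij}(\h 1{\dij}(\O))$, the stalk of $\iota_{\dij}(M)$ at $x$ is $M$ if $x\in \dij$ and $0$ otherwise (as $\dij$ is closed). By Lemma \ref{lem:fin}, for each direction $i$ there is exactly one index, namely $j=n_i$, such that $x \in \dij$. Combined with the identification \eqref{eq:h1dijiso}, this gives the middle term $\bigoplus_{i\geq 1}\K/\O_{D_{i,n_i}}$. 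For the degree two piece $\bigoplus_{F'}\iota_{F'}(\h 2{F'}(\O))$, Lemma \ref{lem:disj} guarantees that $x \in \bvar{F'}$ only when $F'=F$, so only the summand indexed by $F$ contributes, yielding the stalk $\h 2F(\O)$.

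Putting the four stalk computations together gives precisely the sequence \eqref{eq:ccstalk}, with the maps induced by $d_0$ and $d_1$. The injectivity at the left end (the symbol $\rightarrowtail$) and the exactness of the whole four-term sequence follow at once from Corollary \ref{cor:exactness}: the Cousin complex is a flabby, hence in particular exact, resolution of $\O$, so the augmented sheaf sequence
\begin{equation*}
0 \to \O \to \CC^0(\O) \to \CC^1(\O) \to \CC^2(\O) \to 0
\end{equation*}
is exact, and the stalk functor at $x$ is exact.

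I do not expect any real obstacle here since all the hard work has been isolated into the earlier lemmas. The only subtlety worth stating carefully is that we are working on the Kolmogorov quotient of $\tp \X$ (as flagged at the start of Section \ref{sec:3}), so that $\eta(\bvar F)$ is a genuine single point whose closure is $\bvar F$; this is what makes the stalk-of-constant-sheaf computations above literally correct and lets us apply Lemma \ref{lem:fin} and Lemma \ref{lem:disj} to kill the irrelevant summands.
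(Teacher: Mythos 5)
Your proposal is correct and follows essentially the same route as the paper: compute the stalk of each graded piece of \eqref{eq:cc2} at $x$ (using \eqref{eq:k}, Lemma \ref{lem:fin} with \eqref{eq:h1dijiso}, and Lemma \ref{lem:disj}), then deduce exactness from Corollary \ref{cor:exactness} by exactness of the stalk functor. Your explicit remark about working in the Kolmogorov quotient is a nice clarification but adds nothing beyond what the paper already assumes.
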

\begin{proof}
	First of all $(CC^0(\O))_x=\K$ simply computing
	\begin{equation}
		\label{eq:h0description}
		\h 0{\X}(\O)=\O_{\eta(\tp \X)}=\K.
	\end{equation}

	The next term is $(CC^1(\O))_x = \bigoplus_{i\geq 1} \quotient {\K}{\O_{D_{i,n_i}}}$, since by Lemma \ref{lem:fin} for every $i\geq 1$ the only $\dij$ containing $\bvar F$ is $D_{i,n_i}$, and the local cohomology is described in \eqref{eq:h1dijiso}.
	
	The last term is $(CC^2(\O))_x=\h 2F(\O)$ since by Lemma \ref{lem:disj} if $F'\neq F$ is another finite subgroup of $G$, then $\bvar {F'}$ and $\bvar F$ are disjoint.
	
	The sequence of $\O_F$ modules \eqref{eq:ccstalk} is exact since by Corollary \ref{cor:exactness} the Cousin complex of $\O$ is a flabby resolution of $\O$.
\end{proof}
 
We can use exactness of \eqref{eq:ccstalk} to explicitly describe also the last local cohomology term:
\begin{equation}
 	\label{eq:h2description}
 	\h 2F(\O)\cong \quotient {(\bigoplus_{i\geq 1} \quotient {\K}{\O_{D_{i,n_i}}})}{\K}.
\end{equation}

We conclude the section considering the global sections of the Cousin complex \eqref{eq:cc2}, which will provide all the geometric inputs needed later in the construction of $E\c_G$:
\begin{equation}
	\label{eq:cc3}
	\Gamma(\O) \longrightarrow \K \xrightarrow{d_0} \bigoplus_{i\geq 1} (\bigoplus_{j\geq 1} \K/\odij) \xrightarrow{d_1} \bigoplus_{F} \h 2F(\O) \rightarrow 0
\end{equation}

\section{The main construction}
\label{sec:4}
We are now ready to construct $E\c_G$. Recall that $G=\T^2$ is the 2-torus, and that we have fixed an elliptic curve $\c$ over $\C$ together with a coordinate $t_e\in \tp \O_{\c,e}\subset \O_{\c,e}$ (Choice \ref{choice:te}). We will construct $E\c_G\in d\A(G)$ from an exact sequence of injective objects in $\A(G)$:
\begin{equation}
	\label{eq:injres0}
	\begin{tikzcd}
	\inj_0 \arrow[r, "\phi_0"] & \inj_1 \arrow[r, "\phi_1"] & \inj_2 \arrow[r, "0"] & 0.
	\end{tikzcd}
\end{equation}
\begin{definition}
	\label{def:mainobj}
	Since the map $\phi_0$ factors through a map $\tilde {\phi_0}:\inj_0 \to \Fib(\phi_1)$ (where $\Fib(\phi_1)$ is the fibre of $\phi_1$ in $d\A(G)$), we can define $E\c_{G}:=\Fib(\tilde {\phi_0})$. Moreover $E\c_G$ is a formal object in $d\A(G)$.
\end{definition}
As a consequence:  
\begin{lemma}
	\label{lem:injres}
	The sequence
	\begin{equation}
		\label{eq:injres4}
		\begin{tikzcd}
		H_*(E\c_G) \arrow[r] & \inj_0 \arrow[r, "\phi_0"] & \inj_1 \arrow[r, "\phi_1"] & \inj_2 \arrow[r, "0"] & 0
		\end{tikzcd}
	\end{equation}
	is exact and it is an injective resolution of $H_*(E\c_G)$ in $\A(G)$. Where $H_*(E\c_G)$ is the homology of the differential graded object $E\c_G$.
\end{lemma}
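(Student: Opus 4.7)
The plan is to read off $H_*(E\c_G)$ by repeated use of the fibre long exact sequence in $d\A(G)$, using in an essential way that the underlying sequence \eqref{eq:injres0} is exact (this is Lemma \ref{lem:exactness}, proved in \ref{subsec:exact}) and that the $\inj_n$ are injective objects of $\A(G)$ concentrated in homological degree zero with zero differential.

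First I would compute $H_*(\Fib(\phi_1))$. Since $\inj_1$ and $\inj_2$ are concentrated in degree zero, the fibre sequence
\[
\Fib(\phi_1)\to \inj_1\xrightarrow{\phi_1}\inj_2
\]
gives a long exact sequence in homology whose only non-trivial piece is
\[
0\to H_0(\Fib(\phi_1))\to \inj_1\xrightarrow{\phi_1}\inj_2\to H_{-1}(\Fib(\phi_1))\to 0.
\]
Exactness of \eqref{eq:injres0} at $\inj_2$ forces $\phi_1$ to be surjective, so $H_{-1}(\Fib(\phi_1))=0$, and one reads off $H_0(\Fib(\phi_1))=\Ker(\phi_1)$. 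Thus $\Fib(\phi_1)$ is itself formal, quasi-isomorphic to $\Ker(\phi_1)$, and the map induced on $H_0$ by the factorization $\tilde\phi_0\colon \inj_0\to \Fib(\phi_1)$ is precisely the corestriction of $\phi_0$ to its image $\Ker(\phi_1)=\im(\phi_0)$ (again by exactness of \eqref{eq:injres0}, this time at $\inj_1$).

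Next I would run the fibre long exact sequence for $E\c_G=\Fib(\tilde\phi_0)$. Since $\inj_0$ and $\Fib(\phi_1)$ are both concentrated in degree zero, the only non-trivial piece is
\[
0\to H_0(E\c_G)\to \inj_0\xrightarrow{\tilde\phi_0}\Ker(\phi_1)\to H_{-1}(E\c_G)\to 0.
\]
The map $\tilde\phi_0$ on $H_0$ is surjective onto its image $\Ker(\phi_1)$, so $H_{-1}(E\c_G)=0$ and $H_*(E\c_G)=H_0(E\c_G)=\Ker(\phi_0)$, concentrated in degree zero. (This is also consistent with $E\c_G$ being formal, as asserted in Definition \ref{def:mainobj}.)

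Finally I would assemble \eqref{eq:injres4}. The map $H_*(E\c_G)\to\inj_0$ is the inclusion $\Ker(\phi_0)\hookrightarrow\inj_0$, so exactness at $H_*(E\c_G)$ and at $\inj_0$ is immediate; exactness at $\inj_1$ reads $\im(\phi_0)=\Ker(\phi_1)$ and exactness at $\inj_2$ reads surjectivity of $\phi_1$, both of which are part of Lemma \ref{lem:exactness}. Since each $\inj_n$ has been constructed as an injective object of $\A(G)$ (subsections \ref{sub:inj0}, \ref{sub:inj1}, \ref{sub:inj2}), the resulting complex is an injective resolution of $H_*(E\c_G)$ in $\A(G)$. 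No serious obstacle is anticipated: the whole argument is formal manipulation of the fibre sequence, contingent only on the already-proved exactness statement for \eqref{eq:injres0} and on the injectivity of the three building blocks.
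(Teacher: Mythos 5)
Your argument is correct and is essentially the paper's own: the paper computes $H_*(E\c_G)=\Ker(\phi_0)$ by applying the homology isomorphism $\iota(\Ker(\phi))\xrightarrow{\simeq}\Fib(\phi)$ for surjective $\phi$ (Lemma \ref{lem:homiso}) to $\phi_1$ and to $\bar{\phi_0}$ via the comparison of $\Fib(\bar{\phi_0})$ with $\Fib(\tilde{\phi_0})$, which is the same fibre long exact sequence bookkeeping you carry out, and then, exactly as you do, deduces the lemma from the exactness of \eqref{eq:injres0} (Lemma \ref{lem:exactness}) together with the injectivity of $\inj_0,\inj_1,\inj_2$ (Corollaries \ref{cor:inj1} and \ref{cor:inj2}). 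The only slip is calling the $\inj_n$ ``concentrated in degree zero'': they are $2$-periodic graded objects of $\A(G)$ with zero differential, so your long exact sequences run unchanged in each internal degree and the conclusion $H_*(E\c_G)=\Ker(\phi_0)$, placed as a graded object with no shift, is unaffected.
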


In turn the injectives are constructed by \ref{eq:deffh}: 

\begin{equation}
	\inj_0:=f_G(V(G)) \qquad \inj_1:=\bigoplus_{i\geq 1}f_{H_i}(\bigoplus_{j\geq 1}V(H_i^j)) \qquad \inj_2:=f_1(\bigoplus_{F}V(F))
\end{equation}

for a graded injective $\hbg G$-module $V(G)$, a graded torsion injective $\hbg {H_i^j}$-module $V(H_i^j)$ for every $ij\geq 1$ and a graded torsion injective $\hbg F$-module $V(F)$ for every finite subgroup $F$ of $G$.

\begin{rem}
	Notice that the objects are indeed injective by \ref{cor:inj1} and \ref{cor:inj2}.
\end{rem}

We start in \ref{subsec:object} by detailing the construction of $E\c_G$ and proving it is a formal object to obtain Lemma \ref{lem:injres}. The bulk of the section is the explicit construction of \eqref{eq:injres0}: the objects are built in \ref{sub:inj0}, \ref{sub:inj1} and \ref{sub:inj2}, while the maps are built in \ref{sub:phi0} and \ref{sub:phi1}. We conclude proving exactness of \eqref{eq:injres0} in \ref{subsec:exact}. All the inputs needed to build \eqref{eq:injres0} come from the global sections of the Cousin complex \eqref{eq:cc3}.

\begin{nota}
	In all this section the local cohomology modules are the ones in the Cousin complex \eqref{eq:cc2}, therefore we will omit the sheaf $\O$ from the notation: $\h 2F:=\h 2F(\O)$.
\end{nota}

\subsection{Formality and main construction}
\label{subsec:object}
Starting from the exact sequence of injectives \eqref{eq:injres0} we detail the construction of $E\c_{G}$ and we prove it is formal.

Note there is a natural inclusion $\iota:\A(G)\to d\A(G)$ obtained simply regarding an object of $\A(G)$ as an object of $d\A(G)$ with zero differential. There is also another functor $H_*:d\A(G) \to \A(G)$ obtained taking the homology of the object with differential.

\begin{definition}
	An object $X\in d\A(G)$ is said to be formal when it is quasi-isomorphic to its homology $H_*(X)$.
\end{definition}
\begin{definition}
	\label{def:fibre}
	For a map $\phi:X\to Y$ in $d\A(G)$ the fibre $\Fib(\phi)\in d\A(G)$ is defined at the level $n$ by: $\Fib(\phi)_n=X_n\oplus Y_{n+1}$, with differential $d_{\Fib(\phi)}$ that on the $Y$ component is simply the differential of $Y$, and on the $X$ component is the direct sum of the differential of $X$ with the map $\phi$ itself:
	\begin{equation}
		\label{eq:fibre}
		\begin{tikzcd}[column sep= 0.1em, row sep= 2.7em]
			X_n \arrow[d, "d_X"] \arrow[rrd, "\phi_n"]& \oplus & Y_{n+1} \arrow[d,"d_Y"]\\
			X_{n-1} & \oplus & Y_{n}
		\end{tikzcd}
	\end{equation}
\end{definition}

\begin{rem}
	For a map $\phi:X \to Y$ between objects in $\A(G)$ we have two ways to consider the kernel. We have a well defined kernel $\Ker(\phi)\in \A(G)$ with no differential in the abelian category $\A(G)$, or we can consider $\phi$ as a map in $d\A(G)$ through the inclusion $\iota$ and consider the fibre of that map $\Fib(\phi)\in d\A(G)$ which is an object with differential.
\end{rem}

 When the map is surjective these two objects are equivalent:
\begin{lemma}
	\label{lem:homiso}
	If $\phi:X \to Y$ is a surjective map in $\A(G)$, then there is an homology isomorphism $\iota (\Ker(\phi))\xrightarrow{\simeq} \Fib(\phi)$.
\end{lemma}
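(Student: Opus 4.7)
The plan is to exhibit the obvious candidate for the quasi-isomorphism, namely the inclusion $j\colon \iota(\Ker(\phi)) \hookrightarrow \Fib(\phi)$ given at level $n$ by $x \mapsto (x,0) \in X_n\oplus Y_{n+1}$, and then check directly that it is a chain map inducing an isomorphism on homology.

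First I would verify that $j$ is well-defined as a map in $d\A(G)$. Since $\iota(\Ker(\phi))$ has zero differential, I need $d_{\Fib(\phi)}(x,0)=0$ for every $x\in \Ker(\phi_n)$. Using Definition \ref{def:fibre} and the fact that the differentials on $\iota X$ and $\iota Y$ are zero, the differential of $\Fib(\phi)$ reduces to
\[
d_{\Fib(\phi)}(x,y)=(0,\phi_n(x)),
\]
so $d_{\Fib(\phi)}(x,0)=(0,\phi_n(x))=(0,0)$ exactly when $x\in \Ker(\phi_n)$. Hence $j$ lands in cycles and commutes with differentials.

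Next I would compute $H_*(\Fib(\phi))$ directly from the formula above. The cycles in degree $n$ are $\Ker(\phi_n)\oplus Y_{n+1}$, and the boundaries coming from degree $n+1$ are $\{(0,\phi_{n+1}(x'))\mid x'\in X_{n+1}\}=\{0\}\oplus \im(\phi_{n+1})$. This is the step where the hypothesis that $\phi$ is surjective in $\A(G)$ enters: surjectivity gives $\im(\phi_{n+1})=Y_{n+1}$, so the boundaries are exactly $\{0\}\oplus Y_{n+1}$. Taking the quotient yields
\[
H_n(\Fib(\phi))=\frac{\Ker(\phi_n)\oplus Y_{n+1}}{\{0\}\oplus Y_{n+1}}\cong \Ker(\phi_n),
\]
and the isomorphism is implemented by the class of $(x,0)$, i.e.\ precisely $H_n(j)$. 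Since $H_*(\iota(\Ker(\phi)))=\Ker(\phi)$ with zero differential, the induced map on homology is the identity of $\Ker(\phi_n)$ in each degree, and so $j$ is a quasi-isomorphism.

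The argument is essentially bookkeeping once the differential on $\Fib(\phi)$ is written down explicitly; the only substantive point is invoking surjectivity of $\phi$ to kill the entire $Y$-component in passing to homology, without which one would only obtain an exact sequence $0\to \Ker(\phi)\to H_*(\Fib(\phi))\to \cok(\phi)[-1]\to 0$ rather than an isomorphism.
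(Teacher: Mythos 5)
Your proof is correct and follows essentially the same route as the paper: include $\Ker(\phi)$ into the $X$-component of the fibre, note the differential reduces to $(x,y)\mapsto(0,\phi_n(x))$, and use surjectivity to see the induced map on homology is an isomorphism. In fact your bookkeeping is slightly more precise than the paper's wording, since you correctly record the cycles as $\Ker(\phi_n)\oplus Y_{n+1}$ (with the $Y_{n+1}$ summand killed by the boundaries) rather than just $\Ker(\phi_n)$.
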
 

\begin{proof}
	Since $X$ and $Y$ have no differential, in \eqref{eq:fibre} we only have the map $\phi$ as differential. There is an obvious inclusion map $i:\Ker(\phi)\to \Fib(\phi)$ that on each level includes the kernel in the $X$-component of the fibre. It is easy to check it commutes with the differentials since the kernel has zero differential and the composition $d_{\Fib(\phi)}\circ i$ is zero as well. Since $\phi$ is surjective, the image of $d_{\Fib(\phi)}$ is the full $Y_n$ component at each level, while the kernel is $\Ker(\phi)_n\subseteq X_n$ at each level. Therefore when we take the homology of $d_{\Fib(\phi)}$, the inclusion $i$ induces an isomorphism.
\end{proof}

The following diagram helps in understanding the construction and proving formality:
\begin{equation*}
	\begin{tikzcd}
		E\c_{G} \arrow[dr] &&& \\
		\Ker(\phi_0) \arrow[r] & \inj_0 \arrow[d, "\bar {\phi_0}"']\arrow[r, "\phi_0"] \arrow[dr, "\tilde {\phi_0}"]& \inj_1 \arrow[r,"\phi_1"] & \inj_2 \\
		&\Ker(\phi_1) \arrow[r, "i"] &\Fib(\phi_1) \arrow[u] &
	\end{tikzcd}
\end{equation*}

the map $\phi_0$ factors trough a map $\tilde {\phi_0}:= i \circ \bar \phi_0$ where $\bar {\phi_0}$ is the map that $\phi_0$ induces to the kernel. Recall that $E\c_{G}$ is defined as the fibre of the map $\tilde \phi_0$

\begin{lemma}
	The object $E\c_G$ is formal. More precisely we have an homology isomorphism $\iota (\ker(\phi_0))\xrightarrow{\simeq} \Fib(\tilde {\phi_0})$
\end{lemma}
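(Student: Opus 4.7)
The plan is to exhibit the desired map directly and verify it is a quasi-isomorphism by a hands-on calculation of $H_*(\Fib(\tilde\phi_0))$, using only the exactness of the sequence \eqref{eq:injres0}.

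First I would unpack Definition \ref{def:fibre}. Since $\inj_0$ is an object of $\A(G)$ (so has zero differential), and $\tilde\phi_0=i\circ\bar\phi_0$ sends $a\in\inj_0$ to $(\phi_0(a),0)\in \Fib(\phi_1)$, iterating the fibre construction gives, at level $n$,
\[
\Fib(\tilde\phi_0)_n \;=\; (\inj_0)_n\oplus (\inj_1)_{n+1}\oplus (\inj_2)_{n+2},
\]
with differential $(a,b,c)\mapsto (0,\phi_0(a),\phi_1(b))$; the three summands inherit zero internal differentials from $\A(G)$, so the only contributions come from $\phi_0$ and $\phi_1$.

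Next I would compute cycles and boundaries level by level. The cycles at level $n$ are $\ker(\phi_0)_n\oplus \ker(\phi_1)_{n+1}\oplus (\inj_2)_{n+2}$, while the boundaries landing at level $n$ are $0\oplus \im(\phi_0)_{n+1}\oplus \im(\phi_1)_{n+2}$. By the assumed exactness of \eqref{eq:injres0} we have $\im(\phi_0)=\ker(\phi_1)$ and $\im(\phi_1)=\inj_2$, so the last two summands cancel in the quotient and
\[
H_n(\Fib(\tilde\phi_0))\;\cong\;\ker(\phi_0)_n.
\]

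Finally I would define the comparison map $\iota(\ker(\phi_0))\to \Fib(\tilde\phi_0)$ as the inclusion of $\ker(\phi_0)$ into the $\inj_0$-summand (with zero in the other two components). This is a chain map, since $\iota(\ker(\phi_0))$ has zero differential and for $a\in\ker(\phi_0)$ the differential of $(a,0,0)$ is $(0,\phi_0(a),0)=0$. The computation above shows this map induces the identity on homology, hence is a quasi-isomorphism. Because $\iota(\ker(\phi_0))$ has zero differential, it equals its own homology, and therefore $E\c_G$ is quasi-isomorphic to its homology, which is identified with $\ker(\phi_0)$. The only point to watch is the degree bookkeeping in the iterated fibre, where the $\inj_2$-summand sits two degrees higher than in a naive total complex, and making sure the exactness of \eqref{eq:injres0} is applied in the right degrees; there is no deeper obstacle, as the argument is essentially the collapse of a two-step injective resolution.
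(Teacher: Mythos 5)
Your argument is correct, and it reaches the stated homology isomorphism by a genuinely different route than the paper. The paper never unwinds the double fibre: it interposes the auxiliary object $\Fib(\bar{\phi_0})$ and applies Lemma \ref{lem:homiso} twice --- once to the surjection $\phi_1$ (giving $\Ker(\phi_1)\xrightarrow{\simeq}\Fib(\phi_1)$, hence by comparing the two fibre sequences in homology $\Fib(\bar{\phi_0})\xrightarrow{\simeq}\Fib(\tilde{\phi_0})$), and once to the surjection $\bar{\phi_0}$ (giving $\Ker(\phi_0)=\Ker(\bar{\phi_0})\xrightarrow{\simeq}\Fib(\bar{\phi_0})$) --- so the quasi-isomorphism is obtained structurally, using only exactness at $\inj_1$ and $\inj_2$ through surjectivity statements. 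You instead unpack Definition \ref{def:fibre} iteratively to write $\Fib(\tilde{\phi_0})_n=(\inj_0)_n\oplus(\inj_1)_{n+1}\oplus(\inj_2)_{n+2}$ with differential $(a,b,c)\mapsto(0,\phi_0(a),\phi_1(b))$, and compute cycles and boundaries by hand; since the internal differentials vanish and the boundary subobject splits as a direct sum, exactness of \eqref{eq:injres0} collapses the last two summands and gives $H_n\cong\ker(\phi_0)_n$, with the inclusion into the $\inj_0$-summand as an explicit quasi-isomorphism. Your computation is sound (the degree bookkeeping and the use of $\phi_1\circ\phi_0=0$ are exactly what is needed for the differential to square to zero), and it buys a completely explicit description of both the homology and the comparison map; the paper's argument buys brevity, reuse of the kernel-versus-fibre lemma, and a pattern that scales more cleanly to longer resolutions, as would be needed for higher-rank tori. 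One small point worth making explicit in your write-up: when you say the map ``induces the identity on homology,'' you should note that any cycle $(a,b,c)$ differs from $(a,0,0)$ by the boundary of $(a',b',0)$ with $\phi_0(a')=b$ and $\phi_1(b')=c$, which is exactly where exactness at $\inj_1$ and surjectivity of $\phi_1$ enter.
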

\begin{proof}
	Consider the fibre of $\bar {\phi_0}$, which fits in a commutative diagram in $d\A(G)$:
	\begin{equation*}
		\begin{tikzcd}
			\Fib(\tilde {\phi_0}) \arrow[r] & \inj_0 \arrow[r, "\tilde {\phi_0}"] & \Fib(\phi_1) \\
			\Fib(\bar {\phi_0}) \arrow[u] \arrow[r] & \inj_0 \arrow[r, "\bar {\phi_0}"]\arrow[u, "="] & \Ker(\phi_1)\arrow[u, "i", "\simeq"']
		\end{tikzcd}
	\end{equation*}
	Both rows induce a long exact sequence in homology, and applying Lemma \ref{lem:homiso} the right vertical inclusion is an homology isomorphism, since $\phi_1$ is surjective. As a consequence also the left vertical map is an homology isomorphism. Now it is enough to notice that also $\bar {\phi_0}$ is a surjective map in $\A(G)$ because the sequence is exact at $\inj_1$, therefore applying again the lemma we have an homology isomorphism $\Ker(\bar {\phi_0})\xrightarrow{\simeq} \Fib(\bar {\phi_0})$. Summing up we have a chain of homology isomorphisms:
	\[
	\Ker(\phi_0)=\Ker(\bar {\phi_0})\xrightarrow{\simeq} \Fib(\bar {\phi_0}) \xrightarrow{\simeq}\Fib(\tilde {\phi_0})=E\c_G.
	\]
	Since $\Ker(\phi_0)$ is without differential we obtain $H_*(E\c_G)=\Ker(\phi_0)$, and $E\c_G$ is formal.
\end{proof}
Assuming exactness of \eqref{eq:injres0}, we obtain as an immediate consequence Lemma \ref{lem:injres}.

\subsection{Building $\inj_0$.}
\label{sub:inj0} 
Associated to the whole group $G$ in the Cousin complex we have the codimension $0$ piece $\h 0{\X}=\K$ (defined in \eqref{eq:k}). This is a $\Q$-vector space, that we can make graded $2$-periodic. Simply consider the 2-periodic version $\overline {\K}$ that has a copy of $\K$ in each even dimension and zero in odd dimensions (in general we will always use this notation for this 2-periodic operation). This is our graded injective $\hbg G\cong \Q$-module $V(G):= \overline {\K}$:
\begin{equation}
	\label{eq:inj0}
	\inj_0:=f_G(\overline \K)
\end{equation}
\subsection{Building $\inj_1$.} 
\label{sub:inj1}
Associated to every codimension one subgroup $H_i^j$ we have the codimension one subvariety $\dij$, and the associated codimension one piece in the Cousin complex $\h 1{\dij}$. Recall from \eqref{eq:cij} the isomorphism $\hbg {H_i^j}\cong \Q[\cij]$ and the definition of the ring $\odij$  and ideal $\mij$ in \eqref{eq:odij}.

\begin{lemma}
	\label{lem:inj1}
	The module $\h 1{\dij}$ is a torsion injective $\hbg {H_i^j}\cong \Q[\cij]$-module, where the action is defined by restriction along the ring map
	\[
	\Q[c_{ij}]\to (\odij)^{\wedge}_{m_{ij}}
	\]
	that sends $\cij=e(z_i^j)$ to $\hatij$ (defined in \eqref{eq:hatij}).
\end{lemma}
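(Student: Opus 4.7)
The plan is to establish three separate claims: that the specified ring map yields a well-defined $\Q[c_{ij}]$-action on $\h 1 {\dij}$, that this action is torsion, and that it is injective. The whole argument rests on the identification $\h 1 {\dij} \cong \K/\odij \cong \odij[t_{ij}^{-1}]/\odij$ already established via Lemma \ref{lem:commalg} and the stable Koszul complex computation in \eqref{eq:h1dijiso}.

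First I would verify the action is well defined. The module $\odij[t_{ij}^{-1}]/\odij$ is visibly $t_{ij}$-torsion, and since $\mij=(t_{ij})$ in $\odij$, it is $m_{ij}$-torsion. Any $m_{ij}$-torsion $\odij$-module extends canonically to a module over the completion $(\odij)^{\wedge}_{m_{ij}}$: an element $f=\sum_k \beta_k t_{ij}^k\in (\odij)^{\wedge}_{m_{ij}}$ acts on a torsion element $x$ (killed by $t_{ij}^N$) by truncating the series at level $N$, the result being independent of the truncation level. The power series expansion $\hat t_{ij}=\sum_{k\geq 1}\alpha_k t_{ij}^k$ from \eqref{eq:hatijpowerseries} manifestly lies in this completion, so composing with the ring map $\Q[\cij]\to (\odij)^{\wedge}_{m_{ij}}$ sending $\cij\mapsto \hatij$ yields the claimed action. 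Since $\alpha_1=1$, the element $\hatij$ factors in the completion as $t_{ij}\cdot u$ with $u$ a unit, hence the ideals $(\hatij)$ and $(t_{ij})$ coincide in $(\odij)^{\wedge}_{m_{ij}}$. Consequently $\hatij$-torsion coincides with $t_{ij}$-torsion, and the module is torsion over $\Q[\cij]$.

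For injectivity I would exploit that $\Q[\cij]$ is a PID and that our module is $(\cij)$-primary. Over a PID, a torsion module supported at a single maximal ideal is injective if and only if it is divisible by a uniformizer of that ideal. Hence it suffices to prove that multiplication by $\cij$, equivalently by $t_{ij}$ (via the unit observation above), is surjective on $\K/\odij$. Using the presentation $\K/\odij\cong \odij[t_{ij}^{-1}]/\odij$, any class is represented as $f/t_{ij}^n$ for some $f\in \odij$, and the class of $f/t_{ij}^{n+1}$ is then a preimage under multiplication by $t_{ij}$. This establishes divisibility and hence injectivity.

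The main subtlety is the careful handling of the extension of the action from $\odij$ to $(\odij)^{\wedge}_{m_{ij}}$: one must check that the truncation procedure defining the completed action really respects the ring structure, so that the resulting $\Q[\cij]$-module structure is unambiguous. Once this verification is made, the remaining steps are formal consequences of the PID structure on $\Q[\cij]$ together with the explicit Koszul presentation of $\K/\odij$.
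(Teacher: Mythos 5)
Your argument is correct, and its skeleton is the same as the paper's: identify $\h 1\dij$ explicitly, let $\hatij$ act through the completion, read off torsion from the expansion \eqref{eq:hatijpowerseries}, and conclude injectivity from divisibility over the PID $\Q[\cij]$. The only real divergence is where the completion enters. The paper first transports the whole local cohomology module to the completed ring, $\h 1\dij\cong H^1_{\mij}(\odij)\cong H^1_{\hat m_{ij}}((\odij)^{\wedge}_{\mij})$ via Lemma \ref{lem:completion}, and then runs the stable Koszul complex for $\hat m_{ij}=(\hatij)$, so the presentation $(\odij)^{\wedge}_{\mij}[\hatij^{-1}]/(\odij)^{\wedge}_{\mij}$ displays both the $\hatij$-action and $\hatij$-divisibility at a glance. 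You instead stay with the uncompleted presentation $\K/\odij\cong\odij[\tij^{-1}]/\odij$ from \eqref{eq:h1dijiso}, note it is $\mij$-power torsion and hence canonically a module over $(\odij)^{\wedge}_{\mij}$, and transfer torsion and divisibility from $\tij$ to $\hatij$ through the unit factorization $\hatij=\tij\cdot u$ (legitimate because $\alpha_1=1$ and $1+\alpha_2\tij+\cdots$ is invertible in the completion). Both routes work: yours trades the flat base change behind Lemma \ref{lem:completion} for the routine check that an $\mij$-power-torsion module extends to the completion, and your reduction of injectivity to $\cij$-divisibility for a $(\cij)$-primary module is exactly the paper's ``divisible implies injective over a PID'' step, stated a bit more carefully. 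One small caution: writing an arbitrary element of $(\odij)^{\wedge}_{\mij}$ as $\sum_k\beta_k\tij^k$ tacitly assumes a coefficient splitting; it is cleaner to define the action through the quotients $(\odij)^{\wedge}_{\mij}/\hat m_{ij}^N\cong\odij/\mij^N$, which is all your truncation argument uses, and the specific element $\hatij$ genuinely is such a series by \eqref{eq:hatijpowerseries}, so nothing breaks.
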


To define this action we will need to switch to the completed rings. For the task we will use the following well-known fact:

\begin{lemma}
	\label{lem:completion}
	Let $I$ be a finitely generated ideal of the Noetherian ring $R$, then
	\[
	H^i_I(R)\cong H^i_{\hat I}(R^{\wedge}_I)
	\]
	where the second local cohomology is computed with respect to the completed ring $R^{\wedge}_I$.
\end{lemma}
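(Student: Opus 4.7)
The plan is to combine flat base change for local cohomology with the observation that $I$-adic completion is the identity on $I$-torsion modules. Write $I=(f_1,\ldots,f_n)$ (possible by hypothesis), and let $\hat f_i$ denote the image of $f_i$ in $R^{\wedge}_I$, so $\hat I=(\hat f_1,\ldots,\hat f_n)$.

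First I would compute both sides via the \v{C}ech (or stable Koszul) complex. Since $R$ is Noetherian and $I$ is finitely generated, $R^{\wedge}_I$ is $R$-flat (Atiyah--Macdonald), and localisation commutes with tensor product, so
\begin{equation*}
\check{C}^\bullet(f_1,\ldots,f_n;R)\otimes_R R^{\wedge}_I \;\cong\; \check{C}^\bullet(\hat f_1,\ldots,\hat f_n;R^{\wedge}_I).
\end{equation*}
Taking cohomology and using flatness gives
\begin{equation*}
H^i_I(R)\otimes_R R^{\wedge}_I \;\cong\; H^i_{\hat I}(R^{\wedge}_I).
\end{equation*}

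Next I would show that the left-hand side agrees with $H^i_I(R)$ itself. The key point is that $H^i_I(R)$ is $I$-torsion: every element is annihilated by some power of $I$. So it is the filtered colimit of its submodules $H^i_I(R)[I^n]$, each of which is an $R/I^n$-module. Since $I$ is finitely generated, $R^{\wedge}_I/I^nR^{\wedge}_I\cong R/I^n$, and therefore for any $R/I^n$-module $N$,
\begin{equation*}
N\otimes_R R^{\wedge}_I \;\cong\; N\otimes_{R/I^n}(R^{\wedge}_I/I^n R^{\wedge}_I) \;\cong\; N.
\end{equation*}
Passing to the colimit over $n$ gives $H^i_I(R)\otimes_R R^{\wedge}_I\cong H^i_I(R)$. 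Combining with the displayed isomorphism above yields the claim.

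There is no real obstacle here; everything reduces to standard facts, and a short proof could simply cite \cite[Theorem 3.5]{hartshorne:local} or the Stacks Project. The only mildly delicate point is the interchange of tensor product and cohomology, for which flatness of completion in the Noetherian setting is essential; once that is invoked, the rest is bookkeeping.
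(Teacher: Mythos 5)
Your proof is correct and follows essentially the same route as the paper: flat base change along $R\to R^{\wedge}_I$ (which the paper cites from Huneke's lectures and you re-derive via the \v{C}ech complex) combined with the fact that the $I$-power-torsion module $H^i_I(R)$ is unchanged by tensoring with the completion. Your colimit argument for the latter is just a spelled-out version of the paper's brief remark that the module agrees with its completion, so no substantive difference.
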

\begin{proof}
	We have the chain of isomorphisms:
	\[
	H^i_I(R)\cong R^{\wedge}_I\otimes_RH^i_I(R)\cong H^i_{\hat I}(R^{\wedge}_I).
	\]
	Where the first isomorphism follows since the module is isomorphic to its completion. For the second isomorphism apply \cite[Proposition 2.14]{huneke:lectures}: since $R$ is Noetherian $R^{\wedge}_I$ is a flat $R$-algebra.
\end{proof}

\begin{proof}[Proof of Lemma \ref{lem:inj1}]
	Note the chain of isomorphisms:
	\begin{equation}
		\label{eq:loccohchain}
		\h 1{\dij}\cong H^1_{m_{ij}}(\odij)\cong H^1_{\hat {m}_{ij}}((\odij)^{\wedge}_{m_{ij}})\cong \frac {(\odij)^{\wedge}_{\mij}[\hatij^{-1}] }{(\odij)^{\wedge}_{\mij}}.
	\end{equation}
	The first isomorphism is Lemma \ref{lem:commalg}. The second one is Lemma \ref{lem:completion}. The third isomorphism is simply the computation of local cohomology by means of the stable Koszul complex (\cite{huneke:lectures}), since $\hatij$ generates $\hat m_{ij}$. From this chain of isomorphisms it is clear the module $\h 1{\dij}$ admits an action of $\hatij$, and also that is $\hatij$-divisible. Since $\Q[\cij]$ is a PID, divisible implies injective and $\h 1{\dij}$ is injective.
	
	The more transparent form of the module \eqref{eq:loccohchain} and the one we will use is \eqref{eq:h1dijiso}. From \eqref{eq:h1dijiso} it is immediate to see that the module is torsion, since $\hat t_{ij}$ adds a zero at $D_{ij}$ to every class, making it regular after a finite number of iterations.
\end{proof}

By the previous lemma the module $\h 1{\dij}$ is torsion injective. Consider the 2-periodic version $\overline {\h 1{\dij}}$ as before, where $\cij$ acts as an element of degree $-2$. This is the graded torsion injective $\hbg {H_i^j}$-module that we use: $V(H_i^j):= \overline {\h 1{\dij}}= \overline {\K/\odij}$.

To build $\inj_1$, define the direct sum
\begin{equation}
	\label{eq:ti}
	T_i:= \bigoplus_{j\geq 1} \overline {\K/\odij}.
\end{equation}
by Corollary \ref{cor:inj1} and Corollary \ref{cor:inj2} the object 
\begin{equation}
	\label{eq:inj1}
	\inj_1:= \bigoplus_{i\geq 1}f_{H_i}(T_i)
\end{equation}
is injective and well defined in $\A(G)$.

\subsection{Building $\inj_2$} 
\label{sub:inj2}
Associated to every finite subgroup $F$ of $G$ we have the codimension two subvariety $\bvar F$, and the codimension two piece in the Cousin complex $\h 2F$. Recall from \eqref{eq:xaxb} the isomorphism $\hbg F\cong \Q[\xa,\xb]$, and the definition of the ring $\O_F$ and ideal $m_F$ in \eqref{eq:of}.
\begin{lemma}
	\label{lem:action2}
	The module $\h 2F$ is a torsion injective $\hbg F\cong \Q[\xa,\xb]$-module, where the action is defined by restriction along the ring map
	\[
	\Q[\xa,\xb]\to (\O_{F})^{\wedge}_{m_F}
	\]
	that sends $\xa=e(\za)$ to $\hatone$, and $\xb=e(\zb)$ to $\hattwo$ (defined in \eqref{eq:hatij}).
\end{lemma}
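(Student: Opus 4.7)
The plan is to mirror the derivation in the proof of Lemma \ref{lem:inj1}. First apply Lemma \ref{lem:commalg} and Lemma \ref{lem:completion} to obtain
\[
\h 2F \cong H^2_{m_F}(\O_F) \cong H^2_{\hat m_F}((\O_F)^{\wedge}_{m_F}).
\]
To identify the completed stalk, recall that $\bvar F$ is the single closed point $Q$ of $\X$ described by Lemma \ref{lem:fin}, realized as the transverse intersection of the divisors $\din A$ and $\din B$. Consequently $(\ta, \tb)$ is a regular system of parameters for $\O_{\X,Q}$, and by the power series expansion \eqref{eq:hatijpowerseries} the completed coordinates $\hatone, \hattwo$ differ from $\ta, \tb$ by power series with leading coefficient $1$, so they too form a regular system of parameters generating $\hat m_F$. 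This gives a canonical isomorphism $(\O_F)^{\wedge}_{m_F} \cong \C[[\hatone, \hattwo]]$.

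Computing by the stable Koszul complex on the regular sequence $(\hatone, \hattwo)$ then yields
\[
\h 2F \cong \frac{\C[[\hatone,\hattwo]][\hatone^{-1},\hattwo^{-1}]}{\C[[\hatone,\hattwo]][\hatone^{-1}] + \C[[\hatone,\hattwo]][\hattwo^{-1}]},
\]
which as a $\C$-vector space has basis $\{\hatone^{-a}\hattwo^{-b} : a,b\geq 1\}$. The ring map $\Q[\xa,\xb]\to \C[[\hatone,\hattwo]]$ sending $\xa\mapsto \hatone$ and $\xb\mapsto \hattwo$ then endows $\h 2F$ with a well-defined $\Q[\xa,\xb]$-action. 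Torsion is immediate from the basis description (each basis element is annihilated by $\xa^a\xb^b$), and $\xa,\xb$-divisibility is equally transparent.

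The main obstacle is injectivity over the $2$-dimensional ring $\Q[\xa,\xb]$, which, unlike the PID situation of Lemma \ref{lem:inj1}, does not follow from divisibility alone. The idea is to compare with the polynomial version. The $\C$-linear map $\hatone^{-a}\hattwo^{-b}\mapsto \xa^{-a}\xb^{-b}$ is a $\Q[\xa,\xb]$-module isomorphism
\[
\h 2F \xrightarrow{\cong} H^2_{(\xa,\xb)}(\C[\xa,\xb]).
\]
Now $H^2_{(\xa,\xb)}(\Q[\xa,\xb])$ is the injective envelope $E_{\Q[\xa,\xb]}(\Q)$ of the residue field, being the top local cohomology of the $2$-dimensional regular local ring $\Q[\xa,\xb]_{(\xa,\xb)}$ at its maximal ideal (and $(\xa,\xb)$-torsion modules over $\Q[\xa,\xb]$ and its localization agree). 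Since $\C$ is free over $\Q$, extending scalars gives
\[
H^2_{(\xa,\xb)}(\C[\xa,\xb]) \cong \C \otimes_\Q E_{\Q[\xa,\xb]}(\Q) \cong \bigoplus_{i\in I} E_{\Q[\xa,\xb]}(\Q),
\]
indexed by a $\Q$-basis $I$ of $\C$. Because $\Q[\xa,\xb]$ is Noetherian, any direct sum of injectives is injective, and the lemma follows.
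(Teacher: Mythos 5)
There is a genuine error at the start of your identification of the completed stalk: $\bvar F$ is \emph{not} a single closed point. For any nontrivial $F$ it is a finite collection of closed points of $\zar\X$ (e.g.\ for $F$ cyclic of order $n>1$ one has $\var F\cong\c[n]$ with $n^2$ points, and $\bvar F$ still contains several of them); the paper is explicit about this, writing $\bvar F=\{Q_1,\dots,Q_k\}$. Consequently $m_F$ is not a maximal ideal of $\O_F$ but a product $S^{-1}m_1\cdots S^{-1}m_k$ of the maximal ideals at the points $Q_1,\dots,Q_k$, and $(\O_F)^{\wedge}_{m_F}$ is a finite \emph{product} of complete local rings rather than $\C[[\hatone,\hattwo]]$. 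So the asserted $\C$-basis $\{\hatone^{-a}\hattwo^{-b}: a,b\geq 1\}$ of $\h 2F$ and the isomorphism $\h 2F\cong H^2_{(\xa,\xb)}(\C[\xa,\xb])$ are wrong as stated: $\h 2F$ is a direct sum of $k$ such pieces. (A related slip: $\din A\cap \din B$ need not equal $\bvar F$ either; it can also meet $\bvar{F'}$ for proper subgroups $F'<F$ with the same indices $n_A,n_B$, e.g.\ the diagonal $\Z/2$ inside $\Z/2\times\Z/2$, so "$\bvar F$ realized as the transverse intersection of $\din A$ and $\din B$" is not accurate.) The missing step is exactly what the paper supplies: a Mayer--Vietoris splitting $H^2_{m_F}(\O_F)\cong\bigoplus_{j=1}^k H^2_{S^{-1}m_j}(\O_F)$ over the disjoint points of $\bvar F$, after which one localizes and completes at each point separately.

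Once that splitting is inserted, your argument does go through summand by summand, and in fact your ending differs from the paper's in an acceptable way: the paper identifies each completed summand with $H^2_{\hat m}(\C[[\xa,\xb]])$, quotes that this is an injective hull of the residue field over the Gorenstein local ring $\C[[\xa,\xb]]$, and descends injectivity to $\Q[\xa,\xb]$ by faithful flatness of the scalar extension; you instead identify each summand with $H^2_{(\xa,\xb)}(\C[\xa,\xb])\cong\C\otimes_\Q E_{\Q[\xa,\xb]}(\Q)$ and conclude by Bass--Papp (direct sums of injectives over a Noetherian ring are injective), which also handles the extra finite direct sum over the points $Q_j$ for free. That alternative final step is correct; the proof as written, however, has the concrete gap described above and needs the decomposition over the points of $\bvar F$ before any power-series identification is legitimate.
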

\begin{proof}
	We have the chain of isomorphisms:
\begin{equation}
	\label{eq:locmod}
	\h 2F\cong H^2_{m_F}(\O_F)\cong H^2_{\hat {m}_F}((\O_{F})^{\wedge}_{m_F}).
\end{equation}
The first isomorphism is Lemma \ref{lem:commalg}, while the second one is Lemma \ref{lem:completion}. From this chain of isomorphisms it is immediate that $\h 2F$ is an $(\O_{F})^{\wedge}_{m_F}$-module, and therefore we can define the action of $\xa$ and $\xb$ as the action of $\hatone$ and $\hattwo$ since $m_F=\gen {\ta, \tb}$. Moreover the module is torsion because ${\hat m}_F=\gen {\hatone,\hattwo}$.

To prove that $\h 2F$ is an injective module we will split it as a direct sum of local cohomology modules for the Zariski topology and then use regularity. Consider a TP-open $\Spec(R)$ (that we can take to be affine in the Zariski topology)  containing $\bvar F=\{Q_1, \dots Q_k\}$ (which is a finite collection of closed points in $\zar {\X}$). Modulo restricting the open we can consider $m_i$ to be the maximal ideal in $R$ associated to the closed point $Q_i$. By Remark \ref{rem:stalk}: $\O_{F}=S^{-1}R$ for a multiplicatively closed subset $S$, therefore in $\O_{F}$ the ideal $S^{-1}m_i$ remains maximal, and we can write $m_F=S^{-1}m_1\cdot S^{-1}m_2\dots S^{-1}m_k$.
We can now apply Mayer-Vietoris for local cohomology \cite[Theorem 2.3]{huneke:lectures} since $\{Q_1, \dots Q_k\}$ are disjoints, obtaining the desired splitting:
\[
H^2_{m_F}(\O_{F})\cong H^2_{S^{-1}m_1}(\O_{F})\oplus \dots \oplus H^2_{S^{-1}m_k}(\O_{F}).
\]

We prove now that each factor $H^2_{S^{-1}m_i}(\O_{F})$ is injective, so that the direct sum is injective as well. First of all notice that by Remark \ref{rem:stalk} if we localize at $S^{-1}m_i$ we obtain the local ring for the Zariski structure sheaf at the point $Q_i$:
\[
(\O_{F})_{S^{-1}m_i}\cong R_{m_i} = \zar {\oxx {Q_i}}.
\]
Since the ideal $S^{-1}m_i$ is maximal we obtain:
\begin{equation}
	\label{eq:locpiece}
	H^2_{S^{-1}m_i}(\O_{F}) \cong H^2_{m_i}(R_{m_i}) \cong H^2_ {\hat {m_i}}((R_{m_i})^{\wedge}_{m_i})
\end{equation}
where first we localize at $S^{-1}m_i$ obtaining local cohomology of a local ring with respect to its maximal ideal \cite[Proposition 8.1]{hochster:local}, and then we complete with respect to that maximal ideal. The final module of \eqref{eq:locpiece} is injective since $\zar \X$ is regular. To prove this notice that $(R_{m_i})^{\wedge}_{m_i}$ is a completed Noetherian local ring whose residue field is $\C$, that is therefore isomorphic to a ring of power series $\C[[\xa,\xb]]$ (here we can use $\xa$ and $\xb$ since through the action just defined they are sent to two generators of $\hat m_i$). The module \ref{eq:locpiece} is injective over $\C[[\xa,\xb]]$: the base ring being regular local of dimension $2$ is in particular Gorenstein local of dimension $2$ and therefore its top local cohomology $H^2_{(\xa,\xb)}(\C[[\xa,\xb]])$ is an injective hull of its residue field \cite[Proposition 11.8]{hochster:local}. The module \eqref{eq:locpiece} remains injective over $\Q[\xa,\xb]$ since the scalar extension to $\C[[\xa,\xb]]$ is faithfully flat.
\end{proof}
By the previous lemma the module $\h 2F$ is torsion injective. Consider the 2-periodic version $\overline {\h 2F}$ as before, where $\xa$ and $\xb$ act as elements of degree $-2$. This is the graded torsion injective $\hbg F$-module that we use: $V(F):=\overline {\h 2F}$.

To build $\inj_2$, simply define the direct sum over all the finite subgroups
\begin{equation}
	\label{eq:n}
	N:= \bigoplus_{F}\overline {\h 2F}
\end{equation}
by Corollary \ref{cor:inj2} the object
\begin{equation}
	\label{eq:inj2}
	\inj_2:=f_1(N)
\end{equation} 
is injective and well defined in $\A(G)$.

\subsection{Building $\phi_0$.} 
\label{sub:phi0}
Now we have constructed the relevant injective objects, we turn to constructing maps 
\begin{equation}
	\label{eq:injres1}
	f_G(\overline \K) \xrightarrow{\phi_0} \bigoplus_{i\geq 1}f_{H_i}(T_i) \xrightarrow{\phi_1} f_1(N) \rightarrow 0
\end{equation}
between them. Recall the definition of $\K$ \eqref{eq:k}, $T_i$ \eqref{eq:ti}, and $N$ \eqref{eq:n}.

Each component $\phi_0^i$ is determined by an $\oefh i$-map
\[
\phi_0^i:\eghinv i\oefh i \otimes \ov {\K}\to T_i
\]
tensored with $\ehinv i\of$ (Where the rings are defined in \eqref{eq:ofhi}). Moreover we want this map to extend the one in the Cousin complex \eqref{eq:cc3}: 
\begin{lemma}
	\label{lem:phi0}
	For every $i\geq 1$ there exists an $\oefh i$-map $\phi_0^i$, making the following diagram commute:
	\begin{equation}
		\begin{tikzcd}
			\label{eq:comm0}
			\ov {\K} \arrow[r,"d_0^i"] \arrow[d, "1\otimes \_"]& T_i\\
			\eghinv i \oefh i \otimes \ov {\K} \arrow[ur, dotted, "\phi_0^i"']&
		\end{tikzcd}
	\end{equation}
where $d_0^i$ is the $i$-th component of the map in the Cousin complex \eqref{eq:cc3}.
\end{lemma}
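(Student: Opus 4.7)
The plan is to construct $\phi_0^i$ as an $\oefh i$-linear extension of $d_0^i$ by exploiting the $\oefh i$-module structure on $T_i$ and pinning down canonical inverses for the Euler classes in $\egh i$ via completed coordinates.

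As a first step, I would record the $\oefh i$-module structure on $T_i = \bigoplus_{j \geq 1} \overline{\K/\odij}$. Each summand $\overline{\K/\odij}$ is a module over the factor $\hbg{H_i^j} \cong \Q[c_{ij}]$ of $\oefh i$ via the map $c_{ij} \mapsto \hat t_{ij}$ from Lemma \ref{lem:inj1}, and these actions assemble into a well-defined action of the product ring $\oefh i$ on $T_i$. Once this is in place, $\oefh i$-linearity forces $\phi_0^i(r \otimes f) = r \cdot d_0^i(f)$ for every $r \in \oefh i$ and $f \in \ov \K$, so the only question is whether this extends across the localization.

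The central step is to produce a canonical inverse action for each Euler class in $\egh i$ on the $\oefh i$-submodule of $T_i$ generated by $\im(d_0^i)$. A non-trivial character of $G/H_i$ is $z_i^k$ for some $k \geq 1$, and its Euler class $c_{ik}$ acts on $\overline{\K/\odij}$ as multiplication by $\hat t_{ik}$. When $j \nmid k$, the vanishing locus of $\hat t_{ik}$ is $\coprod_{l \mid k} D_{il}$, so $\hat t_{ik}$ is a unit in $\O_{D_{ij}}$ and acts invertibly on $\overline{\K/\odij}$ directly. When $j \mid k$, the expansion \eqref{eq:hatijpowerseries} shows that $\hat t_{ik}$ agrees with $\hat t_{ij}$ up to a unit near $D_{ij}$, and here the divisibility of $\overline{\K/\odij}$ witnessed by the description $(\odij)^{\wedge}_{m_{ij}}[\hat t_{ij}^{-1}]/(\odij)^{\wedge}_{m_{ij}}$ from \eqref{eq:loccohchain}, together with the canonical choice of completed coordinate from Definition \ref{def:coord}, provides a canonical preimage rather than an arbitrary one.

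With these inverses fixed, I set $\phi_0^i(r \otimes f) := r \cdot d_0^i(f)$ for $r \in \eghinv i \oefh i$ and $f \in \ov \K$ and extend linearly; commutativity of the diagram holds trivially at $r = 1$, and $\oefh i$-linearity is built in. The main obstacle is precisely the canonical inversion on the summands with $j \mid k$, since on $\overline{\K/\odij}$ the element $\hat t_{ij}$ is only divisible-but-not-injective: it is the completed-coordinate structure of Definition \ref{def:coord} rather than any unit-in-local-ring argument that selects a well-defined inverse, and checking consistency across the various $j$'s is the delicate part of the construction.
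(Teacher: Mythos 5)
Your proposal stalls exactly at the point you yourself flag as ``the delicate part'': you never actually define the extension to negative powers of the Euler classes. Observing that multiplication by $\hatij$ on $\ov{\quotient{\K}{\odij}}$ is divisible-but-not-injective and then asserting that ``the completed-coordinate structure of Definition \ref{def:coord} selects a well-defined inverse'' is not an argument -- Definition \ref{def:coord} only defines the element $\hatij$, and a surjective, non-injective multiplication map has no canonical section unless you give a concrete recipe. The paper's proof consists precisely of that recipe. Working one component at a time (by Example \ref{ex:eulerclasses}, inverting $\egh i$ in the $j$-th factor $\hbg{H_i^j}\cong\Q[c_{ij}]$ amounts to inverting only $c_{ij}$), one sets $(\phi_0^i)_j(c_{ij}^{-k}\otimes f):=[\hat t_{ij}^{-k}\cdot f]$, where $\hat t_{ij}^{-1}=t_{ij}^{-1}+a_0+a_1t_{ij}+a_2t_{ij}^{2}+\dots$ is the formal inverse of the series \eqref{eq:hatijpowerseries}. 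This a priori infinite expression is well defined because every class in $\quotient{\K}{\odij}$ is annihilated by a power of $\mij=\gen{\tij}$ and the quotient map $\K\to\quotient{\K}{\odij}$ is $\odij$-linear, so all but finitely many terms of the series act by zero; with this formula, $c_{ij}$-equivariance (hence $\oefh i$-linearity) and commutativity of \eqref{eq:comm0} are immediate. Without this explicit construction (or an equivalent one), your ``canonical inverse'' does not exist, and the consistency you defer to the end is exactly the unproved content of the lemma.

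A secondary inaccuracy: you let each Euler class $e(z_i^k)$ act on the summand $\ov{\quotient{\K}{\odij}}$ through $\hat t_{ik}$, and for $j\nmid k$ you argue that $\hat t_{ik}$ is a unit in $\odij$. But the $\oefh i$-action on $T_i$ is componentwise (Lemma \ref{lem:inj1}): the $j$-th summand only feels the $j$-th component of $e(z_i^k)$, which is $1$ when $j\nmid k$ and a nonzero rational multiple of $c_{ij}$ when $j\mid k$, so there is nothing to invert in the first case and only $c_{ij}$ in the second -- this is what makes the componentwise reduction of the localization work. Moreover $\hat t_{ik}$ is a formal power series living in completions such as $(\O_{\X,O})^{\wedge}_{m}$; when $j\nmid k$ the series $\sum_n\alpha_n t_{ik}^n$ does not converge $\mij$-adically (and $t_{ik}$ may even have poles along components of $\dij$), so it has no meaning as an element of $\odij$ or as an operator on $\quotient{\K}{\odij}$, and the unit-in-the-local-ring argument does not parse. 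For $j\mid k$ your claim is correct in effect, but the reason is additivity of the logarithm ($\hat t_{ik}$ is a scalar multiple of $\hat t_{ij}$), not the shape of the expansion \eqref{eq:hatijpowerseries}.
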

\begin{proof}
	The first step is to extend $d_0^i$ to an $\oefh i$-map: $\oefh i \otimes \ov {\K} \to T_i$. This is completely determined by the action of $\oefh i$ on the target and the fact that $d_0^i$ is a $\Q$-map.
	
	We need to extend it further to the localization $\eghinv i$, and we can define it for every $j$-th component $\hbg {H_i^j}\cong \Q[\cij]$  \eqref{eq:cij}. Notice that inverting $\egh i$ in $\Q[\cij]$ means inverting the Euler class $\cij=e(z_i^j)$ (Example \ref{ex:eulerclasses}). Therefore extend the map for negative powers of $\cij$ in the following way:
	\begin{equation}
		\label{eq:phi0}
		\begin{split}
			(\phi_0^i)_j:\Q[\cij^{\pm 1}]\otimes \overline\K &\to \overline{\K/\odij}\\
			\cij^{-k}\otimes f &\mapsto [\hat{t}_{ij}^{-k}\cdot f]
		\end{split}
	\end{equation}
	where $\hat{t}_{ij}^{-1}$ is the power series inverse of $\hat t_{ij}$ (see \eqref{eq:hatijpowerseries}, \eqref{eq:hatij}, and \eqref{eq:tij}) with complex coefficients:
	\begin{equation}
		\label{eq:powinv}
		\hat t_{ij}^{-1}=t_{ij}^{-1}+a_0+a_1t_{ij}+a_2t_{ij}^2+\dots
	\end{equation}
	and the ring $\odij$ and ideal $\mij$ are defined in \eqref{eq:odij}.
	The map in \eqref{eq:phi0} is well defined since for every element in the target there is a power of the ideal $\mij$ that annihilates that element and $\mij = \gen {\tij}$. Moreover the map in the Cousin complex
	\[
	d_0^i:\K\to \quotient{\K}{\odij}
	\]
	is an $\odij$-module map, so that after a certain power of $t_{ij}$ the terms in \eqref{eq:powinv} do not contribute and the sum is finite.
\end{proof}

\subsection{Building $\phi_1$.}
\label{sub:phi1}
The map $\phi_1$ is also determined by its components
\[
\phi_1^i:\ehinv i\of \tens {\oefh i} T_i \to N.
\]
Recall the definition of $T_i$ \eqref{eq:ti}, $N$ \eqref{eq:n}, and the notation \ref{notation:tensi} $\tens i= \tens {\oefh i}$. As before we want this map to extend the one in the Cousin complex \eqref{eq:cc3}:
\begin{lemma}
	\label{lem:phi1}
	For every $i\geq 1$ there exists an $\of$-map $\phi_1^i$, making the following diagram commute:
	\begin{equation}
		\label{eq:comm1}
		\begin{tikzcd}
			T_i \arrow[r, "d_1^i"] \arrow[d, "1\otimes \_"] & N  \\
			\ehinv i \of \tens {i} T_i \arrow[ur, dotted, "\phi_1^i"']&
		\end{tikzcd}
	\end{equation}
	where $d_1^i$ is the $i$-th component of the map in the Cousin complex \eqref{eq:cc3}.
\end{lemma}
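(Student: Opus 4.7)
The plan is to follow the two-step strategy used for $\phi_0^i$ in Lemma \ref{lem:phi0}: first extend $d_1^i$ to an $\of$-linear map out of $\of \tens i T_i$, then extend further across the localization at $\eh i$ using formal power series inverses in the completed coordinates of Definition \ref{def:coord}.

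For the first extension, I would argue that $d_1^i$ is already $\oefh i$-linear. On each $j$-th summand $\overline{\K/\odij}$ of $T_i$, the differential arises from the Cousin complex of sheaves of $\tp \ox$-modules (Proposition \ref{prop:ccstalk}), and the $\odij$-action on the source is intertwined with the $\O_F$-action on those summands $\overline{\h 2F}$ of $N$ with $n_i(F)=j$, via the canonical projections $\odij \to \O_F$. Tensoring up over $\oefh i$ and using the componentwise $\of$-action on $N$ (acting on each summand $\overline{\h 2F}$ through the projection $\of \to \O_F$) then produces the $\of$-linear map
\begin{equation*}
\tilde d_1^i : \of \tens i T_i \to N, \qquad f \otimes x \longmapsto f \cdot d_1^i(x),
\end{equation*}
which already fits into the square with $1 \otimes \_$.

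For the second extension, I mirror \eqref{eq:phi0}. For each character $z$ of $G$ non-trivial on $H_i$, the Euler class $e(z) \in \eh i$ acts on each target summand $\overline{\h 2F}$ as multiplication by $\hat t_z \in (\O_F)^{\wedge}_{m_F}$, via the $\hbg F$-module structure of Lemma \ref{lem:action2}. I define the extension by $e(z)^{-k} \cdot f \otimes x \mapsto \hat t_z^{-k} \cdot f \cdot d_1^i(x)$, where $\hat t_z^{-k}$ is the formal power series inverse in the completed local ring, expanded as in \eqref{eq:powinv}. The infinite sum collapses to a finite one when applied to a concrete class in $\overline{\h 2F}$, because Lemma \ref{lem:action2} guarantees torsion with respect to $m_F = \langle \ta, \tb \rangle$. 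Commutativity of \eqref{eq:comm1} is then built in, since on $1 \otimes T_i$ the map recovers $d_1^i$.

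The main obstacle I expect is uniformly controlling this power-series inversion across the direct sum of finite subgroups $F$ in the target. The element $\hat t_z$ may be a unit in $(\O_F)^{\wedge}_{m_F}$ (precisely when $z|_F$ is non-trivial) on some summands $\overline{\h 2F}$ and a non-unit (when $F \subseteq \ker z$) on others; in the latter case it is purely the torsion property of Lemma \ref{lem:action2} that must ensure the series truncates. Tracking this compatibility component by component, and then checking that $\of$-linearity survives the localization, is the delicate bookkeeping part of the argument.
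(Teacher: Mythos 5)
Your first step --- the claim that $d_1^i$ is already $\oefh i$-linear because the Cousin differential is a map of $\tp \ox$-modules, compatible with the inclusions relating $\O_F$ and $\O_{D_{i,n_i}}$ --- is precisely the point the paper refuses to take for granted, and it is where the real content of the lemma lies. The module structures being compared are not the geometric ones: on the $j=n_i$ summand of $T_i$ the Euler class $c_{i,n_i}$ acts as $\hat t_{i,n_i}$ (Lemma \ref{lem:inj1}), while on the summand $\overline{\h 2F}$ of $N$ it acts, through the inflation map, as $x_i=r\cdot \xa+s\cdot \xb$, i.e.\ as $r\cdot \hatone+s\cdot \hattwo$ (Lemma \ref{lem:action2}). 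The $\O_F$-linearity of $d_1^i$ only reduces the question to the identity $\hat t_{i,n_i}=r\cdot \hatone+s\cdot \hattwo$ in $(\O_F)^{\wedge}_{m_F}$, which is \eqref{eq:fundrel} and is not automatic: it holds exactly because $\hat t_e$ was chosen to be the logarithm of the formal group law, so that pulling back along $\pi_i^{n_i}=r\cdot \pia+s\cdot \pib$ becomes additive (Corollary \ref{lem:lincomb}); for the uncompleted coordinates $t_{i,n_i}$ the relation is false. This is the content of the paper's Lemma \ref{lem:ccmap}, and your proposal contains no argument for it --- ``canonical projections'' do not see the discrepancy between the two coordinate choices.

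Your second step also has a well-definedness problem in exactly the case that matters. On a summand $\overline{\h 2F}$ with $F\subseteq \Ker(z)$, the element $\hat t_z$ lies in the maximal ideal $\hat m_F$, so it has no inverse in $(\O_F)^{\wedge}_{m_F}$, and since $\h 2F$ is $m_F$-torsion it admits no action of any ring in which $\hat t_z$ is inverted; truncating the expansion \eqref{eq:powinv} does not help, because its negative-power terms still do not act on $\h 2F$ (multiplication by $\hat t_z$ on $\h 2F$ is surjective with nonzero kernel, so ``dividing'' $d_1^i(x)$ by $\hat t_z^{k}$ is non-canonical). The paper's definition \eqref{eq:phi1} avoids inverting anything in the target: the capped product $\hat t_{1,n_1}^{-k_1}\cdots \hat t_{r,n_r}^{-k_r}\cdot f$ is formed inside $\K$ on a representative $f$ of the class in $\quotient{\K}{\O_{D_{i,n_i}}}$, and only then is $d_1^i$ applied; the capping is legitimate because the target is $m_F$-torsion and $d_1^i$ is an $\O_F$-map (Remark \ref{rem:capped}), and one must still verify compatibility with the already-defined positive powers of the Euler classes and independence of the chosen representative $f$. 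These verifications, and the relocation of the power-series inverses from the target to a representative in $\K$, are missing from your outline, so the map you write down is not defined as stated.
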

\begin{rem}
	First of all $d_1^i$ should be an $\oefh i$-map. This is more delicate than it looks, we have defined an $\oefh i$-action on $T_i$ in Lemma \ref{lem:inj1} and an $\of$-action on $N$ in Lemma \ref{lem:action2} using specific coordinates for every subgroup $F$. A priori it is not guaranteed that $d_1^i$ coming from the Cousin complex commutes with the action of $\oefh i$ on the source and on the target. Here we are considering $\oefh i$ acting on $N$ by restriction along the ring map $\oefh i \to \of$ of Remark \ref{rem:inflationmap}. Nonetheless with our choice of coordinates this is indeed the case. In the argument it is crucial that $\hat t_e$ is an isomorphism of the formal group law of $\c$ with the additive formal group law.
\end{rem}

\begin{lemma}
	\label{lem:ccmap}
	The $i$-th component of $d_1$ in \eqref{eq:cc3}:
	\begin{equation}
		\label{eq:d1i}
		d_1^i:T_i \to N
	\end{equation}
	is an $\oefh i$-map with respect to the actions defined in Lemma \ref{lem:inj1} and Lemma \ref{lem:action2}.
\end{lemma}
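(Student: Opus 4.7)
The plan is to verify compatibility stalk-wise. Since both $\oefh i$-actions on $T_i$ and $N$ are defined componentwise, it suffices to check for each finite subgroup $F \leq G$ that the stalk map $(d_1^i)_{\eta(\bvar F)}$ is $\oefh i$-linear. By Proposition \ref{prop:ccstalk} this stalk reduces to the single map $\K/\O_{D_{i,n_i}} \to \h 2 F$ with $n_i := n_i(F)$, the other summands of $T_i$ having vanishing stalk at $\eta(\bvar F)$. Similarly, the $\oefh i$-action on the $F$-summand of $N$ factors through the $n_i$-th component of $\oefh i$ via the inflation $\oefh i \to \of$. Hence the problem reduces to showing that this single map commutes with multiplication by the generator $c_{i,n_i}$ of $\Q[c_{i,n_i}] \cong \hbg{H_i^{n_i}}$.

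By Lemma \ref{lem:inj1}, $c_{i,n_i}$ acts on the source as multiplication by $\hat t_{i,n_i}$. On the target, taking $A = 1$, $B = 2$ and writing $z_i = z_1^{\lambda_i} z_2^{\mu_i}$ with $\gcd(\lambda_i, \mu_i) = 1$, the character $z_i^{n_i}$ is trivial on $F$ and admits a unique rational expansion $z_i^{n_i} = (z_A^{n_A})^r (z_B^{n_B})^s$ with $r = n_i \lambda_i / n_A$ and $s = n_i \mu_i / n_B$. By additivity of rational Euler classes, $c_{i,n_i}$ maps to $r x_A + s x_B$ under the inflation $\hbg{H_i^{n_i}} \to \hbg F$, so by Lemma \ref{lem:action2} the induced action on $\h 2 F$ is multiplication by $r \hat t_{A,n_A} + s \hat t_{B,n_B}$.

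Corollary \ref{lem:lincomb} applied to the integer linear maps $\pi_i^{n_i}, \pi_A^{n_A}, \pi_B^{n_B} \colon \X \to \c$ yields $\hat t_{i,n_i} = n_i \lambda_i \hat t_1 + n_i \mu_i \hat t_2$, $\hat t_{A,n_A} = n_A \hat t_1$ and $\hat t_{B,n_B} = n_B \hat t_2$, so that
\[
\hat t_{i,n_i} = r \hat t_{A,n_A} + s \hat t_{B,n_B}
\]
inside $(\O_{\X,O})^\wedge_m$. Thus both $\oefh i$-actions become multiplication by the same completed function, which lives in $(\O_F)^\wedge_{m_F}$.

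Finally, $d_1^i$ at the $F$-stalk is a differential in the Cousin complex of $\tp \ox$, hence $\O_F$-linear by Lemma \ref{lem:commalg}. Since $t_{i,n_i} \in m_F$ (because $D_{i,n_i} \supseteq \bvar F$) and both source and target are $m_F$-torsion, the power series $\hat t_{i,n_i}$ acts as a finite polynomial on any given element, so $\O_F$-linearity delivers the desired commutativity. The main obstacle is precisely the displayed identity: for the uncompleted coordinates $t_{ij}$, whose addition rule is the elliptic formal group law $F(t_1, t_2)$, the analogous linear relation would fail, and it is exactly the linearization by $\hat t_e$ (as the logarithm of the formal group law) that makes the construction compatible.
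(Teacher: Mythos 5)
Your overall strategy is the same as the paper's: reduce via Lemma \ref{lem:fin} (or Proposition \ref{prop:ccstalk}) to a single component $\h 1{D_{i,n_i}}\to \h 2F$ for each finite $F$, identify the two actions of $c_{i,n_i}$ as multiplication by completed coordinates, prove a linear relation among those coordinates using the logarithm (Corollary \ref{lem:lincomb}), and conclude from $\O_F$-linearity of the Cousin differential together with torsion to truncate the power series. However, there is a genuine error in the middle step. You take $A=1$, $B=2$ for every $F$, but the decomposition \eqref{eq:splitting} $F=H_A^{n_A}\cap H_B^{n_B}$ is only available for an $F$-dependent choice of directions $A(F),B(F)$: for instance the cyclic subgroup $F$ generated by $(i,i)\in\T^2$ satisfies $H_1^{n_1}\cap H_2^{n_2}=\Z_4\times\Z_4\neq F$, so the directions $1,2$ simply do not work. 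This matters because the $\Q[\xa,\xb]$-module structure on $\h 2F$ in Lemma \ref{lem:action2} is defined with respect to the chosen $A(F),B(F)$, and the action of $c_{i,n_i}$ is obtained from the inflation map of Remark \ref{rem:inflationcomp}, i.e.\ from the relation $z_i^{n_i}=(\za)^r(\zb)^s$ read in the character group of $G/F$, where $r,s$ are \emph{integers} (Definition \ref{def:xi}). Your formulas $r=n_i\lambda_i/n_A$, $s=n_i\mu_i/n_B$ are in general not integers (in the example above, with $z_i=z_1^{-1}z_2$ one gets $r=-1/4$, $s=1/4$), and a ``rational expansion'' of characters does not describe the inflation of Euler classes; so your identification of the target action, and hence the displayed identity you verify, is not the one the lemma requires. (The rational-coefficient identity among the $\hat t$'s does hold as power series, but it is not what identifies the $\Q[c_{i,n_i}]$-action on $\h 2F$.)

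The fix is exactly the paper's argument: since $F\subseteq H_i^{n_i}$, the character $z_i^{n_i}$ lies in the character lattice of $G/F$, which is generated by $\za$ and $\zb$ for the chosen $A(F),B(F)$, giving \emph{integers} $r,s$ with $z_i^{n_i}=(\za)^r(\zb)^s$; applying $\XF$ gives $\pi_i^{n_i}=r\,\pia+s\,\pib$ and then Corollary \ref{lem:lincomb} yields \eqref{eq:fundrel} in $(\O_F)^{\wedge}_{m_F}$, with no explicit formula for $r,s$ needed. Two smaller points: the $\O_F$-linearity of the stalk of $d_1$ is the content of Proposition \ref{prop:ccstalk} (Lemma \ref{lem:commalg} is not the right citation), and the source $\K/\O_{D_{i,n_i}}$ is not $m_F$-torsion, only $t_{i,n_i}$-power torsion --- which is what you actually need, since $\hat t_{i,n_i}$ is a power series in $t_{i,n_i}$ alone and its image lands in the $m_F$-torsion module $\h 2F$.
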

\begin{proof}
	To simplify the notation in this proof we omit the bar to indicate the 2-periodic version of the various modules. First, the $j$-th component in the source is $\h 1{D_{ij}}$ and it is sent into the $j$-th component in the target: the direct sum on all $F$ finite such that $F$ and $H_i$ generate $H_i^j$:
	\[
	\h 1{\dij} \to \bigoplus_{\gen {F,H_i}=H_i^j}\h 2F.
	\]
	
	This is because the induced map \eqref{eq:d1i} $\h 1{D_{ij}}\to \h 2F$ is non zero if and only if $\bvar F\subseteq D_{ij}$ if and only if $\gen {F,H_i}=H_i^j$ by Lemma \ref{lem:fin}. Fixed $F$, by the same lemma the map is non-zero only at the index $j=n_i$.
	
	Therefore it is enough to show that for any finite subgroup $F$ the induced map \eqref{eq:d1i}
	\begin{equation}
		\label{eq:cousmap}
		d_1^i:\h 1{D_{i,n_i}}\to \h 2F
	\end{equation}
	
	is an $\hbg {H_i^{n_i}}\cong \Q[c_{i,n_i}]$-map, namely it commutes with the action of the Euler class $c_{i,n_i}=e(z_i^{n_i})$ on the source and on the target. 
	
	By Lemma \ref{lem:inj1}: $c_{i,n_i}$ acts as $\hat t_{i,n_i}$ on $\h 1{D_{i,n_i}}$. On $\h 2F$ the action is defined in Lemma \ref{lem:action2}: $\h 2F$ is an $\hbg F\cong \Q[x_A,x_B]$-module, and $\hbg {H_i^{n_i}}$ acts by restriction along the $F$th component of the inflation map \eqref{eq:inflationhi}. Therefore $c_{i,n_i}$ acts as
	\begin{equation}
	    \label{eq:xi2}
	    x_i=r \cdot \xa +s \cdot \xb
	\end{equation}
	defined in \eqref{eq:xi}, where $r$ and $s$ are the two integers such that
	\begin{equation}
		\label{eq:chareq}
		z_i^{n_i}=(\za)^r\cdot(\zb)^s
	\end{equation}
    in the character group of $G/F$.
	The equality \eqref{eq:xi2} can be obtained by taking Euler classes on both members of \eqref{eq:chareq} and noticing that the group operation in the character group translates into sum in $\Q[x_A,x_B]$. Therefore by Lemma \ref{lem:action2} the Euler class $c_{i,n_i}$ acts on $\h 2F$ as $r \cdot \hatone +s \cdot \hattwo$.
	
	We are only left to show that these two actions commute with \eqref{eq:cousmap}. This translates in proving that the equality
	\begin{equation}
		\label{eq:fundrel}
		\hat t_{i,n_i}= r\cdot \hatone +s\cdot \hattwo
	\end{equation}
 	holds in $(\O_{F})^{\wedge}_{m_F}$, since $\hat t_{i,n_i}$ commutes with the $\O_{F}$-module map \eqref{eq:cousmap} (recall the definition of the ring $\O_F$ and the ideal $m_F$ in \eqref{eq:of}).
	
	To prove \eqref{eq:fundrel} apply the functor $\XF$ to the characters equality  \eqref{eq:chareq}, to obtain the same linear relation with the projections (Definition \ref{def:zij}):
	\[
	\pi_i^{n_i}=r\cdot \pia+s\cdot \pib.
	\]
	Note that in this last equality the group operation is the one of $\c$. Recalling the definition of the completed coordinates \eqref{eq:hatij}, combined with Lemma \ref{lem:lincomb} we obtain precisely:
	\[
	\hat t_{i,n_i}=(\pi_i^{n_i})^*(\hat t_e)= (\pia,\pib)^*\circ (r,s)^*(\hat t_e)=r\cdot \hatone +s\cdot \hattwo
	\]
\end{proof}

\begin{rem}
	This proof clarifies the need of using completed coordinates $\hat t_{i,n_i}$ instead of the uncompleted ones $t_{i,n_i}$. For the uncompleted functions the relation \eqref{eq:fundrel} is not true (compute zeroes and poles on both sides). As a consequence the algebra of the Euler classes simply does not match the algebra of the uncompleted functions.
\end{rem}

\begin{proof}[Proof of Lemma \ref{lem:phi1}]
	Proceed as we did in Lemma \ref{lem:phi0} and first extend $d_1^i$ to an $\of$-map 
	\[
	\of \tens {\oefh i} T_i  \to N.
	\]
	This is completely determined by the action of $\of$ on $N$ and Lemma \ref{lem:ccmap}. 
	
	We need to extend it further to the localization $\ehinv i$, and we can define it for every $F$th component of $\of$: $\hbg F \cong \Q[\xa,\xb]$. Recall from Example \ref{ex:eulerclasses} that inverting $\ehinv i$ in $\Q[\xa,\xb]$ means inverting all the Euler classes $x_j$ with $j\geq 1$ and $j\neq i$. Therefore define the map for negative powers of the Euler classes in the following way:
	\begin{equation}
		\label{eq:phi1}
		\begin{split}
			(\phi_1^i)_F: \ehinv i\Q[\xa,\xb]\tens {\Q[x_i]} \overline {\quotient{\K}{\O_{D_{i,n_i}}}} &\to \overline {\h 2F}\\
			\frac 1{x_1^{k_1}x_2^{k_2}\dots x_r^{k_r}}\tens {\Q[x_i]}[f] &\mapsto d_1^i([\hat t_{1,n_1}^{-k_1} \cdot \hat t_{2,n_2}^{-k_2} \dots \hat t_{r,n_r}^{-k_r} \cdot f])
		\end{split}
	\end{equation}
	and extend it to be a $\Q[\xa,\xb]$-module map (see \eqref{eq:powinv} for the power series inverses).
	\begin{rem}
		\label{rem:capped}
		The main issue with this definition is that the element
		\begin{equation}
			\label{eq:inftysum}
			\hat t_{1,n_1}^{-k_1} \cdot \hat t_{2,n_2}^{-k_2} \dots \hat t_{r,n_r}^{-k_r}
		\end{equation}
		is a priori an infinite sum of monomials $t_{1,n_1}^{j_1}\cdot \tin 2^{j_2}\dots \tin r^{j_r}$ and not a well defined element in $\K$. To address this issue note that every element in $\h 2F$ is annihilated by a power of the ideal $m_F$, that every coordinate $t_{j,n_j}$ is in $m_F$, and that $d_1^i$ is an $\O_{F}$-module map. Therefore there exists a positive integer $d$ such that in the infinite sum \eqref{eq:inftysum}, elements of total degree higher than $d$ do not contribute in any way in \eqref{eq:phi1}. In this way we can cap the infinite sum \eqref{eq:inftysum} accordingly so that the sum is finite and therefore is a well defined element of $\K$.
	\end{rem}
	It is immediate now to check \eqref{eq:phi1} is compatible with the already defined action of the positive powers of $x_j$ for $j\neq i$. Namely $t_{j,n_j}$ commutes with $d_1^i$, and they simplify with the capped $\hat t_{j,n_j}^{-1}$ anyway since we are left with terms of higher enough degree to not contribute to the result. 
	
	It is immediate also to check that the definition \eqref{eq:phi1} does not depend on the representative $f$ picked for the class, since if $[f+g]$ is another representative with $g$ regular on $D_{i,n_i}$, then $g\cdot \hat t_{j,n_j}^{-1}$ is regular on $D_{i,n_i}$ for any $j\neq i$ and does not contribute in any way.
\end{proof}

\subsection{Exactness}
\label{subsec:exact}
We turn now to prove that the sequence \eqref{eq:injres0} is exact. From Remark \ref{rem:bottomlevel} we only need to check exactness at the bottom level (the level at the trivial subgroup). Therefore it is enough to prove that the bottom level of \eqref{eq:injres0}:
\begin{equation}
	\label{eq:injresbott}
	\eginv \of \otimes \overline \K \xrightarrow{\phi_0} \bigoplus_{i\geq 1}\ehinv i\of \tens i T_i \xrightarrow{\phi_1} N\to 0
\end{equation}
is an exact sequence of $\of$-modules. We can do it for each $F$th component of $\of$ at a time. Namely prove that for every finite subgroup $F$ of $G$, the $F$th component of \eqref{eq:injresbott}:
\begin{equation}
	\label{eq:injrescomp}
	\eginv \Q[\xa, \xb] \otimes \overline \K \xrightarrow{\phi_0} \bigoplus_{i\geq 1}\ehinv i \Q[\xa, \xb] \tens i \overline {\quotient {\K}{\O_{D_{i,n_i}}}} \xrightarrow{\phi_1} \overline {\h 2F} \to 0.
\end{equation}
is an exact sequence of $\hbg F\cong \Q[\xa, \xb]$-modules. 

The strategy is to use exactness of \eqref{eq:ccstalk}. For this task we will need the ``Moving'' Lemma \ref{lem:reduction}. In the interest of the exposition we start in proving exactness of \eqref{eq:injrescomp} and will devote the rest of the section in proving Lemma \ref{lem:reduction}.

\begin{lemma}
	\label{lem:exactness}
	The sequence \eqref{eq:injrescomp} is an exact sequence of $\Q[\xa,\xb]$-modules.
\end{lemma}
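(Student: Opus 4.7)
The plan is to reduce exactness of \eqref{eq:injrescomp} to the already established exactness of the Cousin-complex stalk \eqref{eq:ccstalk}. The two sequences are linked by the $\Q$-linear maps $1\otimes\_\colon \ov\K \to \eginv\Q[\xa,\xb]\otimes\ov\K$ and $1\otimes\_\colon \ov{\K/\O_{D_{i,n_i}}} \to \ehinv i\Q[\xa,\xb]\tens i\ov{\K/\O_{D_{i,n_i}}}$, and by Lemmas \ref{lem:phi0} and \ref{lem:phi1} the maps $\phi_0$ and $\phi_1$ restrict along these to $d_0^i$ and $d_1^i$ respectively.

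First I would dispatch the easier parts. For $\phi_1\circ\phi_0=0$, evaluate on a generator $1\otimes f$ of the source: by the commutativity of \eqref{eq:comm0} one has $\phi_0(1\otimes f)=\sum_i 1\otimes d_0^i(f)$, and then $\phi_1$ applied to this gives $\sum_i d_1^i(d_0^i(f))=d_1\circ d_0(f)=0$ by exactness of \eqref{eq:ccstalk}; the $\eginv\Q[\xa,\xb]$-linearity of $\phi_0$ and the $\of$-linearity of $\phi_1$ then extend this vanishing to the whole source. Surjectivity of $\phi_1$ follows directly: given $[w]\in\ov{\h 2F}$, exactness of \eqref{eq:ccstalk} produces $\sum_i[g_i]\in\bigoplus_i\ov{\K/\O_{D_{i,n_i}}}$ with $d_1(\sum_i[g_i])=[w]$, whence $[w]=\phi_1(\sum_i 1\otimes[g_i])$.

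The crux is $\Ker\phi_1\subseteq\im\phi_0$. The idea is to invoke the ``Moving'' Lemma \ref{lem:reduction} to rewrite any element $y$ of the middle term, modulo $\im\phi_0$, as an element of the form $\sum_i 1\otimes[h_i]$ carrying no inverse Euler classes. On the $i$-th summand the tensor relation over $\oefh i$ already identifies $x_i\otimes[g]$ with $1\otimes\hat t_{i,n_i}[g]$, so only the $x_j^{-1}$ with $j\neq i$ remain as genuine external denominators; the lemma should absorb each such $x_j^{-1}$ into multiplication by the formal inverse $\hat t_{j,n_j}^{-1}$ on the coordinate factor, producing a finite-sum representative via the capping argument of Remark \ref{rem:capped} and introducing an error that lies in $\im\phi_0$. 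Once $y\equiv\sum_i 1\otimes[h_i]\pmod{\im\phi_0}$, the hypothesis $\phi_1(y)=0$ reads $d_1(\sum_i[h_i])=0$ in $\ov{\h 2F}$; by exactness of \eqref{eq:ccstalk} there is $h\in\ov\K$ with $d_0(h)=\sum_i[h_i]$, so $y\equiv\phi_0(1\otimes h)$ modulo $\im\phi_0$ and hence $y\in\im\phi_0$.

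The main obstacle will be invoking the Moving Lemma cleanly. One must check that the substitution of $x_j^{-1}$ by $\hat t_{j,n_j}^{-1}$ is compatible with the tensor relation over $\oefh i$ on the summand being modified, and that moving a term from the $i$-th summand into another summand, which is how the correction in $\im\phi_0$ ought to arise, respects the linear identity \eqref{eq:fundrel} $\hat t_{j,n_j}=r\hat t_{1,n_1}+s\hat t_{2,n_2}$ dictating how the completed coordinates act consistently on $\ov{\h 2F}$. The capping of Remark \ref{rem:capped} keeps the infinite series manipulations finite and well defined at each stage, and the formal-group identity is precisely what lets one trade a denominator appearing on one summand for contributions produced by $\phi_0$ on the other summands.
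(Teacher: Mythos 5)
Your overall strategy is the paper's: reduce to exactness of the stalkwise Cousin sequence \eqref{eq:ccstalk} via the commutative comparison coming from \eqref{eq:comm0} and \eqref{eq:comm1}, quote the Moving Lemma \ref{lem:reduction} to push an arbitrary element of the middle term into the image of $\iota$ modulo $\im(\phi_0)$, and finish by a diagram chase; the surjectivity of $\phi_1$ and the final chase are exactly as in the paper and are fine. (Your parenthetical sketch of \emph{why} the Moving Lemma holds is not how it is actually proved --- the real point is Lemma \ref{lemma:julian}, i.e.\ choosing via Abel's theorem a representative with poles along only two directions so that the correction $\phi_0(X')$ does not spread denominators into the other summands --- but since you may cite \ref{lem:reduction} as a stated result this does not affect the present lemma.)

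There is, however, a genuine gap in your proof that $\phi_1\circ\phi_0=0$. The elements $1\otimes f$ do not generate the source $\eginv\Q[\xa,\xb]\otimes\ov{\K}$ over $\Q[\xa,\xb]$, and $\phi_0$ is \emph{not} $\eginv\Q[\xa,\xb]$-linear in any usable sense, because its target is not a module over the fully localized ring: in the $i$-th summand $\ehinv i\Q[\xa,\xb]\tens i \ov{\quotient{\K}{\O_{D_{i,n_i}}}}$ the class $x_i$ is not inverted. Nor can you recover the vanishing from divisibility: for $y=\tfrac{1}{x_1^{k_1}\cdots x_r^{k_r}}\otimes f$ one only gets $x_1^{k_1}\cdots x_r^{k_r}\,(\phi_1\phi_0)(y)=(\phi_1\phi_0)(1\otimes f)=0$, and since the target $\ov{\h 2F}$ is a torsion $\Q[\xa,\xb]$-module this says nothing about $(\phi_1\phi_0)(y)$ itself. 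The step must instead be checked directly on elements carrying denominators, which is what the paper does: by the explicit formulas \eqref{eq:phi0} and \eqref{eq:phi1} (with the capping of Remark \ref{rem:capped}) one computes
\begin{equation*}
\phi_1\circ\phi_0\Bigl(\tfrac{1}{x_1^{k_1}\cdots x_r^{k_r}}\otimes f\Bigr)=d_1\circ d_0\bigl(\hat t_{1,n_1}^{-k_1}\cdots \hat t_{r,n_r}^{-k_r}\cdot f\bigr)=0,
\end{equation*}
which vanishes because $d_1\circ d_0=0$ in \eqref{eq:ccstalk}. Note also that your closing chase implicitly uses $\phi_1\circ\phi_0=0$ (to know $\phi_1$ kills the correction term in $\im(\phi_0)$), so this step must be repaired before the kernel argument is complete.
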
 
\begin{proof}
	We use exactness of the sequence \eqref{eq:ccstalk} which is the first row of the following commutative diagram:
	\begin{equation}
		\label{eq:exactdiagram}
		\begin{tikzcd}
		 \overline{\K} \arrow[r, "d_0"] \arrow[d, "1\otimes\_"] & \bigoplus_{i\geq 1} \overline{\quotient {\K}{\O_{D_{i,n_i}}}} \arrow[r, "d_1"] \arrow[d, "\iota"]&  \overline{\h 2F} \arrow[r] \arrow[d, "\Id"] & 0 \\
		 \eginv \Q[\xa, \xb] \otimes \overline \K \arrow[r, "\phi_0"] &
		 \bigoplus_{i\geq 1}\ehinv i \Q[\xa, \xb] \tens i \overline {\quotient {\K}{\O_{D_{i,n_i}}}} \arrow[r,"\phi_1"] &
		 \overline {\h 2F} \arrow[r] &
		 0
		\end{tikzcd}
	\end{equation}
	The vertical map $\iota$ is the natural inclusion $\alpha \mapsto 1\tens i \alpha$ on every $i$-th component. Note that the commutativity of \eqref{eq:exactdiagram} is due to the commutativity of \eqref{eq:comm0} and \eqref{eq:comm1}.

	First of all the composition $\phi_1\circ \phi_0$ is zero. This is because on pure tensor elements in the source:
	\[
	\phi_1\circ \phi_0(\frac 1{x_1^{k_1}x_2^{k_2}\dots x_r^{k_r}} \otimes f)=d_1\circ d_0(\hat t_{1,n_1}^{-k_1}\cdot \hat t_{2,n_2}^{-k_2} \dots \hat t_{r,n_r}^{-k_r}\cdot f)=0.
	\]
	The first equality comes from the two definitions \eqref{eq:phi0}, and \eqref{eq:phi1}. This all equals zero since the composition $d_1\circ d_0$ is zero in the first row.
	
	The map $\phi_1$ is surjective being an extension of $d_1$, which is surjective.
	
	We are left to prove that $\Ker(\phi_1)\subseteq \im(\phi_0)$. Pick $X\in \Ker(\phi_1)$. By Lemma \ref{lem:reduction}, modulo $\im(\phi_0)$ we can suppose $X\in \im(\iota)$. We can now conclude with diagram chasing: it exists $Y$ such that $\iota(Y)=X$, $d_1(Y)=0$ and since the first row of \eqref{eq:exactdiagram} is exact it also exists $F\in \overline {\K}$ such that $d_0(F)=Y$. Therefore $\phi_0(1\otimes F)=X$.
\end{proof}

We devote the rest of this section in proving the ``Moving'' lemma:
\begin{lemma}
	\label{lem:reduction}
	In the diagram \eqref{eq:exactdiagram}, every element
	\[
	X \in \bigoplus_{i\geq 1}\ehinv i \Q[\xa, \xb] \tens i \overline {\quotient {\K}{\O_{D_{i,n_i}}}}
	\]
	is equivalent, modulo the image of $\phi_0$ to an element in the image of $\iota$.
\end{lemma}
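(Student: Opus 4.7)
The plan is to reduce $X$ modulo $\im(\phi_0)$ iteratively, eliminating denominators one factor at a time. The key construction: for any term
\[
T=\frac{p(\xa,\xb)}{\prod_{j\in J}x_j^{a_j}}\tens i[g]
\]
appearing in the $i$-th component of $X$, with $J\subseteq\{j\geq 1:j\neq i\}$, I would pick a lift $h\in\K$ with $[h]=[g]$ in $\K/\O_{D_{i,n_i}}$ and set
\[
Y_T=\frac{p(\xa,\xb)}{\prod_{j\in J}x_j^{a_j}}\otimes h \in \eginv\Q[\xa,\xb]\otimes\overline\K.
\]
Unpacking the definition \eqref{eq:phi0}, a direct computation describes $\phi_0(Y_T)$ slot by slot: in the $i$-th slot it reproduces $T$ exactly, so $T$ is eliminated in $X-\phi_0(Y_T)$; in each slot $\ell\in J$ it contributes $\frac{p}{\prod_{j\in J\setminus\{\ell\}}x_j^{a_j}}\tens \ell [\hat t_{\ell,n_\ell}^{-a_\ell}\cdot h]$, with strictly smaller denominator support $J\setminus\{\ell\}$, because $x_\ell^{-a_\ell}$ has been absorbed into the module factor via the action $x_\ell\leftrightarrow\hat t_{\ell,n_\ell}$ of Lemma~\ref{lem:inj1}; and in each remaining slot $\ell\notin J\cup\{i\}$ it contributes $\frac{p}{\prod_{j\in J}x_j^{a_j}}\tens \ell [h]$, non-zero only for the finitely many $\ell$ for which $h$ has a pole along $D_{\ell,n_\ell}$.

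With this basic move, I would proceed by lexicographic induction on the pair $c(X)=(m,N)$, where $m$ is the maximum size $|J_T|$ of the denominator support across the (finitely many) non-zero terms of $X$, and $N$ is the number of terms attaining this maximum. The base case $m=0$ is immediate: every term is denominator-free, so $X\in\im(\iota)$. In the inductive step $m\geq 1$, applying the key construction to a chosen term with $|J_T|=m$ cancels it, introduces only terms of strictly smaller $|J|$ in the slots of $J_T$, and may introduce new terms of the same size $|J|=m$ in a bounded set of further slots (those in the pole locus of $h$).

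The main obstacle is precisely this last, ``sideways-moving'' contribution: it can increase $N$ or merely shift $m$-sized terms to new positions, so a naive induction does not close. To overcome this I would exploit the freedom to modify the lift by $h\mapsto h+r$ with $r\in\O_{D_{i,n_i}}$, which does not affect the cancellation in slot $i$ but changes the contribution in each slot $\ell\notin J_T\cup\{i\}$ by $\frac{p}{\prod_{j\in J_T}x_j^{a_j}}\tens \ell [r]$. Choosing $r$ so that these unwanted contributions are cancelled reduces to a Chinese-Remainder-type surjectivity statement for the map
\[
\O_{D_{i,n_i}}\longrightarrow\bigoplus_{\ell}\K/\O_{D_{\ell,n_\ell}},
\]
indexed over the finite pole locus of $h$ away from $D_{i,n_i}$. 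Establishing enough surjectivity of this CRT map — using the explicit completed coordinates of Section~\ref{sec:2}, the disjointness properties of the divisors $D_{\ell,n_\ell}$ recorded in Section~\ref{sec:1}, and the exactness of the Cousin-complex stalk \eqref{eq:ccstalk} from Proposition~\ref{prop:ccstalk} — is where I expect the main technical difficulty to lie. Once it is in place, the lexicographic induction closes and every $X$ is equivalent modulo $\im(\phi_0)$ to an element of $\im(\iota)$.
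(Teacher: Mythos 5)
Your basic move and your slot-by-slot description of $\phi_0(Y_T)$ are exactly the computation the paper uses, and you have located the crux correctly: the ``sideways'' contributions in slots $\ell\notin J\cup\{i\}$. But that crux is precisely the step your proposal leaves unproven, and the Chinese-Remainder statement you reduce it to is not adequate as formulated. Surjectivity of $\O_{D_{i,n_i}}\to\bigoplus_{\ell\in S}\K/\O_{D_{\ell,n_\ell}}$, with $S$ the pole locus of the chosen lift $h$, does not close the induction: the correcting element $r$ is constrained only along $D_{i,n_i}$ and the divisors indexed by $S$, so it may itself acquire poles along $D_{\ell,n_\ell}$ for new directions $\ell\notin S\cup J\cup\{i\}$, recreating terms with the same denominator support $J$ in fresh slots. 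What is really needed is a representative of the class whose polar locus among the divisors $D_{\ell,n_\ell}$ is confined to direction $i$ together with a direction already inverted in the denominator; and the optimistic version of this with no extra direction at all is false, because a function on $\X$ whose poles lie only along divisors in the single direction $i$ is holomorphic, hence constant, on the compact fibres of $\pi_i$ and is therefore a pullback $\pi_i^*(g)$ --- so, for instance, the class of $\pi_2^*(w)\,t_{1,1}^{-1}$ with $w\in\K(\tp \c)$ nonconstant admits no representative with poles only along direction $1$. Some genuine geometric input is thus unavoidable, and appealing to exactness of the stalkwise Cousin complex \eqref{eq:ccstalk} does not supply it: that exactness does not solve prescribed-principal-part problems on $\X$ (globally these are obstructed, e.g.\ $H^1(\X,\O)\neq 0$ for an abelian surface), and you give no derivation of your surjectivity from it.

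The paper supplies the missing input as a separate moving lemma (Lemma \ref{lemma:julian}): if $D_a$ and $D_b$ meet only at the origin, then after an autoisogeny identifying them with the coordinate axes, every class in $\K/\O_{D_{a,n}}$ has a representative with poles only along the $a$- and $b$-directions; the proof expands principal parts along each component $D_{a,P}$ with coefficients pulled back along $\pi_b$ from $\K(\tp \c)$ (the isomorphism \eqref{eq:coeff}) and uniformizers $h_{a,P}$ built from Abel's theorem (Lemma \ref{lem:coordpoints}). In the induction on the number of inverted Euler classes this is applied with $b=c_1$ an already-inverted direction transverse to $D_s$, so $\phi_0(X')$ is nonzero only in the slots $s,c_1,\dots,c_r$, where the denominator strictly drops; a separate case inserts an auxiliary transverse direction $d$ and applies the move twice when no inverted direction meets $D_s$ only at $O$, and nontrivial numerators are handled by writing $\Q[\xa,\xb]=\Q[x_s,x_{c_1}]$. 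The transversality hypothesis (needed for the $\pi_b^*$-description of the coefficients) and this case analysis have no counterpart in your sketch; without Lemma \ref{lemma:julian} or a substitute for it, your lexicographic induction does not close, so the proposal has a genuine gap at its central step.
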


To prove this technical lemma we will need to work with the irreducible components $\dip$ of the $\dij$ in the Zariski topology, and build different uniformizers for them. We will also require somme lemmas about the specific geometry of $\X$. 

We start recalling Abel's Theorem \cite[Corollary 3.5]{silverman}:

\begin{lemma}[Abel's Theorem for elliptic curves]
	\label{lem:abel}
	Let $\c$ be an elliptic curve with identity element $e$, and let $D=\sum n_P(P)$ be a divisor of $\c$. Then there exists a meromorphic function $f$ such that $\Div(f)=D$ if and only if
	\[
	\sum n_P=0 \quad \text{and} \quad \sum n_PP=e.
	\]
	Where the first sum is in $\Z$ and the second one is addition in $\c$.
\end{lemma}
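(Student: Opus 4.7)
My plan is to prove both implications by exploiting the group law on $\c$ via the summation map
\[
\sigma\colon \Div^0(\c) \to \c, \qquad \sigma\!\left(\sum n_P (P)\right) := \sum n_P \cdot P,
\]
where the right-hand sum uses the group law of $\c$. Abel's theorem then amounts to the assertion that $\sigma$ descends to a group isomorphism $\mathrm{Pic}^0(\c) \isoar \c$.

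For the forward direction, I would first recall that every principal divisor has degree $0$: viewing $f \in \K(\c)^*$ as a morphism $\c \to \mathbb{P}^1$, the fibres over $0$ and $\infty$ have the same degree. To show $\sigma(\Div(f)) = e$, the cleanest route is to use the analytic model $\c = \C/\Lambda$ and the Weierstrass sigma function $\varsigma(z)$ (not to be confused with the $\sigma$ above), which is holomorphic on $\C$ with simple zeros exactly on $\Lambda$. Every elliptic function with zeros $(a_i)$ and poles $(b_j)$ of equal total multiplicity admits a canonical product expression $f(z) = \lambda \prod_i \varsigma(z - a_i) / \prod_j \varsigma(z - b_j)$, and the quasi-periodicity of $\varsigma$ forces $\sum a_i - \sum b_j \in \Lambda$, which is exactly the statement $\sigma(\Div f) = e$.

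For the converse, I would invoke Riemann--Roch. Given $D$ with $\deg D = 0$ and $\sigma(D) = e$, consider the divisor $D + (e)$, which has degree $1$ on the genus-one curve $\c$. The canonical divisor has degree $2g - 2 = 0$, so $K - D - (e)$ has degree $-1$ and hence $h^0(K - D - (e)) = 0$. Riemann--Roch then yields $h^0(D + (e)) = 1$, so there is a meromorphic function $g$, unique up to scalar, with $\Div(g) + D + (e) \geq 0$; this effective divisor of degree $1$ must be a single point $(Q)$. Hence $D + (e) - (Q) = -\Div(g)$ is principal, and applying $\sigma$ together with the forward direction gives $\sigma(D) - Q = e$ in $\c$, so $\sigma(D) = Q$. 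The hypothesis $\sigma(D) = e$ now forces $Q = e$, whence $D = \Div(1/g)$ is principal.

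The main obstacle is the forward identity $\sigma(\Div f) = e$, which carries the genuinely arithmetic content of Abel's theorem. The sigma-function route above is clean in the complex-analytic setting (the one we work in), but relies on the transcendental identification $\c \cong \C/\Lambda$. A purely algebraic alternative reduces, by linearity of $\sigma$, to the single case of divisors of the form $(P) + (Q) - (P+Q) - (e)$, which one shows are principal by exhibiting an explicit rational function on a Weierstrass model, namely the line through $P$ and $Q$ restricted to the cubic.
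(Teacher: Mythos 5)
The paper does not prove this statement at all: it is quoted as a classical fact, citing Silverman's \emph{Arithmetic of Elliptic Curves}, Corollary III.3.5. So any complete argument you give is ``extra'' relative to the paper; what you propose is essentially the standard textbook proof. Your converse direction (degree count plus Riemann--Roch applied to $D+(e)$, producing a point $Q$ with $D+(e)-(Q)$ principal, then invoking the forward direction to identify $Q=\sigma(D)$) is exactly the usual argument and is correct. The only caveat worth recording is in the forward direction.

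As written, the forward step is mildly circular. The assertion that \emph{every} elliptic function $f$ with zeros $(a_i)$ and poles $(b_j)$ of equal total multiplicity has a product expression $\lambda\prod_i\varsigma(z-a_i)/\prod_j\varsigma(z-b_j)$ is normally \emph{derived from} the congruence $\sum a_i\equiv\sum b_j \pmod\Lambda$: one first proves the congruence, then chooses lifts with $\sum a_i=\sum b_j$ exactly so that the product is genuinely $\Lambda$-periodic, and only then compares with $f$. For arbitrary lifts the product is not elliptic, so you cannot quote the product expression and then read off the congruence from quasi-periodicity without begging the question. Two clean repairs: (i) prove the congruence directly by integrating $\frac{1}{2\pi i}\,z\,\frac{f'(z)}{f(z)}$ around a fundamental parallelogram (residues give $\sum a_i-\sum b_j$, pairing opposite sides shows the integral lies in $\Lambda$); or (ii) keep arbitrary lifts, note that $f/g$ is entire and nonvanishing, hence $e^{cz+d}$ by comparing multipliers, and then eliminate $c$ between the two period conditions using Legendre's relation $\eta_1\omega_2-\eta_2\omega_1=2\pi i$ to conclude $\sum a_i-\sum b_j\in\Lambda$. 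Your purely algebraic aside also needs one more ingredient than stated: principality of the divisors $(P)+(Q)-(P+Q)-(e)$ shows that $P\mapsto[(P)-(e)]$ is a group homomorphism to $\mathrm{Pic}^0(\c)$, but to conclude that a principal $D$ has $\sigma(D)=e$ you must also know this map is injective (if $(P)-(Q)$ is principal the corresponding function would be a degree-one map to $\mathbb{P}^1$, forcing $P=Q$); with that added, it is precisely Silverman's proof of the cited corollary.
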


\begin{lemma}
	\label{lem:coordpoints}
	Given $P\in \c\gen n$  and a direction $i\geq 1$ there exists $h_{i,P}\in \K$ such that: 
	\begin{enumerate}
		\item for $n\neq 1$ the function $h_{i,P}$ has zeroes only at $\dip$ and $D_i$ both at first order.
		\item for $n=1$ the function $h_{i,e}$ has zeroes only at $D_i$ and $D_{i,Q}$ for $Q$ a point of exact order $2$, both at first order.
	\end{enumerate}
\end{lemma}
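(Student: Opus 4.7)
The strategy is to build an appropriate meromorphic function $g_{i,P}\in \K(\c)$ on the elliptic curve itself and pull it back along the projection $\pi_i:\X\to \c$ to define $h_{i,P}:=\pi_i^*(g_{i,P})$. Since $\pi_i$ is a smooth surjection, a simple zero of $g_{i,P}$ at a point $P'\in \c$ pulls back to a simple zero of $h_{i,P}$ along the fibre $\pi_i^{-1}(P')=D_{i,P'}$, and analogously for poles. For $h_{i,P}$ to belong to $\K$, every pole of $g_{i,P}$ must be located at a point of finite order of $\c$, so that the pole divisor of $h_{i,P}$ lies inside $\bigcup_j D_{ij}$.

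For case (1), with $P$ of exact order $n\neq 1$, I would seek $g_{i,P}$ with divisor $(P)+(e)-2(P_0)$ for a suitable torsion point $P_0$. By Abel's theorem (Lemma \ref{lem:abel}) such a function exists if and only if $P+e-2P_0=e$, i.e.\ $2P_0=P$ in the group law of $\c$. Since multiplication by $2$ is surjective on the torsion subgroup of $\c$, there exists $P_0\in \c[2n]$ with $2P_0=P$. Then $D_{i,P_0}$ is a component of $D_{i,j_0}$ for some $j_0\mid 2n$, hence lies in the allowed collection, and the pullback $h_{i,P}=\pi_i^*(g_{i,P})$ belongs to $\K$ with simple zeros along $D_{i,P}$ and $D_i$ as required.

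For case (2), with $P=e$, no meromorphic function on $\c$ admits a single simple zero, so I would instead realize the divisor $(e)+(Q)-2(Q_0)$ for a chosen $Q$ of exact order $2$. Abel's theorem now demands $2Q_0=Q$, which is satisfied by some $Q_0\in \c[4]$, again a torsion point. The pullback $h_{i,e}=\pi_i^*(g_{i,e})$ lies in $\K$ and has simple zeros along $D_i$ and $D_{i,Q}$, as needed. The only real point requiring care in either case is the torsion-ness of the auxiliary pole point $P_0$ (resp.\ $Q_0$); without this, the pullback would have poles outside the collection $\{D_{ij}\}$ and escape $\K$. This is guaranteed by the divisibility of the torsion subgroup of $\c$ by $2$, which is the single ingredient ensuring that Abel's criterion can be met using only torsion points.
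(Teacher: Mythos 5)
Your proof is correct and follows essentially the same route as the paper: use Abel's theorem (Lemma \ref{lem:abel}) to produce a function on $\c$ with a prescribed degree-zero divisor and pull it back along $\pi_i$, observing that the auxiliary pole point is torsion so the pullback lies in $\K$. The only immaterial difference is in case (2), where the paper realizes the divisor $(e)+(P_1)-(P_2)-(P_3)$ supported on the three points of exact order $2$, while you use $(e)+(Q)-2(Q_0)$ with $Q_0$ of order $4$; both choices satisfy the statement.
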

\begin{proof}
	If $P=e$ and $\{P_1,P_2,P_3\}$ are the three points of $\c\gen 2$ then the divisor 
	\[
	D_e:=(e)+(P_1)-(P_2)-(P_3)
	\]
	satisfies Lemma \ref{lem:abel} so there is a meromorphic function $h_e$ with divisor $D_e$. If we now pullback $h_e$ along the projection $\pi_i$ the function
	\[
	h_{i,e}:= \pi_i^*(h_e)\in \K
	\]
	satisfies (2).
	
	In the same way if $P\neq e$ pick a point $P'$ such that $2P'=P$, then the divisor
	\[
	D_P:=(e)+(P)-2(P')
	\]
	satisfies Lemma \ref{lem:abel} so there's a meromorphic function $h_P$ with divisor $D_P$. As before the pullback along the projection $\pi_i$ the function
	\[
	h_{i,P}:= \pi_i^*(h_P) \in \K
	\]
	satisfies (1).
\end{proof}
\begin{lemma}
	\label{lemma:julian}
	Consider two different arbitrary directions $a,b\geq 1$ such that $D_a$ and $D_b$ intersect only in the origin $O=\{e\}\times \{e\}$ of $\X$. Given a class $[f]\in \K/\O_{D_{a,n}}$ for $n\geq 1$, there exists a representative $f\in \K$ of that class such that $f$ has poles only along the $a$ and $b$ directions (i.e. it's regular on all $D_{ij}$ such that $i\neq a,b$).
\end{lemma}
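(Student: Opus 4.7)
The quotient $\K/\O_{D_{a,n}}$ classifies ``principal parts'' of functions along $D_{a,n}$. By Remark~\ref{rem:decomposition}, $D_{a,n} = \coprod_{P \in \c\langle n\rangle} D_{a,P}$ is a disjoint union of irreducible components, each a copy of $\c$; hence the class $[f]$ is determined by a finite collection of Laurent tails at the individual $D_{a,P}$, whose coefficients are meromorphic functions on $D_{a,P} \cong \c$ with poles only at torsion points. My goal is to realize any such tail at a given $D_{a,P}$ by a function in $\K_{ab}$ (the subring of $\K$ with poles only along the $a$- and $b$-directions) that is moreover regular at the other components $D_{a,Q}$; the sum over $P$ then produces the desired representative $g \in \K_{ab}$ with $f - g \in \O_{D_{a,n}}$.

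Two ingredients make this work. First, the hypothesis $D_a \cap D_b = \{O\}$ forces the intersection number $[D_a] \cdot [D_b] = 1$, so for any $P$ the intersection $D_{a,P} \cap D_b$ consists of a single point, making the restriction $\pi_b|_{D_{a,P}} \colon D_{a,P} \to \c$ a degree-$1$ morphism between elliptic curves, hence an isomorphism. Consequently every meromorphic function $c$ on $D_{a,P}$ with torsion-point poles lifts uniquely to some $\tilde c \in \K(\c)$ with $\pi_b^*(\tilde c)|_{D_{a,P}} = c$, and $\pi_b^*(\tilde c) \in \K_b$ introduces no poles outside the $b$-direction. Second, I need an ``isolating'' function $u_{a,P} \in \K_a$ with a simple zero on $D_{a,P}$ and non-vanishing at $D_{a,Q}$ for every $Q \in \c\langle n\rangle \setminus \{P\}$. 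For $P \neq e$, Lemma~\ref{lem:coordpoints}(1) supplies $u_{a,P} := h_{a,P}$: its zero locus along direction $a$ is $D_{a,P} \cup D_a$ (both simple) and its pole at $D_{a,P'}$ lies outside $\c\langle n\rangle$, so $u_{a,P}$ has the required isolation. The remaining case $P = e$ (forcing $n = 1$) is handled analogously using Lemma~\ref{lem:coordpoints}(2).

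With these ingredients, for any prescribed Laurent tail $\sum_{j=1}^{k} c_{-j}\, u_{a,P}^{-j}$ at $D_{a,P}$ I form
\[
  g_P := \sum_{j=1}^{k} \frac{\pi_b^*(\tilde c_j)}{u_{a,P}^j} \in \K_{ab}
\]
and solve for the $\tilde c_j \in \K(\c)$ recursively from top to bottom degree: since $u_{a,P}$ has a simple zero on $D_{a,P}$, the leading contribution of the $j$-th summand at order $u_{a,P}^{-j}$ is a unit multiple of $\tilde c_j|_{D_{a,P}}$, so at each step $\tilde c_j$ can be chosen to absorb the target coefficient minus the subleading contributions already produced by the previously-fixed $\tilde c_{j+1},\dots,\tilde c_k$. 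Summing over $P$ gives $g = \sum_P g_P \in \K_{ab}$ matching the principal part of $f$ at $D_{a,n}$, so $f - g \in \O_{D_{a,n}}$. The main technical obstacle is the precise construction in step two of the isolating $u_{a,P}$ having the correct separation along direction $a$ relative to $\c\langle n\rangle$; once this is in hand, the coefficient-matching recursion in the last step is triangular and routine.
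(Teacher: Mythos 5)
Your argument is essentially the paper's own proof: expand the class into principal parts component by component along $D_{a,n}$, using the functions $h_{a,P}$ of Lemma \ref{lem:coordpoints} as uniformizers and realizing the coefficients as pullbacks of meromorphic functions on $\c$ along the projection in the $b$-direction, which the paper arranges by an autoisogeny identifying $D_a=D_1$, $D_b=D_2$ rather than by your degree-one observation on $\pi_b|_{D_{a,P}}$. The only inaccuracy is harmless: when $n$ is odd the pole $P'$ of $h_{a,P}$ may in fact lie in $\c\gen n$, but only the zeros of $h_{a,P}$ (at $D_{a,P}$ and $D_a$) matter, since dividing by it can create poles on other components of $D_{a,n}$ only where it vanishes.
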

\begin{proof}
	Since $D_a$ and $D_b$ intersect only in $O$, we can change coordinates (autoisogeny) on $\X$ in such a way that $D_a=D_1=\{e\}\times \c$ and $D_b=D_2=\c\times \{e\}$. 
	
	For any point $P$ of finite order of $\c$ and direction $i\geq 1$, denote $\O_{D_{i,P}}$ the subring of $\K$ of those functions that are regular at $\dip$, and $m_{i,P}$ the ideal of those functions that vanishes at $\dip$. The pullback along the projection $\pi_2: \X \to \c$ induces an isomorphism
	\begin{equation}
		\label{eq:coeff}
		\pi_2^*:\K(\tp \c) \xrightarrow {\cong} \quotient {\O_{D_{1,P}}}{m_{1,P}}
	\end{equation}
	where $\K(\tp \c)$ is the ring of meromorphic functions of $\tp {\c}$ (Definition \ref{def:tptopc}), namely those meromorphic functions of $\c$ that have poles only at points of finite order of $\c$. First notice that with the TP-topology on $\c$ we pullback functions in $\K$ that are regular on $D_{1,P}$. To prove injectivity of \eqref{eq:coeff} consider the inclusion $\i {1,P}:D_{1,P}\cong \c \hookrightarrow \X$ and the pullback
	\[
	\quotient {\O_{D_{1,P}}}{m_{1,P}} \xrightarrow {\i {1,P}^*} \K(\tp \c).
	\]
	Composing the two maps $\i {1,P}^*\circ \pi_2^*=(\pi_2\circ \i {1,P})^*=\Id_{\c}^*$, so $\pi_2^*$ is injective. To prove surjectivity of \eqref{eq:coeff} pick $f\in \O_{D_{1,P}}$, then
	\[
	f-\pi_2^*\circ \i {1,P}^*(f) \in m_{1,P}
	\]
	because $\i {1,P}^*(f-\pi_2^*\circ \i {1,P}^*(f))=\i {1,P}^*(f)-\i {1,P}^*(f)=0$.
	
	Consider now a class $[f]\in \K/\O_{D_{1,n}}$ and an arbitrary representative $f\in \K$ of that class. If the class is the trivial one, pick any pullback $\pi_2^*(g)$ as representative. Otherwise $f$ is not regular on $D_{1,n}$, which means it has a pole on some of its components $D_{1,P}$ with $P\in \c\gen n$. Enumerate those components $\{D_{1,P_1}, D_{1,P_2}, \dots ,D_{1,P_r}\}$, and suppose $f$ has a pole of order $k_j\geq 1$ on $D_{1,P_j}$. Expand $f$ into its principal parts at every pole $D_{1,P_j}$ using coefficients in $\quotient {\O_{D_{1,P_j}}}{m_{1,P_j}}\cong \K(\tp \c)$ and the uniformizer $h_{1,P_j}$ of $D_{1,P_j}$ (By Lemma \ref{lem:coordpoints}: $m_{1,P_j}=\gen {h_{1,P_j}}$). Recursively:
	\begin{equation}
		\label{eq:step1}
		f\cdot h_{1,P_1}^{k_1}\in \O_{D_{1,P_1}}\setminus m_{1,P_1}
	\end{equation}
	therefore using the isomorphism \eqref{eq:coeff} there exists $g\in \K(\tp \c)$ whose pullback represents the same class modulo the ideal $m_{1,P_1}$:
	\[
	f\cdot h_{1,P_1}^{k_1}\equiv \pi_2^*g\quad (m_{1,P_1})
	\]
	so that dividing by $h_{1,P_1}$:
	\[
	f\cdot h_{1,P_1}^{k_1-1}- \frac{\pi_2^*g}{h_{1,P_1}} \in  \O_{D_{1,P_1}}\setminus m_{1,P_1}
	\]
	we can reapply \eqref{eq:step1} until we obtain an expansion at the pole $D_{1,P_1}$ with coefficients that are pullbacks of functions $g_{i,1}\in \K(\tp \c)$:
	\begin{equation}
		\label{eq:recursiv}
		f- \sum_{i=-k_1}^{-1}\frac{\pi_2^*(g_{i,1})}{h_{1,P_1}^i} \in  \O_{D_{1,P_1}}
	\end{equation}
	Now move on to the next pole $D_{1,P_2}$ and do the same using \eqref{eq:recursiv} instead of $f$. Notice that by Lemma \ref{lem:coordpoints} $h_{1,P_j}$ does not vanish at $D_{1,P_i}$ for $P_j\neq P_i$ both in $\c\gen n$, in this way we can deal with the pole at each $D_{1,P_j}$ separately without changing the order of pole on the other components. 
	
	Continue in this way expanding all the poles $\{D_{1,P_1},D_{2,P_2}, \dots , D_{r,P_r}\}$ until we have something that is regular on the whole $D_{1,n}$:
	\begin{equation}
		\label{eq:refinal}
		f-\sum_{i=-k_1}^{-1}\frac{\pi_2^*(g_{i,1})}{h_{1,P_1}^i}-\dots -\sum_{i=-k_r}^{-1}\frac{\pi_2^*(g_{i,r})}{h_{1,P_r}^i} \in  \O_{D_{1,n}}.
	\end{equation}
	This gives us an explicit function in the same class of $f$ modulo $\O_{D_{1,n}}$, but with poles only along the directions $1$ and $2$.
\end{proof}

\begin{proof}[Proof of Lemma \ref{lem:reduction}]
	Doing it for one $i$-component at a time we can suppose $X$ has only one component different from zero. Therefore $X$ has all components equal to zero except the $s$th component:
	\begin{equation}
		\label{eq:x}
		X_s = \frac {1}{x_{c_1}^{k_1}x_{c_2}^{k_2}\dots x_{c_r}^{k_r}}\tens s [f] \in \ehinv s \Q[\xa, \xb] \tens s \overline {\quotient {\K}{\O_{D_{s,n_s}}}}
	\end{equation}
	where at the denominator we have inverted $r\geq 0$ Euler classes $x_{c_1}, \dots x_{c_r}\in \eh s$ (Example \ref{ex:eulerclasses}) with $c_j\geq 1$ for $1\leq j\leq r$. Let us prove the lemma by induction on $r$.
	If there are no variables at the denominator ($r=0$), then $X$ is in the image of $\iota$. Therefore we only need to prove we can reduce $r$ by one.
	
	Let us first do the case when one of the directions $\{D_{c_1},\dots ,D_{c_r}\}$ intersects $D_s$ only in the origin $O=\{e\}\times \{e\}$, and without lost of generality suppose it is the direction $D_{c_1}$. Applying Lemma \ref{lemma:julian} we can find a representative $f\in \K$ of the class $[f]\in \quotient {\K}{\O_{D_{s,n_s}}}$ such that $f$ has poles only along the directions $s$ and $c_1$. Consider the element
	\[
	X':= \frac {1}{x_{c_1}^{k_1}x_{c_2}^{k_2}\dots x_{c_r}^{k_r}}\otimes f \in \eginv \Q[\xa,\xb] \otimes \overline{\K}.
	\]
	Then the element $X-\phi_0(X')$: 
	\begin{itemize}
		\item It has a zero in the component $s$.
		\item In the component $c_j$, with $1 \leq j \leq r$ it has less than $r$ Euler classes inverted at the denominator, since $x_{c_j}$ itself is not inverted any more.
		\item In all the other components it has a zero, since $f$ has poles only along the directions $s$ and $c_j$.
	\end{itemize}
	
	In case none of the directions $\{D_{c_1},\dots ,D_{c_r}\}$ intersects $D_s$ trivially, simply pick an extra direction $D_{d}$ that intersects $D_s$ and $D_{c_1}$ only in $O$. Now apply the same argument twice. The first time use $D_d$ instead of $D_{c_1}$ and notice that we have decreased the Euler classes inverted in all components except in the component $d$. Then apply the same argument with the component $d$ instead of $s$, noticing that now $D_d$ and $D_{c_1}$ intersect in only one point.
	
	We can apply the same argument in case of a numerator in the component $s$ of $X$ \eqref{eq:x} different from $1$. Note that $\Q[\xa, \xb]=\Q[x_s,x_{c_1}]$, and distributing sums and products we are left with the case
	\[
	X_s = x_{c_1}^k \tens s [f]
	\]
	with $D_{c_1}$ intersecting $D_s$ only in $O$. Apply again Lemma \ref{lemma:julian} with the directions $s$ and $c_1$ and proceed exactly as before.
\end{proof}

\section{Values on spheres of complex representations}
\label{sec:5}
The goal of this section is to determine the values of $E\c_{G}$ (constructed in Section \ref{sec:4} for the 2-torus $G=\T^2$) on spheres of complex representations. Here resides the connection with the geometry of the curve: 
\begin{theorem}
	\label{thm:computationspheres}
	If $V$ is a finite dimensional complex representation of $G$ with $V^G=0$, then:
	\begin{equation}
		\label{eq:valthe}
		E\c_{G}^n(S^V)\cong 
		\begin{cases*}
			H^0(\X, \odv)\oplus H^2(\X, \odv) & $n$ even \\
			H^1(\X, \odv) & $n$ odd.
		\end{cases*}.
	\end{equation}
\end{theorem}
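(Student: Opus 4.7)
The strategy is to apply the Adams spectral sequence \eqref{eq:ass} to compute $[S^V, E\c_G]^G_{-n}$, using the injective resolution \eqref{eq:injres4} of $\pai{\A}(E\c_G) = H_*(E\c_G)$ provided by Lemma \ref{lem:injres} and formality of $E\c_G$. Since this resolution has length two, the $E_2$-page has at most three non-zero columns, so the spectral sequence collapses and we only need the three $\Ext$-groups $\Ext^{s,*}_{\A(G)}(\pai{\A}(S^V), H_*(E\c_G))$ for $s=0,1,2$. I will identify the complex
\begin{equation*}
\Hom_{\A(G)}(\pai{\A}(S^V), \inj_0) \to \Hom_{\A(G)}(\pai{\A}(S^V), \inj_1) \to \Hom_{\A(G)}(\pai{\A}(S^V), \inj_2) \to 0
\end{equation*}
with the $2$-periodic version of the global sections of the Cousin complex of $\phi_*\odv$, and then invoke the flabby-resolution machinery of Section \ref{sec:3} to conclude that its cohomology equals $H^*(\X, \odv)$.

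\textbf{First step: the algebraic model of $S^V$.} For $V = \sum_z \alpha_z z$ with $V^G = 0$, the module $\pai{\A}(S^V)$ is concentrated in a single even internal degree and its component at any connected subgroup $H \leq G$ is a rank-one free $\hbg H$-module generated by the Euler class of the fixed subrepresentation $V^H$; the Euler classes $e(z)^{\alpha_z}$ for $z$ with $H \not\subseteq \Ker z$ enter as units after the appropriate localisation. Consequently, applying $\Hom_{\A(G)}(\pai{\A}(S^V), -)$ to an injective of the shape $f_H(T)$ produces a shift of $T$ by those Euler classes; all internal gradings stay even, which will be essential for the final $2$-periodic bookkeeping.

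\textbf{Second step: matching with the twisted Cousin complex.} Using the correspondence of Lemma \ref{lem:ccmap} between Euler classes $e(z)$ and completed coordinates vanishing to first order along $\XF(\Ker z)$, the twist by $V$ on the algebraic side should become the twist by the line bundle $\odv$ on the geometric side: meromorphic functions in $\ov \K$ are replaced by sections of $\odv$, the quotient $\ov{\K/\O_{\dij}}$ by its $\odv$-twisted analogue, and similarly for $\ov{\h{2}{F}}$. The resulting complex is then precisely $\Gamma(\X, CC^*(\phi_*\odv))$ in its $2$-periodic version, with the differentials induced from $\phi_0, \phi_1$ matching the Cousin differentials via \eqref{eq:phi0} and \eqref{eq:phi1}. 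This identification is the technical heart of the theorem and is where I expect the main obstacle to lie: one must carefully track how the Euler-class actions on the pieces of $\inj_\bullet$ conspire with the structure maps of $\pai{\A}(S^V)$ to shift exactly the order of poles recorded by $D_V$.

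\textbf{Third step: sheaf-cohomological conclusion.} Being a line bundle on the regular surface $\zar \X$, $\odv$ is Cohen--Macaulay with respect to the codimension filtration (cf.\ \cite[Example pg.\ 239]{hartshorne:residues}); by Corollary \ref{cor:cohmac} its pushforward $\phi_*\odv$ is then Cohen--Macaulay on $\tp \X$, so its Cousin complex is a flabby resolution by Proposition \ref{def:cohen}. Taking global sections and applying Corollary \ref{cor:coh} yields $H^*(\X, \odv)$ as the cohomology of the complex above, so $\Ext^s \cong H^s(\X, \odv)$ for $s=0,1,2$. Because the injectives are $2$-periodic and $\pai{\A}(S^V)$ sits in an even degree, each $\Ext^s$-group contributes to total degrees $n$ with $n \equiv s \pmod 2$; hence $n$-even receives $H^0 \oplus H^2$ and $n$-odd receives $H^1$, giving \eqref{eq:valthe}.
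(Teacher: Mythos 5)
Your proposal is correct and follows essentially the same route as the paper: the Adams spectral sequence with the length-two injective resolution from Lemma \ref{lem:injres}, computation of the $\Ext$-groups via the adjunction \eqref{eq:adjunction} (producing Euler-class shifts from the structure maps of $\pai{\A}(S^V)$), identification of the resulting complex with the global sections of the Cousin complex of $\odv$, and the parity/collapse bookkeeping giving $H^0\oplus H^2$ in even degrees and $H^1$ in odd degrees. The step you flag as the technical heart is exactly what the paper carries out in Section \ref{sec:5} (Theorem \ref{thm:ext} together with the twisted differentials \eqref{eq:twistd0}--\eqref{eq:twistd1}), so your outline matches the paper's proof in both structure and substance.
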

Where $\X=\c\times \c$, $\c$ is our fixed elliptic curve, and $D_V$ is a divisor of $\X$ defined as follows:
\begin{definition}
	\label{def:dv}
	If $V$ is a finite dimensional complex representation of $G$ with $V^G=0$ and dimension function:
	\begin{equation}
		\label{eq:vij}
		v_{ij}:=\Dim_{\mathbb{C}}(V^{H_i^j}) \qquad ij\geq 1
	\end{equation}
	where $H_i^j$ is the subgroup of $G$ with $j$ connected components and identity component $H_i$, then
	\begin{equation}
		\label{eq:dv}
		D_V=\sum_{i,j\geq 1}v_{ij}D_{ij}
	\end{equation}
\end{definition}
\begin{rem}
	Notice that if we write $V$ using characters $z_i^n$ (see Definition \ref{def:zij}):
	\[
	V=\sum_{i\geq 1}\sum_{n\neq 0}a_{i,n}z_i^n
	\]
	then the associated divisor is obtained simply applying the functor $\XF$ (see \eqref{eq:char} and \eqref{eq:pii}): 
	\[
	D_V:=\sum_{i\geq 1}\sum_{n\neq 0}a_{in}\var {H_i^{\modulo n}}.
	\]
\end{rem}
\begin{definition}
	Given a divisor $D_V$ of the kind \eqref{eq:dv}, define the sheaf $\odv$ on $\tp \X$ twisted by the line bundle $D_V$ to be the subsheaf of the structure sheaf $\O=\tp \ox$ with values on TP-opens $U$:
	\[
	\odv (U)=\{f\in \K \mid \Div(f)-D_V\geq 0 \;\text{on}\; U\}.
	\]
	(Recall the definition of $\K$ \eqref{eq:k}).
\end{definition}

The strategy to prove \eqref{eq:valthe} is to use the Adams spectral sequence \eqref{eq:ass2} for the homology functor $\pi_*^{\A}$ \cite[Theorem 1.1]{john:torusI}. In our case we obtain a strongly convergent Adams spectral sequence:
\begin{equation}
	\label{eq:adamss}
	\Ext^{s,t}_{\A}(S^V,E\c_G) \Longrightarrow [S^V,E\c_G^{\Sp}]^G_{t-s}=E\c_G^{t-s}(S^V).
\end{equation}
\begin{nota}
	In \eqref{eq:adamss} and for the rest of the section we drop the algebraic model notation and call $S^V:=\pi_*^{\A}(S^V)$ both the spectrum and the corresponding object in $\A(G)$. We also denote $E\c_G^{\Sp}$ the corresponding spectrum of $E\c_{G}$.
\end{nota}
\begin{rem}
	Recall that the index $s$ refers to the $s$-th $\Ext$-group in the graded abelian category $\A(G)$ while $t$ index the grading of these groups, that are graded since the objects $\pi_*^\A(\_)$ are. 
\end{rem}
The bulk of the section will be the computation of the $\Ext$ groups of \eqref{eq:adamss}:
\begin{theorem}
	\label{thm:ext}
	For $V$ as in Theorem \ref{thm:computationspheres} we have the isomorphism of graded groups:
	\begin{equation}
		\label{eq:secondpage}
		\Ext^s_{\A}(S^V,E\c_G)\cong \overline{H^s(\X, \odv)}.
	\end{equation}
	Where on the right in \eqref{eq:secondpage} we have the 2-periodic version of those cohomology groups.
\end{theorem}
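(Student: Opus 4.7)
The plan is to compute $\Ext^s_{\A}(S^V, E\c_G)$ by exploiting the injective resolution of $H_*(E\c_G)$ furnished by Lemma \ref{lem:injres} together with the formality of $E\c_G$ proved in Section \ref{subsec:object}. Since $E\c_G$ is quasi-isomorphic to $H_*(E\c_G)$ in $d\A(G)$ and \eqref{eq:injres4} is an injective resolution in $\A(G)$, the Ext groups are computed as the cohomology of the complex obtained by applying $\Hom_{\A}(S^V, -)$ to
\begin{equation*}
\inj_0 \xrightarrow{\phi_0} \inj_1 \xrightarrow{\phi_1} \inj_2.
\end{equation*}
The whole proof then hinges on identifying this three-term complex with the global sections of the Cousin complex of the twisted sheaf $\odv$ on $\tp \X$, whose cohomology is $H^*(\X, \odv)$ by the results of Section \ref{sec:3}.

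First I would compute each $\Hom_{\A}(S^V, \inj_n)$ individually. The object $\pi_*^{\A}(S^V)$ in $\A(G)$ is controlled, subgroup by subgroup, by the Euler class of the fixed-point representations $V^H$; since $V^G = 0$, at $H_i^j$ this Euler class is the monomial $c_{ij}^{v_{ij}}$ in $\hbg{H_i^j} \cong \Q[c_{ij}]$, and analogously at a finite subgroup $F$ it is a monomial in $\Q[x_A, x_B]$ whose exponents record the multiplicities $v_{ij}$ of the $H_i^j$ containing $F$. Using the standard adjunctions for the functors $f_H$ recalled in the Appendix, together with Lemmas \ref{lem:inj1} and \ref{lem:action2} which express the $\oefh{i}$-action and $\of$-action via the completed coordinates $\hat t_{ij}$, one identifies $\Hom_{\A}(S^V, \inj_n)$ in each even degree with multiplication by the Euler class of $V$ on the modules appearing in the Cousin complex. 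Concretely this should yield $\Gamma(\X, \odv)$ for $n=0$, $\bigoplus_{ij} \h 1{\dij}(\odv)$ for $n=1$, and $\bigoplus_F \h 2{F}(\odv)$ for $n=2$. Odd degrees contribute zero, producing the claimed 2-periodicity.

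Next I would verify that the induced differentials on these $\Hom$ groups agree with the Cousin differentials of $\odv$. This follows essentially by construction: $\phi_0$ and $\phi_1$ were built in Sections \ref{sub:phi0} and \ref{sub:phi1} precisely by extending the Cousin differentials $d_0$ and $d_1$ of $\tp \ox$ to the appropriate Euler-class localizations, so twisting by the sub-sheaf inclusion $\odv \hookrightarrow \O$ intertwines the two constructions. Therefore $\Hom_{\A}(S^V, \inj_{\bullet})$ is isomorphic, in each even degree, to the global sections of the Cousin complex
\begin{equation*}
\CC^0(\odv) \to \CC^1(\odv) \to \CC^2(\odv) \to 0.
\end{equation*}
Since $\odv$ is the pushforward along $\phi$ of a Zariski line bundle on $\X = \c \times \c$, and such a line bundle is Cohen-Macaulay with respect to the codimension filtration, the arguments of Corollary \ref{cor:cohmac} and Corollary \ref{cor:exactness} apply verbatim to show that its Cousin complex on $\tp \X$ is a flabby resolution of $\odv$. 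Hence its global sections compute $H^*(\X, \odv)$, and 2-periodization yields \eqref{eq:secondpage}.

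The main obstacle is precisely the identification of $\Hom_{\A}(S^V, \inj_n)$ with the twisted local cohomology modules. This requires carefully tracking how multiplication by the Euler class of $V$, expressed through the completed coordinates $\hat t_{ij}^{v_{ij}}$, interacts with the $\oefh{i}$- and $\of$-actions defined through these same completed coordinates, and how the compatibility across different codimensions established in Lemma \ref{lem:ccmap} survives the twist. In particular one must confirm that this action cuts out exactly the meromorphic functions whose divisor dominates $-D_V$, as encoded in Definition \ref{def:dv}, so that the resulting complex is literally the twisted Cousin complex of $\odv$ rather than merely a complex whose cohomology coincides accidentally.
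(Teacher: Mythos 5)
Your proposal follows essentially the same route as the paper's proof: apply $\Hom_{\A}(S^V,\_)$ to the injective resolution furnished by formality, use the adjunction to identify the terms with the (Euler-class-twisted) Cousin data, match the induced maps with the twisted Cousin differentials $d_0^V$, $d_1^V$ via the completed coordinates, and conclude by flabbiness of the Cousin resolution of $\odv$. The only slip is your claim that the $n=0$ term is $\Gamma(\X,\odv)$: in fact $\Hom_{\A}(S^V,\inj_0)\cong \ov{\K}=\overline{\h 0{\X}(\odv)}$, the degree-zero Cousin term, and $H^0(\X,\odv)$ only appears as the kernel of the induced map --- consistent with your later, correct identification of the whole complex with the global sections of the Cousin complex of $\odv$.
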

As an immediate consequence we obtain \eqref{eq:valthe}:
\begin{proof}[proof of Theorem \ref{thm:computationspheres}]
	The isomorphism \eqref{eq:secondpage} computes the second page of the Adams Spectral sequence \eqref{eq:adamss}. This second page has only the first three rows different from zero (namely for $s=0,1,2$ in \eqref{eq:secondpage}), because the algebraic surface $\X$ has dimension $2$ and therefore its cohomology groups vanishes in higher degrees. Moreover in these three rows we have a chess pattern of zeroes and cohomology groups since the groups \eqref{eq:secondpage} are 2-periodic. This yields the Adams spectral sequence \eqref{eq:adamss} to collapse at the second page, since all the differentials in this page are trivial. In conclusion in this second page we find in the even columns:
	\[
	H^0(\X, \odv)\oplus H^2(\X, \odv)
	\]
	while in the odd ones:
	\[
	H^1(\X, \odv).
	\]
\end{proof}

We devote the rest of this section in proving Theorem \ref{thm:ext}. For this task in \ref{sub:odv} we discuss the Cousin complex of the sheaf $\odv$ needed in \ref{sub:ass} to compute the second page of the Adams spectral sequence. We conclude in \ref{sub:virtualnegative} briefly extending the computation on virtual negative complex representations \eqref{eq:negativevalues}.

\subsection{Cousin complex of $\odv$}
\label{sub:odv}
Here we briefly construct the Cousin complex for $\odv$ exactly as in Sections \ref{sec:2} and \ref{sec:3} for the structure sheaf $\O$. Therefore we recall from those sections how the results change for the sheaf twisted by the line bundle $\odv$. 

Let us start in computing the stalks at the various points $x\in \tp \X$:
\begin{itemize}
	\item If $x=\eta(\tp \X)$ is a generic point of the whole space, in \eqref{eq:stalkgen} nothing changes and we still obtain \eqref{eq:k}:
	\begin{equation}
		\label{eq:cod0stalk}
		\odv_x=\K
	\end{equation}
	\item If $x= \eta(\dij)$ is a generic point of a generating closed subset, by \eqref{eq:dv} $\dij$ appears with coefficient $v_{ij}$ in $D_V$, so \eqref{eq:odij} becomes:
	\begin{equation}
		\label{eq:cod1stalk}
		\odv_{x} =\{\Div(f)\geq v_{ij} \;\text{on} \; \dij \} = \tijvij \O_{\dij}
	\end{equation}
	since $\tij$ defined in \eqref{eq:tij} generates $\mij$.
	\item If $x \in \bvar F$
	(which automatically makes it also a generic point for $\bvar F$), by Lemma \ref{lem:fin} for every direction $i\geq 1$, only $D_{i,n_i}$ contains $\bvar F$ which by \eqref{eq:dv} appears with coefficient $v_{i,n_i}$ in $D_V$. Therefore \eqref{eq:of} becomes:
	\begin{equation}
		\label{eq:isostalkcod2}
		\odv _{x} =\{ f\in \K \mid \Div(f)\geq v_{i,n_i}\, \text{on} \, D_{i,n_i} \, \text{for all} \, i\geq 1 \}=(\prod_{i\geq 1}t^{v_{i,n_i}}_{i,n_i})\O_F
	\end{equation}
\end{itemize}

Proposition \ref{prop:cousincomplex} apply as well to the sheaf $\odv$ and therefore we can consider its Cousin complex, that can be written as the one for $\O$ \eqref{eq:cc2}:

\begin{equation}
	\label{eq:cctwisted}
	\odv \rightarrow \iota_{\X}(\h 0{\X}(\odv)) \xrightarrow{d_0^V} \bigoplus_{ij\geq 1}\iota_{\dij} (\h 1{\dij}(\odv)) \xrightarrow{d_1^V} \bigoplus_{F}\iota_{F}(\h 2F(\odv))
\end{equation}

For each local cohomology term appearing in \eqref{eq:cctwisted} we explicit an isomorphism with the corresponding term in \eqref{eq:cc2}:
\begin{itemize}
	\item In codimension $0$ by \eqref{eq:cod0stalk}:
	\begin{equation}
		\label{eq:h0twistediso}
		\h 0{\X}(\odv)= \odv_{\eta(\tp \X)} = \K
	\end{equation}
	\item In codimension $1$ we have the chain of isomorphisms:
	\begin{equation}
		\label{eq:h1twistediso}
		\h 1{\dij}(\odv)\cong H^1_{\mij}((\odv)_{\eta(\dij)})\cong \frac {\tijvij \O_{\dij}[t^{-1}_{ij}]}{\tijvij \O_{\dij}} = \frac {\K}{\tijvij \O_{\dij}} \cong \frac {\K}{\O_{\dij}}.
	\end{equation}
	The first isomorphism is due by  \eqref{eq:rightderived1} since $\odv$ is the pushforward along $\phi$ of the Zariski twisted sheaf $\zar \ox(-D_V)$. Te second is the computation of local cohomology by means of the stable Koszul complex (\cite{huneke:lectures}), since $\tij$ generates $m_{ij}$, and we have computed the stalk in \eqref{eq:cod1stalk}. The final isomorphism we define it in the following way using the completed coordinates \eqref{eq:hatij}:
	\begin{equation}
		\begin{split}
			 \quotient {\K}{\tijvij \odij}&\xrightarrow{\cong} \quotient {\K}{\odij}\\
			[f] &\mapsto [\hat t^{-v_{ij}}_{ij} \cdot f].
		\end{split}
	\end{equation}
Notice it is well defined exactly as in \eqref{eq:phi0}.
\item In codimension $2$ in the same way we have the chain of isomorphisms: 
	\begin{equation}
		\label{eq:h2twistediso}
		\h 2F(\odv)\cong H^2_{m_F}((\odv)_{\eta(\bvar F)})\cong H^2_{m_F}((\prod_{i\geq 1}t^{v_{i,n_i}}_{i,n_i})\O_F) \cong H^2_{m_F}(\O_F).
	\end{equation}
Like \eqref{eq:h1twistediso} the first isomorphism is due by \eqref{eq:rightderived1} and the second one is the computation of the stalk \eqref{eq:isostalkcod2}. We can define the final isomorphism in the following way:
	\begin{equation}
		\begin{split}
			H^2_{m_F}((\prod_{i\geq 1}t^{v_{i,n_i}}_{i,n_i})\O_F) &\xrightarrow{\cong} H^2_{m_F}(\O_F)\\
			\alpha &\mapsto (\prod_{i\geq 1}\hat t^{-v_{i,n_i}}_{i,n_i})\alpha.
		\end{split}
	\end{equation} 
Notice it is well defined as in Remark \ref{rem:capped}.
\end{itemize}

Now Corollary \ref{cor:exactness} holds as well for $\odv$ since $\zar \ox(-D_V)$ is Cohen-Macaulay with respect to the codimension filtration:
\begin{corollary}
	\label{cor:twistedexactness}
	The Cousin complex \eqref{eq:cctwisted} of $\odv$ is a flabby resolution of $\odv$. 
\end{corollary}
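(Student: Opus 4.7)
The plan is to mirror the proof of Corollary \ref{cor:exactness} verbatim, with $\odv$ in the role previously played by $\O$. The two ingredients I need are (i) that $\zar \ox(-D_V)$ is Cohen-Macaulay with respect to the codimension filtration on $\zar \X$, and (ii) that this property transfers to $\phi_*(\zar \ox(-D_V))=\odv$ via Corollary \ref{cor:cohmac}.

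For (i), note first that $\zar \ox(-D_V)$ is a coherent (hence quasi-coherent) sheaf, indeed a line bundle, since $D_V$ is a Cartier divisor on the smooth surface $\X=\c\times \c$. The Cohen-Macaulay property singled out in Proposition \ref{def:cohen} is characterised by vanishing of the local cohomology sheaves $\underline H^i_{Z^n}(\zar \ox(-D_V))=0$ for $i\neq n$; by the description of these sheaves through stalks, the property is local on $\zar \X$. Since $\zar \ox(-D_V)$ is Zariski-locally isomorphic to $\zar \ox$, and the latter is Cohen-Macaulay with respect to the codimension filtration \cite[Example pg. 239]{hartshorne:residues} (the scheme $\zar \X$ being a product of smooth curves, hence regular), $\zar \ox(-D_V)$ inherits this property.

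For (ii), I apply Corollary \ref{cor:cohmac} with $\F=\zar \ox(-D_V)$: the hypotheses that $\F$ be quasi-coherent and Cohen-Macaulay with respect to the codimension filtration on $\zar \X$ are both satisfied, so the pushforward $\odv=\phi_*(\zar \ox(-D_V))$ is Cohen-Macaulay with respect to the codimension filtration on $\tp \X$. By the equivalence (3) of Proposition \ref{def:cohen}, this is precisely the statement that the Cousin complex \eqref{eq:cctwisted} of $\odv$ is a flabby resolution of $\odv$.

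There is no real obstacle: the entire machinery (change of topology, preservation of Cohen-Macaulayness under $\phi_*$, explicit form of the Cousin complex) was set up for arbitrary quasi-coherent Cohen-Macaulay sheaves in Sections \ref{sec:2} and \ref{sec:3}, precisely so that both $\O$ and its twists by line bundles could be handled uniformly. The only mild verification is the locality argument that reduces Cohen-Macaulayness of $\zar \ox(-D_V)$ to that of $\zar \ox$, which is immediate from the punctual nature of the local cohomology invariants in Proposition \ref{def:cohen}.
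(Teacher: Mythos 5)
Your proposal is correct and follows essentially the same route as the paper, which likewise deduces the corollary by asserting that $\zar \ox(-D_V)$ is Cohen-Macaulay with respect to the codimension filtration and then invoking Corollary \ref{cor:cohmac} together with condition (3) of Proposition \ref{def:cohen}. The only difference is that you spell out the local-triviality argument (a line bundle is Zariski-locally isomorphic to $\zar \ox$, and the Cohen-Macaulay condition is local) that the paper leaves implicit.
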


As a consequence we have also Proposition \ref{prop:ccstalk} for $\odv$ where we use the isomorphisms
\eqref{eq:h0twistediso}, \eqref{eq:h1twistediso} and \eqref{eq:h2twistediso} to describe the local cohomology terms:
\begin{proposition}
	\label{prop:twistedccstalk}
	Let $F$ be a finite subgroup of $G$ and $x=\eta (\bvar F)$ be the generic (and only) point of $\bvar F$ in the Kolmogorov quotient $\tp \X$. Then the TP-stalk at $x$ of the Cousin complex \eqref{eq:cctwisted} is the exact sequence:
	\begin{equation}
		\label{eq:twistedccstalk}
		\odv_x \rightarrowtail \K \xrightarrow{d_0^V} \bigoplus_{i\geq 1} \quotient {\K}{\O_{D_{i,n_i}}} \xrightarrow{d_1^V} \h 2F(\O) \to 0.
	\end{equation}
\end{proposition}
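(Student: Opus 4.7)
The plan is to imitate step-by-step the proof of Proposition \ref{prop:ccstalk}, replacing the structure sheaf $\O$ by the twisted sheaf $\odv$ and invoking the isomorphisms \eqref{eq:h0twistediso}, \eqref{eq:h1twistediso}, \eqref{eq:h2twistediso} already established in Subsection \ref{sub:odv}. The strategy breaks into two parts: first identify each term of the TP-stalk of the Cousin complex \eqref{eq:cctwisted} at $x = \eta(\bvar F)$; second deduce exactness from Corollary \ref{cor:twistedexactness}.

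For the identification, I would proceed codimension by codimension. In codimension $0$ the Cousin term is already the constant sheaf $\iota_{\X}(\h 0{\X}(\odv))$, whose stalk at $x$ is $\h 0{\X}(\odv) = \odv_{\eta(\tp \X)} = \K$ by \eqref{eq:h0twistediso}. In codimension $1$, the stalk at $x$ of $\bigoplus_{ij\geq 1}\iota_{\dij}(\h 1{\dij}(\odv))$ picks out only those indices $(i,j)$ for which $\bvar F\subseteq \dij$; by Lemma \ref{lem:fin} the unique such index for each direction $i$ is $j=n_i$, so the stalk collapses to $\bigoplus_{i\geq 1}\h 1{D_{i,n_i}}(\odv)$, which by \eqref{eq:h1twistediso} is isomorphic to $\bigoplus_{i\geq 1}\K/\O_{D_{i,n_i}}$. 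In codimension $2$, the stalk at $x$ of $\bigoplus_{F'}\iota_{F'}(\h 2{F'}(\odv))$ keeps only the summand indexed by $F$ itself, since by Lemma \ref{lem:disj} the sets $\bvar{F'}$ and $\bvar F$ are disjoint whenever $F'\neq F$; using \eqref{eq:h2twistediso} this summand is isomorphic to $\h 2F(\O)$.

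For exactness, I would invoke Corollary \ref{cor:twistedexactness}: the Cousin complex \eqref{eq:cctwisted} is a flabby resolution of $\odv$, hence the augmented complex is exact as a sequence of sheaves on $\tp \X$, and exactness passes to stalks. The augmentation map $\odv \to \iota_{\X}(\h 0{\X}(\odv))$ induces at $x$ the natural injection $\odv_x \hookrightarrow \K$ (the stalk $\odv_x$ was computed in \eqref{eq:isostalkcod2}). Combined with the identifications above, this yields exactly the claimed sequence \eqref{eq:twistedccstalk}.

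I do not anticipate any serious obstacle; the argument is essentially a bookkeeping exercise assembling results already in place. The only subtlety worth flagging is that the isomorphisms \eqref{eq:h1twistediso} and \eqref{eq:h2twistediso} are built using multiplication by appropriate powers of the completed coordinates $\hat t_{i,n_i}$, so if one later needs to track the specific form of $d_0^V$ and $d_1^V$ after these identifications (rather than merely that the resulting sequence is exact), some care with these isomorphisms would be required. For the statement as given, this level of compatibility is not needed.
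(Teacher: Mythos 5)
Your proposal is correct and follows essentially the same route as the paper: the paper deduces Proposition \ref{prop:twistedccstalk} by repeating the proof of Proposition \ref{prop:ccstalk}, identifying the three stalk terms via \eqref{eq:h0twistediso}, \eqref{eq:h1twistediso}, \eqref{eq:h2twistediso} together with Lemmas \ref{lem:fin} and \ref{lem:disj}, and obtaining exactness from the flabby-resolution statement (Corollary \ref{cor:twistedexactness}). Your closing remark about tracking $d_0^V$ and $d_1^V$ through the twisting isomorphisms is also apt, since the paper records exactly that in the remark following the proposition.
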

\begin{rem}
	Notice \eqref{eq:twistedccstalk} has the same terms as \eqref{eq:ccstalk} but different maps:
	\begin{equation}
		\label{eq:twistd0}
		d_0^V(f)=[\hatij^{-v_{ij}}\cdot f] \in \K/\odij.
	\end{equation}
\begin{equation}
	\label{eq:twistd1}
	d_1^V(\{[f_i]\}_{i})=[\{[f_i\cdot \prod_{s\neq i}\hat{t}_{s,n_s}^{-v_{s,n_s}}]\}_i] \in \h 2F(\O).
\end{equation}
Where we have used \eqref{eq:h2description} to describe $\h 2F(\O)$.
\end{rem}
We conclude with the global sections of the Cousin complex \eqref{eq:cctwisted}:
\begin{equation}
	\label{eq:ccglobsec}
	\Gamma(\odv) \longrightarrow \K \xrightarrow{d_0^V} \bigoplus_{i\geq 1} (\bigoplus_{j\geq 1} \K/\odij) \xrightarrow{d_1^V} \bigoplus_{F} \h 2F(\O) \rightarrow 0
\end{equation} 
Notice that the terms are the same as the global sections \eqref{eq:cc3} for $\O$ but with different maps.
\subsection{Computing the Adams spectral sequence}
\label{sub:ass}
We turn now in computing the second page of the Adams spectral sequence \eqref{eq:adamss}, namely proving Theorem \ref{thm:ext}.

\begin{proof}[Proof of Theorem \ref{thm:ext}]

The aim is to explicitly compute the following sequence:
\begin{equation}
	\label{eq:extcomp}
		\Hom_{\A}(S^V, \inj_0) \xrightarrow{\phi_0'} \Hom_{\A}(S^V, \inj_1) \xrightarrow{\phi_1'} \Hom_{\A}(S^V, \inj_2) \rightarrow 0
\end{equation}
obtained applying the functor $\Hom_{\A}(S^V,\_)$ to the injective resolution \eqref{eq:injres4} of $H_*(E\c_{G})$ (the maps $\phi_0'$ and $\phi_1'$ are the ones induced by $\phi_0$ and $\phi_1$). For this task it is essential the algebraic model $S^V=\pi_*(S^V)\in \A(G)$ that we discuss in detail in the Appendix \ref{sub:spheres}.

By using the adjunction \eqref{eq:adjunction} and the explicit form \eqref{eq:injres1} of the injective resolution of $H_*(E\c_{G})$ we can compute each term of \eqref{eq:extcomp}:
\begin{equation}
	\begin{split}
		\Hom_{\A}(S^V, f_G(\ov \K)) &\cong \Hom_{\Q}(\Q, \ov \K)\cong \ov \K \\
		\Hom_{\A}(S^V, \bigoplus_{i\geq 1}f_{H_i}(T_i)) &\cong \bigoplus_{i\geq 1}\Hom_{\oefh i}(\susp {V^{H_i}}\oefh i T_i)\cong \bigoplus_{i\geq 1}\susp {-V^{H_i}}T_i, \\
		\Hom_{\A}(S^V, f_1(N)) &\cong \Hom_{\of}(\susp {V}\of, N)\cong \susp {-V}N
	\end{split}
\end{equation}
where for the second equation we obtain the direct sum for $i\geq 1$ instead of the product since $S^V$ is a small object. 

The sequence \eqref{eq:extcomp} than takes the form
\begin{equation}
	\label{eq:homcomp}
		\ov \K \xrightarrow{\phi_0'} \bigoplus_{i\geq 1} \susp {-V^{H_i}}T_i \xrightarrow{\phi_1'}  \susp {-V}N \rightarrow 0
\end{equation}
with maps:
\begin{equation}
	\label{eq:phi0prime}
	\begin{split}
		\phi_0'(f) &=\{\phi_0^i(e(V^{H_i})^{-1}\otimes f)\}_{i\geq 1} \\
		\phi_1'(\{\alpha_i\}_{i\geq 1}) &=\phi_1(\{e(V-V^{H_i})^{-1}\tens i\alpha_i\}_i)
	\end{split}
\end{equation}
where $\phi_0$ and $\phi_1$ satisfy the commutative diagrams \eqref{eq:comm0} and \eqref{eq:comm1} and the Euler classes $e(V^{H_i})^{-1}\in \eghinv i\oefh i$ and $e(V-V^{H_i})^{-1}\in \ehinv i \of$ pop out from the structure maps of the object $S^V$ \eqref{eq:structuremaps}.

The sequence \eqref{eq:homcomp} is $2$-periodic, therefore we can work at the  $0$-th level. Note as well that the desuspensions in the second and third term shift $2$-periodic objects by an even degree, therefore the term at each level does not change. The $0$-th level of \eqref{eq:homcomp} is:
\begin{equation}
		\label{eq:homcomp2}
		\K \xrightarrow{\phi_0'} \bigoplus_{i\geq 1} (\bigoplus_{j\geq 1} \K/\odij) \xrightarrow{\phi_1'}  \bigoplus_{F} \h 2F(\O) \rightarrow 0.
\end{equation}

To conclude, we only need to prove that \eqref{eq:homcomp2} is the sequence of global sections \eqref{eq:ccglobsec} of the Cousin complex of $\odv$, since by Corollary \eqref{cor:twistedexactness} the Cousin complex is a flabby resolution and therefore the homology of its global sections \eqref{eq:homcomp2} gives us the desired cohomology groups $H^*(\X, \odv)$ (Recall from Corollary \ref{cor:coh} that the cohomology of $\odv$ is the same for both topologies).

We only need to check that the maps in \eqref{eq:homcomp2} and \eqref{eq:ccglobsec} are the same. By \eqref{eq:phi0prime} the map $\phi_0'$ on every component $(i,j)$ is the map:
\begin{equation*}
	\phi_0'(f)=\phi_0(c_{ij}^{-v_{ij}}\otimes f)=[ \hatij^{-v_{ij}}\cdot f] = d_0^V(f),
\end{equation*}
where the second equality follows by definition of $\phi_0$ \eqref{eq:phi0}, and the last one is \eqref{eq:twistd0}.

By \eqref{eq:phi0prime} the $F$th component of $\phi_1'$ for every finite subgroup $F$ of $G$ is the map:
\begin{equation*}
	\phi_1'(\{[f_i]\}_{i}) = \phi_1(\{ \prod_{s\neq i}x_s^{-v_{s,n_s}} \tens i [f_i]\}_i)= [\{[f_i\cdot \prod_{s\neq i}\hat{t}_{s,n_s}^{-v_{s,n_s}}]\}_i] = d_1^V(\{[f_i]\}_{i}),
	\end{equation*}
where the second equality follows by definition of $\phi_1$ \eqref{eq:phi1}, the last one is \eqref{eq:twistd1}, and we have used \eqref{eq:h2description} to describe elements in $\h 2F(\O)$.
\end{proof}
\subsection{Virtual negative complex representations}
\label{sub:virtualnegative}
All the computations presented in this section work exactly in the same way for virtual negative complex representations. If $V$ is a genuine complex representation with $V^G=0$, Theorem \ref{thm:computationspheres} changes sign:
\begin{equation}
	\label{eq:negativevalues}
	E\c_{G}^n(S^{-V})\cong 
	\begin{cases*}
		H^0(\X, \O(D_V))\oplus H^2(\X, \O(D_V)) & $n$ even \\
		H^1(\X, \O(D_V)) & $n$ odd.
	\end{cases*}.
\end{equation}

Everything starts with the changes in the object $S^{-V}\in \A(G)$ explained in Remark \ref{rem:appendixnegative} where the desuspensions change sign into suspensions, and therefore \eqref{eq:homcomp} changes into:
\begin{equation*}
	\label{eq:homcompvirt}
	\ov \K \xrightarrow{\phi_0'} \bigoplus_{i\geq 1} \susp {V^{H_i}}T_i \xrightarrow{\phi_1'}  \susp {V}N \rightarrow 0
\end{equation*}
where in the maps \eqref{eq:phi0prime} positive powers of the Euler classes appear instead of the negative ones. As a consequence Theorem \ref{thm:ext} changes sign:
\[
\Ext^s_{\A}(S^{-V},E\c_G)\cong \overline{H^s(\X, \O(D_V))}.
\]
and we conclude exactly as before.

\appendix
\section{Algebraic models}
\label{appendix:algmodel}
In this appendix we present a self-contained account of algebraic models for tori of any rank, therefore  only in this appendix $G=\T^r$ is an r-dimensional torus, with $r\geq 0$. We also specify that all the modules over graded rings are graded and all the maps between graded modules are graded maps.

Algebraic models are a useful tool to study rational equivariant cohomology theories. The main idea is to define an abelian category $\A(G)$ and an homology functor:
\begin{equation}
	\label{eq:homologyfunctor2}
	\pi_*^{\A}:\sp G_{\Q} \to \A(G)
\end{equation}
equipped with an Adams spectral sequence to compute maps in rational $G$-spectra. More precisely the values of the theory may be calculated by a spectral sequence:
\begin{equation}
	\label{eq:ass2}
	\Ext^{*,*}_{\A}(\pi_*^\A(X),\pi_*^\A(Y)) \Longrightarrow [X,Y]^G_{*}.
\end{equation}
In the case of tori the homology functor $\pi_*^{\A}$ can be lifted to a Quillen-equivalence \cite[Theorem 1.1]{john:torusship}):
\begin{theorem}[Greenlees-Shipley]
	\label{thm:shipley}
	The category $d\A(G)$ of differential graded objects  in $\A(G)$ is Quillen-equivalent to rational $G$-spectra.
\end{theorem}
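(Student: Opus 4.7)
The plan is to build a zigzag of Quillen equivalences between $\sp G_{\Q}$ and $d\A(G)$. The guiding principle is that in the rational setting the chromatic obstructions to algebraicizing stable homotopy vanish, so one can hope to transport $\sp G_{\Q}$ to a category of differential graded modules, provided one carries along enough of the isotropy data of $G$-spectra. For $r=0$ this is Shipley's algebraicization of rational spectra; for $r=1$ this is Greenlees' theorem that underlies \cite{john:elliptic}, and the proposal is to upgrade those two templates to a general torus.

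First I would perform a cellular/Morita reduction. Following Schwede--Shipley, identify a generating set of compact objects---taken to be (localizations of) the orbit spectra $\Sigma^{\infty}(G/H)_{+}$ as $H$ runs over closed subgroups of $G$---and present $\sp G_{\Q}$ as modules over the endomorphism $G$-equivariant ring spectrum $\mathcal{E}$ of the coproduct of these generators. Next, invoke Shipley's $H\Q$-algebra-to-DGA theorem to replace $\mathcal{E}$ by a quasi-isomorphic DGA (in fact a DG-category indexed by the subgroup poset) over $\Q$, turning the module category into DG modules. At this point spectra have been eliminated from the picture, and the only remaining task is to identify what has been obtained with $d\A(G)$.

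The third and most delicate step is to recognize this DG-category as $d\A(G)$. This requires an explicit computation: on the diagonal, the endomorphism ring recovers $H^{*}(BG/H)$ with the appropriate shifts; off the diagonal it encodes the Euler classes, restriction, and inflation maps. One would then prove a formality statement---rationally, the relevant DGA is quasi-isomorphic to its cohomology---and match the resulting graded category with the sheaf-theoretic data defining $\A(G)$, including the inversion/torsion/completion compatibilities along each connected subgroup. Induction on the rank $r$ is a natural organizing principle, with the base case $r=1$ handled by the circle version.

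The main obstacle is this last identification. The category $\A(G)$ has an intricate sheaf-like structure over the poset of closed subgroups of $G$, mixing Euler-class inversion in many different directions with torsion along others, and the compatibility conditions between distinct isotropy classes grow rapidly with $r$. Matching this precisely with the DG-category obtained from $\mathcal{E}$---and doing so functorially enough to upgrade a triangulated equivalence to a genuine Quillen equivalence of model structures---is the technical heart of \cite{john:torusship}, and any proof along the lines above has to confront exactly this combinatorial-algebraic bookkeeping.
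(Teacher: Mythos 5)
This statement is not proved in the paper at all: it is imported verbatim from the literature, namely \cite[Theorem 1.1]{john:torusship}, and the appendix only records it for use elsewhere. So there is no internal argument to compare yours against; the only fair comparison is with the actual Greenlees--Shipley proof that the citation points to, and against that standard your proposal is a roadmap rather than a proof. The first two steps (Schwede--Shipley Morita theory over a set of compact generators, followed by Shipley's $H\Q$-algebra-to-DGA algebraicization) are indeed ingredients of the real argument, but the third step --- identifying the resulting DG category with $d\A(G)$, proving the relevant formality/rigidity, and upgrading a derived equivalence to a Quillen equivalence --- is exactly where all the content lives, and you only name it as ``the technical heart'' without supplying any of it. A proof that defers its essential step to the paper being proved is not a proof.

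Beyond the gap, some of the scaffolding does not match how the argument actually runs. Greenlees--Shipley do not take a single endomorphism DGA of the orbits $\Sigma^\infty(G/H)_+$ and then prove it formal; the model $\A(G)$ is not the cohomology of such an endomorphism object in any direct sense. The proof instead proceeds by isotropy separation with respect to families of subgroups, a ``standard form'' description of rational $G$-spectra as modules over a diagram of (commutative) ring spectra built from objects like $\defp$ and the localizations $\einv H$, and then a chain of Quillen equivalences of diagram-module categories, with Shipley's theorem applied to the commutative rings in the diagram and with intrinsic-formality arguments for polynomial rings replacing a blanket formality claim; the sheaf condition, the torsion conditions, and the Euler-class inversions in $\A(G)$ arise from this diagrammatic structure, not from an off-diagonal bookkeeping of a Morita endomorphism category. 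Likewise the induction is organized over the poset of connected subgroups (a cubical/diagram induction), not a simple induction on rank with the circle as base case. If you want this statement to stand in your account, the honest options are either to cite \cite{john:torusship} as the paper does, or to genuinely carry out the identification step, which is a substantial piece of work.
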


When this happens algebraic models can also be used to build rational $G$-equivariant cohomology theories simply constructing objects in $d\A(G)$.

\subsection{Definition of the rings}
We start by defining the rings needed for the construction of $\A(G)$ \cite[Section 3.A.]{john:torusI}. We write $\F$ for the family of finite subgroups of $G$.

\begin{definition}
	For every connected subgroup $H$ of $G$ define the collection:
	\begin{equation*}
		\quotient {\F}H:=\{\tilde{H}\leq G \mid H  \text{ finite index in } \tilde H \}
	\end{equation*}
and the ring:
	\begin{equation}
		\label{eq:ofhi}
		\oef H := \prod_{\tilde H \in \F/H}H^*\left(B\left( \quotient G{\tilde H}\right)\right)
	\end{equation}
\end{definition}

\begin{rem}
	Note $\oef G=\Q$ and $\oef 1=\of$.
\end{rem}
Any containment of connected subgroups $K\subseteq H$ induces an inflation map $\oef H \to \oef K$, defined in the following way. 
\begin{definition}
	The inclusion $K\subseteq H$ of connected subgroups defines a quotient map $q:\quotient GK \to \quotient GH$, and hence
	\begin{equation}
		\label{eq:qstar}
		q_*:\quotient \F{K}\to \quotient \F{H}.
	\end{equation}
	For any $\tilde{K}\in \quotient \F{K}$ define the $\tilde{K}$th component of the inflation map $\oef H \to \oef K$ to be the composition:
	\begin{equation}
		\label{eq:inflationmap}
		\oef H=\prod_{\tilde H \in \F/H}\hbg {\tilde H} \to \hbg {q_*\tilde{K}} \to \hbg {\tilde K}
	\end{equation}
	given by projection onto the term $\hbg {q_*\tilde{K}}$ followed by the inflation map induced by the quotient $\quotient G{\tilde K} \to \quotient G{q_*\tilde K}$.
	
\end{definition}
\begin{rem}
	\label{rem:inflationmap}
	In particular for any connected subgroup $H$ we have an inflation map induced by the inclusion of the trivial subgroup:
	\begin{equation}
		\label{eq:inflation}
		i_H:\oef H \to \of 
	\end{equation}
	which is a split monomorphism of $\oef H$-modules \cite[Proposition 3.1]{john:torusII}. As a consequence $\of$ is an $\oef H$-module for every connected subgroup $H$.
\end{rem}

\subsection{Euler classes}
\label{sub:eulerclass}
Fundamental elements of these rings are Euler classes of representations of $G$, used in the localization process. For any complex representation $V$ of $G$ we want to define its Euler class $e(V)\in \of$ \cite[Section 3.B.]{john:torusI}. We require them to be multiplicative: $e(V\oplus W)=e(V)e(W)$, therefore it's enough to define Euler classes for one dimensional complex representations $V$.
\begin{definition}
	For a one dimensional complex representation $V$ of $G$, define its Euler class $e(V)\in \of$ as follows. For every finite subgroup $F$ the $F$th component $e(V)_F\in \hbg F$ is:
	\[
	e(V)_F=
	\begin{cases*}
		1 & if $V^F=0$ \\
		\bar e(V^F) & if $V^F\neq 0$,
	\end{cases*}
	\]
\end{definition}
where $\bar e(V^F)\in H^2(BG/F)$ is the classical equivariant Euler class for the $G/F$ representation $V^F$.

\begin{definition}
	For any connected subgroup $H$ of $G$ define the multiplicatively closed subset of $\of$:
	\begin{equation}
		\label{eq:eh}
		\E_H:=\{e(V)\mid V^H=0\}.
	\end{equation}
\end{definition}

\subsection{The $2$-torus}
We are mainly interested in the case of the 2-torus, therefore let us compute explicitly rings and Euler classes in this case. Recall that $\{H_i\}_{i\geq 1}$ is the collection of connected closed codimension $1$ subgroups of $\T^2$ and $H_i^j$ is the subgroup with $j$-components and identity component $H_i$. Exactly as in Definition \ref{def:zij} $z_i^j$ is a character of $\T^2$ with kernel $H_i^j$. In this case:
	\begin{itemize}
		\item $\oef {\T^2}=\Q $
		\item $\oefh i= \prod_{j\geq 1} H^*(B\T^2/{H_i^j})$
		\item $\of= \prod_{F} H^*(B\T^2/F)$ , where $F$ runs through all the finite subgroups of $\T^2$.
	\end{itemize}
For every $i,j\geq 1$:
\begin{equation}
	\label{eq:cij}
	H^*(B\T^2/{H_i^j})\cong \Q[c_{ij}]
\end{equation}  
where $c_{ij}=e(z_i^j)$ of degree $-2$ is the Euler class of the character $z_i^j$ (more precisely of the one dimensional complex representation of $\T^2$ defined by the character $z_i^j$).

\begin{definition}
	\label{def:ni}
	For every finite subgroup $F$ of $\T^2$ and every index $i\geq 1$ define $n_i=n_i(F)$ to be the only positive integer such that $H_i^{n_i}$ is generated by $H_i$ and $F$: $\gen {F,H_i}=H_i^{n_i}$.
\end{definition}

Every finite subgroup $F$ can be written as the intersection of two codimension one subgroups of $\T^2$. Therefore for every $F$ there exists two different integers $A=A(F)\geq 1$ and $B=B(F)\geq 1$ such that
\begin{equation}
	\label{eq:splitting}
	F=H_A^{n_A}\cap H_B^{n_B}.
\end{equation}
\begin{choice}
	For any finite subgroup $F$ we choose a couple of positive integers $(A,B)$ that give the decomposition \eqref{eq:splitting}.
\end{choice}

By \eqref{eq:splitting} we obtain the decomposition:
\begin{equation}
	\label{eq:xaxb}
	H^*(B\T^2/F)\cong H^*(B\T^2/{\ha})  \otimes H^*(B\T^2/{\hb}) \cong \Q[\xa,\xb].
\end{equation} 
Where $\xa:=e(\za)$, $\xb:=e(\zb)$ have both degree $-2$ are the Euler classes respectively of $\za$ and $\zb$.
\begin{definition}
	\label{def:xi}
	For every $i\geq 1$ define
	\begin{equation}
		\label{eq:xi}
		x_i:=e(z_i^{n_i})\in H^2(B\T^2/F).
	\end{equation} 
	Notice it is an integral linear combination of $\xa$ and $\xb$.
\end{definition}

\begin{rem}
\label{rem:inflationcomp}
	With these choices of coordinates \eqref{eq:cij} and \eqref{eq:xaxb} the inflation map \eqref{eq:inflation} on the $F$th component of the target $\of$ can be easily described:
	\begin{equation}
	\label{eq:inflationhi}
	\oefh i = \prod_{j\geq 1}H^*(B\T^2/{H_i^j}) \to H^*(B\T^2/{H_i^{n_i}}) \rightarrowtail H^*(B\T^2/F).
	\end{equation}
	The first map of \eqref{eq:inflationhi} is the projection onto the $n_i$th component since $q_*(F)=\gen {H_i,F}=H_i^{n_i}$ by definition of the index $n_i$. The second map of \eqref{eq:inflationhi} is the natural inclusion of $\Q$-algebras sending the generator $c_{i,n_i}$ to $x_i$, since by \eqref{eq:xi} they are the same Euler class $e(z_i^{n_i})$ for the two different rings.
\end{rem}

\subsection{Description of $\A(G)$}
We briefly recap the description of $\A(G)$ \cite[Definition 3.9]{john:torusI}. The objects of $\A(G)$ are sheaves of modules over the poset of connected subgroups of $G$ with inclusions.

\begin{definition}
	An object $X\in \A(G)$ is specified by the following pieces of data:
	\begin{enumerate}
		\item For every connected subgroup $H$ an $\oef H$-module $\phi^HX$.
		\item For every containment of connected subgroups $K\subseteq H$ an $\oef K$-modules map:
		\begin{equation}
			\label{eq:strmap}
			\phi^KX\to \E_{\quotient HK}^{-1}\oef K\tens {\oef H}\phi^HX.
		\end{equation}
	\end{enumerate}
	Then $X$ is a sheaf over the space of connected subgroups of $G$. This specifically means that for every connected subgroup $H$, the sheaf $X$ has value the $\of$-module:
	\begin{equation}
		\label{eq:valuex}
		X(H):=\E_H^{-1}\of \tens {\oef H} \phi^HX,
	\end{equation}
	and that for every containment $K\subseteq H$ of connected subgroups, $X$ has a structure map of $\of$-modules:
	\begin{equation}
		\label{eq:strmapofx}
		\beta_K^H:X(K) \to X(H).
	\end{equation}
	The map \eqref{eq:strmapofx} is obtained tensoring the $\oef K$-modules map \eqref{eq:strmap} with the $\oef K$-module $\E_K^{-1}\of$. Moreover $X$ satisfies the condition that for every connected subgroup $H$ the $\of$-modules structure map $\beta_1^H:X(1)\to X(H)$ is the map inverting the multiplicatively closed subset of Euler classes $\E_H$ \eqref{eq:eh}.
\end{definition}
\begin{rem}
	Notice that by Remark \ref{rem:inflationmap} the inflation map $i_K$ makes $\of$ an $\oef K$-module. Moreover The structure map \eqref{eq:strmap} is well defined from \eqref{eq:strmap}, since:
	\begin{equation}
		\label{eq:strtensored}
		\E_K^{-1}\of\tens {\oef K} \E_{\quotient HK}^{-1}\oef K \cong \E_H^{-1}\of.
	\end{equation}
\end{rem}

\begin{ex}
	For the 2-torus an object $X\in \A(\T^2)$ has the shape:
	\begin{equation*}
		\begin{tikzcd}
			& X(\T^2) &  \\
			X(H_1) \arrow[ur, "\beta_{H_1}^{\T^2}"] & X(H_2) \arrow[u] & \dots \arrow[ul] \\
			& X(1) \arrow[ur] \arrow[u] \arrow[ul, "\beta_1^{H_1}"] & 
		\end{tikzcd}=
		\left[ \begin{tikzcd}
			X(\T^2) \\
			X(H_i) \arrow[u] \\
			X(1) \arrow[u]  
		\end{tikzcd}\right] 
	\end{equation*}
with infinite values $X(H_i)$ in the middle row, one vertex $X(\T^2)$ and one value $X(1)$ at the bottom level. By \eqref{eq:valuex}: 
\begin{equation}
	\begin{split}
		X(\T^2) & =\etinv \tens {\Q} \phi^{\T^2}X \\
		X(H_i) & =\ethinv i \of \tens {\oefh i}\phi^{H_i}X \\
		X(1) & =\of \tens {\of} \phi^1X = \phi^1X
	\end{split}
\end{equation}
\end{ex}
\begin{nota}
    \label{notation:tensi}
	a tensor product with no ring specified will always mean over $\Q$. For any $i\geq 1$ we denote $\tens i=\tens {\oefh i}$ the tensor product over the ring $\oefh i$ or when we are considering the $F$th component: $\tens i=\tens {\hbti {n_i}}$.
\end{nota}
\begin{ex}
	\label{ex:eulerclasses}
	For the 2-torus using the coordinates we have defined (\eqref{eq:cij}, \eqref{eq:xaxb}, and  \eqref{eq:xi}), we can easily describe the localizations at the Euler classes:
	\begin{itemize}
		\item In $\eghinv i \hbti j$ we are inverting the Euler class $c_{ij}$.
		\item In $\ehinv i\hbt F$ we are inverting all the Euler classes $x_j$ with $j\geq 1$ and $j\neq i$.
		\item In $\etinv \hbt F$ we are inverting all the Euler classes $x_j$ with $j\geq 1$. 
	\end{itemize}
\end{ex}
\begin{ex}
	There is a structure sheaf $\O\in \A(G)$ \cite[Definition 3.3]{john:torusI} obtained using as modules the base rings: $\phi^H\O=\oef H$, and as structure maps the natural inclusions:
	\[
	\oef K \rightarrow \E_{\quotient HK}^{-1}\oef K\tens {\oef H}\oef H.
	\]
\end{ex}
\begin{definition}
	A morphism $f:X\to Y$ in the category $\A(G)$ is the data of a (graded) $\oef H$-module map $\phi^Hf:\phi^HX \to \phi^HY$ for every connected subgroup $H$, compatible with the structure maps of $X$ and $Y$ (it makes the evident commutative diagrams between different levels commute \cite[Definition 3.6]{john:torusI}).
\end{definition}
\begin{rem}
	\label{rem:bottomlevel}
	A morphism $f:X\to Y$ in $\A(G)$ is almost determined by what it does at the trivial subgroup level $f(1):X(1)\to Y(1)$ (that we will call bottom level). This is because for any connected subgroup $H$ the map $f$ at the $H$th level $f(H):X(H)\to Y(H)$ is then $\einv H f(1)$. Therefore properties like injectivity, surjectivity or exactness for a sequence of morphisms can be checked at the bottom level.
\end{rem}

\subsection{Injectives in $\A(G)$}
The injective objects in $\A(G)$ that we will use are constant below a certain connected subgroup $H$, and zero elsewhere \cite[Section 4.A.]{john:torusI}.
\begin{definition}
	An object $X\in \A(G)$ is concentrated below a connected subgroup $H$ if $X(K)=0$ for every connected subgroup $K\nsubseteq H$. We denote $\A(G)_H$ the full subcategory of $\A(G)$ of objects concentrated below $H$.
\end{definition}
\begin{definition}
	If $H$ is a connected subgroup of $G$, and $T$ is a graded torsion $\oef H$-module, define $f_H(T)\in \A(G)$ to be the constant sheaf below $H$ with the following values:
	\begin{equation}
		\label{eq:deffh}
	f_H(T)(K):=
	\begin{cases*}
		\einv H\of \tens {\oef H} T & if $K \subseteq H$ \\
		0 & if $K \nsubseteq H$
	\end{cases*}
	\end{equation}
	 and structure maps either identities or zero.
\end{definition}
\begin{rem}
	We require $T$ to be torsion so that when we invert $\einv K$ for $K\nsubseteq H$ we obtain zero. Therefore this requirement can be dropped when $H=G$.
\end{rem}
\begin{lemma}[Lemma 4.1 of \cite{john:torusI}]
	\label{lem:adj}
	For any connected subgroup $H$ of $G$ there is an adjunction:
	\begin{equation*}
		\begin{tikzcd}
			\A(G)_H\ar[r,bend left,"\phi^{H}",""{name=A, below}] & 
			\quad \quad\text{Tors-$\oef H$-Mod} \ar[l,bend left,"f_H",""{name=B,above}] \ar[from=A, to=B, symbol=\dashv]
		\end{tikzcd}
	\end{equation*}
	where the left adjoint is the evaluation $\phi^H$. Moreover for any torsion $\oef H$-module $T$, and object $X\in \A(G)$ we have:
	\begin{equation}
		\label{eq:adjunction}
		\Hom_{\oef H}(\phi^HX, T)\cong \Hom_{\A(G)}(X, f_H(T)).
	\end{equation}
\end{lemma}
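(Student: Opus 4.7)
The plan is to prove the adjunction by exhibiting an explicit natural bijection between $\Hom_{\A(G)}(X, f_H(T))$ and $\Hom_{\oef H}(\phi^H X, T)$. In one direction, a morphism $\alpha : X \to f_H(T)$ would be sent to its $H$-component $\phi^H \alpha : \phi^H X \to T$, where we use the identification $\phi^H f_H(T) \cong T$ forced by the prescribed value $f_H(T)(H) = \einv H \of \tens {\oef H} T$. The interesting direction is the reverse map, which I would construct componentwise.

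Given $g : \phi^H X \to T$, I would build $\tilde g : X \to f_H(T)$ by specifying its component $\phi^K \tilde g$ at each connected subgroup $K$ of $G$. For $K \not\subseteq H$, both source and target vanish (the source because $X \in \A(G)_H$, the target by the definition of $f_H$), so $\phi^K \tilde g = 0$. For $K \subseteq H$, I would identify $\phi^K f_H(T) \cong \einv{H/K} \oef K \tens {\oef H} T$ (which is forced by requiring the structure maps of $f_H(T)$ between levels below $H$ to induce isomorphisms after inverting $\E_{H/K}$), and define $\phi^K \tilde g$ as the composite of the structure map $\phi^K X \to \einv{H/K} \oef K \tens {\oef H} \phi^H X$ of $X$ with $1 \otimes g$. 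Compatibility of $\tilde g$ with structure maps between two levels $K \subseteq L$ is then automatic: when $K, L \subseteq H$ it follows from the naturality of the structure maps of $X$, and when $L \not\subseteq H$ the relevant square is trivial since $\phi^L X = 0$ and $\phi^L f_H(T) = 0$. That the two assignments are mutually inverse reduces to the observation that a morphism out of an object of $\A(G)_H$ is determined by its top-level component together with the sheaf structure, together with the fact that the structure map of $X$ at the top level $H$ is an identity.

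The one step requiring nontrivial input is verifying that $f_H(T)$ is well defined as an object of $\A(G)$, not merely of $\A(G)_H$: for $K \not\subseteq H$, the composite structure map $f_H(T)(1) = \einv H \of \tens {\oef H} T \to f_H(T)(K) = 0$ must be the $\E_K$-localization, so one needs $\E_K^{-1} \of \tens {\oef H} T = 0$. This is exactly where the torsion hypothesis on $T$ enters: since $K \not\subseteq H$, the set $\E_K$ contains Euler classes of characters trivial on $H$ but nontrivial on $K$, and the torsion hypothesis ensures that, acting via the inflation \eqref{eq:inflation}, such Euler classes locally annihilate $T$ and force the localization to vanish. Once this bookkeeping is in place, the adjunction isomorphism \eqref{eq:adjunction} is a routine consequence of the constructions above, and naturality in both $X$ and $T$ is evident from the explicit formulas.
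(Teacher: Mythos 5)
This lemma is quoted verbatim from \cite{john:torusI} and the present paper offers no proof of it, so your argument can only be measured against the standard one — and it essentially \emph{is} the standard one: identify $\phi^K f_H(T)\cong \einv{H/K}\oef K\tens{\oef H}T$ for $K\subseteq H$, send $g\colon\phi^H X\to T$ to the family $(1\otimes g)\circ(\text{structure map of }X)$, and recover a map into $f_H(T)$ from its $H$-component because the structure maps of $f_H(T)$ below $H$ are isomorphisms; the only genuinely nontrivial input, the vanishing $\einv K\bigl(\einv H\of\tens{\oef H}T\bigr)=0$ for $K\nsubseteq H$, is correctly located and attributed to the torsion hypothesis. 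Two points of polish rather than gaps: (i) for $L\nsubseteq H$ you neither have nor need $\phi^L X=0$ — membership in $\A(G)_H$ only forces the localized value $X(L)=\einv L\of\tens{\oef L}\phi^L X$ to vanish, and \eqref{eq:adjunction} is in any case asserted for arbitrary $X\in\A(G)$; your squares commute simply because $\einv{L/K}\oef K\tens{\oef L}\phi^L f_H(T)=0$, and likewise the determination of a morphism by its $H$-component is a property of the target $f_H(T)$, not of the source lying in $\A(G)_H$. (ii) In the vanishing step one should invoke the precise definition of ``torsion'': for each element of $T$ one needs an annihilating Euler class of a character of $G/H$ that is nontrivial on the image of $K$ in $G/H$, so that it lies simultaneously in the image of the inflation $\oef H\to\of$ and in the multiplicative set $\E_K$; since characters of the torus $G/H$ separate points of the nontrivial subgroup $KH/H$, such classes exist, and this is exactly the verification carried out in \cite{john:torusI}.
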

This in order allows us to transfer torsion injectives $\oef H$-modules into injective objects in $\A(G)$. First notice that if we are given for every $\tilde H\in \quotient {\F}H$ an $\hbg {\tilde H}$-torsion module $T(\tilde H)$, then $\bigoplus_{\quotient {\F}H}T(\tilde H)$ is naturally a torsion $\oef H$-module, with the action given component by component.
\begin{corollary}[Lemma 5.1 of \cite{john:torusI}]
	\label{cor:inj1}
	Suppose $\{H_i\}_{i\geq 1}$ is the collection of all connected subgroups of $G$ of a fixed dimension. If $f_{H_i}(T_i)$ is injective for every $i$, then so is $\bigoplus_{i\geq 1}f_{H_i}(T_i)$.
\end{corollary}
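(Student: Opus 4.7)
The strategy is to verify the lifting property directly, using the adjunction of Lemma \ref{lem:adj} and exploiting the key feature of the dimension hypothesis: since the subgroups $\{H_i\}_{i\geq 1}$ all have the same dimension $d$, they are pairwise incomparable, so $H_i \not\subseteq H_j$ whenever $i \neq j$.

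The first consequence of this incomparability is that $f_{H_i}(T_i)(H_j) = 0$ for $i \neq j$, so that $\phi^{H_j}(\bigoplus_{i \geq 1} f_{H_i}(T_i)) \cong T_j$: at the top dimension only one summand contributes. Applied with $X = f_{H_j}(T_j)$ itself, the adjunction \eqref{eq:adjunction} together with the hypothesis that $f_{H_i}(T_i)$ is injective forces each $T_i$ to be an injective torsion $\oef {H_i}$-module. This is the module-theoretic input we need.

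Given a monomorphism $A \hookrightarrow B$ in $\A(G)$ and a map $\phi \colon A \to \bigoplus_{i\geq 1} f_{H_i}(T_i)$, I decompose $\phi = (\phi_i)_i$ into its components, and apply the adjunction of Lemma \ref{lem:adj} to each component to get $\oef {H_i}$-module maps $\psi_i \colon \phi^{H_i} A \to T_i$. Since the evaluation functor $\phi^{H_i}$ is exact on monomorphisms, $\phi^{H_i} A \hookrightarrow \phi^{H_i} B$, and injectivity of $T_i$ provides extensions $\tilde\psi_i \colon \phi^{H_i} B \to T_i$. These translate back via the adjunction to maps $\tilde\phi_i \colon B \to f_{H_i}(T_i)$, each extending $\phi_i$.

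The main obstacle, which I expect to require the most care, is to show that the family $(\tilde\phi_i)_i$ assembles into a single morphism $\tilde\phi \colon B \to \bigoplus_{i\geq 1} f_{H_i}(T_i)$ landing in the direct sum, not merely the product. By Remark \ref{rem:bottomlevel} it suffices to check this finite-support condition at the bottom level, where $\tilde\phi_i$ factors as the composite $B(1) \xrightarrow{\beta_1^{H_i}} B(H_i) \to f_{H_i}(T_i)(H_i)=T_i \to f_{H_i}(T_i)(1)$. One then argues that choosing the extensions canonically (in particular, setting $\tilde\psi_i = 0$ for those indices $i$ where $\psi_i = 0$, which are all but finitely many when evaluated on a given element of $B(1)$ after a short chase through the structure maps) preserves the finite-support condition that $\phi$ already enjoys. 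Once this assembly is established, $\tilde\phi$ is the required extension and $\bigoplus_{i\geq 1} f_{H_i}(T_i)$ is injective.
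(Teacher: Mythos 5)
The paper itself does not prove this statement---it quotes it as Lemma 5.1 of \cite{john:torusI}---so yours is a from-scratch attempt at the cited result. Your first two steps are fine in outline (at a subgroup $H_j$ of the given dimension only the $j$th summand survives, and injectivity of each $T_i$ can be extracted from injectivity of $f_{H_i}(T_i)$ via the adjunction, modulo checking $\phi^{H_i}f_{H_i}\cong \mathrm{id}$ and that $f_{H_i}$ preserves monomorphisms). The genuine gap is exactly at the point you flag, and your proposed remedy does not close it. Extending each $\psi_i$ separately only produces an element of $\prod_{i\geq 1}\Hom_{\oefh i}(\phi^{H_i}B,T_i)$, whereas the natural map $\Hom_{\A(G)}\bigl(B,\bigoplus_{i\geq 1} f_{H_i}(T_i)\bigr)\to \prod_{i\geq 1}\Hom_{\oefh i}(\phi^{H_i}B,T_i)$ is injective but in general \emph{not} surjective: a family of component maps defines a morphism into the sum only if at the bottom level every element of $B(1)$ lands in $\bigoplus_{i\geq 1}\ehinv i\of\tens{\oefh i}T_i$, i.e.\ has finitely many nonzero components. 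Your fix confuses two finiteness conditions: the hypothesis that $\phi$ is a morphism into the direct sum gives elementwise finite support on $A(1)$, but it does not make all but finitely many of the maps $\psi_i$ zero. Take $A=\bigoplus_{i\geq 1}f_{H_i}(T_i)$ and $\phi=\mathrm{id}$: every $\psi_i$ is nonzero, so ``set $\tilde\psi_i=0$ where $\psi_i=0$'' constrains nothing, and for $b\in B(1)$ not in $A(1)$ the arbitrarily chosen extensions can be nonzero on $b$ for infinitely many $i$. That this is a real obstruction and not a bookkeeping issue is seen already on objects with cyclic bottom level: for the structure sheaf, a family of nonzero maps $\oefh i\to T_i$ for all $i$ typically sends $1\in\of$ to an element with infinitely many nonzero components (the inflation $\oefh i\to\of$ is split and inverting $\E_{H_i}$ does not kill $1\otimes t_i$), so such a family corresponds to no morphism into the direct sum.

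This cannot be repaired merely by a cleverer index-by-index choice of extensions: the possible failure of a direct sum of injectives to be injective is precisely this phenomenon (for modules over a ring it is equivalent to the failure of the Noetherian condition), and the entire content of the cited lemma is the structural finiteness input that rescues it here. One standard mechanism would be a Baer-type reduction showing injectivity of $\bigoplus_{i\geq 1}f_{H_i}(T_i)$ can be tested on extensions over inclusions whose quotient is generated by a single element, together with the observation that a map out of such a ``small'' object factors through a finite sub-sum, where finite direct sums of injectives are injective; some argument of this kind (or the specific one in \cite{john:torusI}) has to be supplied. As written, your proposal defers the whole difficulty to a ``short chase through the structure maps'' that, as the examples above show, cannot succeed in the stated form; the key step of the proof is therefore missing.
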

\begin{corollary}[Corollary 5.2 of \cite{john:torusI}]
	\label{cor:inj2}
	If for every $\tilde H \in \quotient {\F}H$, $T(\tilde H)$ is a graded torsion injective $\hbg {\tilde H}$-module. Then $f_H(\bigoplus_{\quotient {\F}H}T(\tilde H))$ is injective in $\A(G)$.
\end{corollary}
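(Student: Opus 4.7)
The strategy is to reduce the injectivity of $f_H(T)$ in $\A(G)$, where $T := \bigoplus_{\tilde H \in \F/H} T(\tilde H)$, to the injectivity of $T$ as a torsion $\oef H$-module, and then invoke the adjunction of Lemma \ref{lem:adj}.

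The first step is to observe that $T$ is torsion injective as an $\oef H$-module. Because $\oef H = \prod_{\tilde H \in \F/H} \hbg{\tilde H}$ is a product of rings, the idempotents coming from the factors act orthogonally; together with the torsion hypothesis this implies that every torsion $\oef H$-module decomposes canonically as a direct sum of torsion $\hbg{\tilde H}$-modules, yielding an equivalence of categories
\[
\text{Tors-}\oef H\text{-Mod} \;\simeq\; \prod_{\tilde H \in \F/H} \text{Tors-}\hbg{\tilde H}\text{-Mod}.
\]
Under this equivalence $T$ corresponds to the tuple $(T(\tilde H))_{\tilde H}$, which is injective factor by factor by hypothesis. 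Hence $T$ is injective in torsion $\oef H$-modules.

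Next I would extend the adjunction $\phi^H \dashv f_H$ of Lemma \ref{lem:adj} from $\A(G)_H$ to all of $\A(G)$. The key point is that $f_H(T)$ vanishes on connected subgroups $K \not\subseteq H$, so any morphism $X \to f_H(T)$ in $\A(G)$ depends only on the restriction of $X$ to subgroups below $H$; this restriction lies in $\A(G)_H$, so
\[
\Hom_{\A(G)}(X, f_H(T)) \;\cong\; \Hom_{\oef H}(\phi^H X, T),
\]
naturally in $X \in \A(G)$. To conclude injectivity of $f_H(T)$ it is therefore enough to show that $\Hom_{\oef H}(\phi^H(-), T)$ is exact, which in turn follows from injectivity of $T$ (Step 1) together with exactness of the evaluation $\phi^H$ on $\A(G)$. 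Exactness of $\phi^H$ in turn is checked by Remark \ref{rem:bottomlevel}: exactness at the trivial-subgroup level passes upward to every $K$-level via the localisation $\E_K^{-1}\of \otimes_{\oef K}(-)$, which is exact, and the $H$-level data recovers $\phi^H$ after inverting $\E_H$ in the appropriate way.

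The main obstacle I expect is the verification in the final step that $\phi^H:\A(G) \to \text{Tors-}\oef H\text{-Mod}$ really does preserve monomorphisms (rather than only restrict to an adjoint on $\A(G)_H$). This is where the precise shape of the definition of $\phi^H$ matters: one must confirm that the operation of reading off the $\oef H$-module underlying the $H$-level, before localisation, behaves compatibly with sub-objects in $\A(G)$. Once that technicality is in place, the result is formal from Steps 1 and 2 together with the adjunction.
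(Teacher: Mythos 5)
Your Step 1 and your use of the adjunction are essentially fine (in fact \eqref{eq:adjunction} is already stated for arbitrary $X\in \A(G)$, so no extension of Lemma \ref{lem:adj} is needed), but the final step has a genuine gap, and it is not the obstacle you flag. Injectivity of $T=\bigoplus_{\tilde H}T(\tilde H)$ in the category of \emph{torsion} $\oef H$-modules does not make $\Hom_{\oef H}(\phi^H(-),T)$ exact on $\A(G)$: for a general object $X$ the module $\phi^H X$ is not torsion (e.g.\ $\phi^H\O=\oef H$ for the structure sheaf), so your argument needs $T$ to be injective as a graded $\oef H$-module, and this is false whenever infinitely many $T(\tilde H)$ are nonzero. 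Indeed $\oef H=\prod_{\tilde H\in \F/H}\hbg{\tilde H}$ is a non-Noetherian infinite product: take the graded ideal $I=\bigoplus_{\tilde H}\hbg{\tilde H}\subset \oef H$ and the map $I\to T$ sending the idempotent $e_{\tilde H}$ of each factor to a chosen nonzero element of $T(\tilde H)$ in a fixed degree; an extension to $\oef H$ would send $1$ to an element of the direct sum $T$ whose $\tilde H$-component is nonzero for infinitely many $\tilde H$, which is impossible. (Each single $T(\tilde H)$ \emph{is} injective over $\oef H$, because $e_{\tilde H}$ splits off the relevant summand of any module; it is exactly the infinite direct sum that fails --- the classical failure of direct sums of injectives over non-Noetherian rings.) So the conclusion cannot be purely formal from torsion injectivity of $T$ plus exactness of $\phi^H$.

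What saves the corollary (and what the proof in \cite{john:torusI} has to use) is that one only needs the extension property against monomorphisms of the special form $\phi^H X\to \phi^H Y$ for a monomorphism in $\A(G)$: the quasi-coherence constraints on objects (the value at each $K\subseteq H$ is obtained from the bottom level by inverting Euler classes) prevent an inclusion such as $I\subset \oef H$ above from being realized as $\phi^H$ of a subobject. Concretely, one can reduce to objects concentrated below $H$, where $\phi^H$ does take torsion values and the adjunction with torsion modules applies, and one must then argue that the componentwise extensions into the factors $T(\tilde H)$ can be chosen to land in the direct sum rather than the product; this finiteness step is where the structure of $\A(G)$, and not merely exactness of $\phi^H$, enters. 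By contrast, the issue you single out --- whether $\phi^H$ preserves monomorphisms --- is comparatively harmless; the missing idea is the passage from injectivity of $T$ among torsion modules to the extension property against the non-torsion modules $\phi^H X$ that actually occur.
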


\subsection{Spheres of complex representations}
\label{sub:spheres}
We can now define the fundamental homology functor $\pi_*^{\A}$ \cite[Definition 1.4]{john:torusI}. Given a rational $G$-spectrum $X$ we can define the sheaf $\pi_*^{\A}(X)\in \A(G)$ that on a connected subgroup $H$ takes the value
\[
\pai \A(X)(H):=\einv H\of \tens {\oef H}\pi_*^{G/H}(\defhp H \wedge \fix HX).
\]
Where $\fix H$ is the geometric fixed point functor, $E\F_+$ is the universal space for the family $\F$ of finite subgroups with a disjoint basepoint added, and $\defp=F(E\F_+, S^0)$ is its functional dual (The function spectrum of maps from $E\F_+$ to $S^0$). 
\begin{ex}
	For the sphere spectrum we obtain the structure sheaf $\O$ \cite[Theorem 1.5]{john:torusI}:
	\[
	\pai \A(S^0)(H) =\einv H\of \tens {\oef H}\oef H
	\]
\end{ex}
Given a complex representation $V$ of $G$ with $V^G=0$ we want to make explicit the object $\pai \A(S^V)$ \cite[Section 2.B.]{john:torusII}. The value at each subgroup is computed with fixed points of the representation:
\begin{equation}
	\label{eq:sphere}
	\pai \A(S^V)(H) =\einv H\of \tens {\oef H}\Sigma^{V^H}\oef H.
\end{equation}
To describe the structure maps it is convenient to use the suspensions of the units:
\begin{equation*}
	\i {V^H} :=\Sigma^{V^H}(1)\in \Sigma^{V^H}\oef H
\end{equation*}
so that for every inclusion of connected subgroups $K\subseteq H$ the structure map $\beta_K^H$ is determined by the suspended unit:
\begin{equation}
	\label{eq:structuremaps}
	\beta_K^H(\i {V^K})= e(V^K-V^H)^{-1}\otimes \i {V^H}
\end{equation}
where the difference of the two representations simply means a splitting
\[
V^K=V^H\oplus (V^K-V^H).
\]
\begin{rem}
	\label{rem:appendixnegative}
	The content of this section applies also in the case of a virtual complex representation $V=V_0-V_1$. The only thing to specify is the Euler class $e(V)=e(V_0)/e(V_1)$. As a result in \eqref{eq:sphere} we have the desuspension  $\Sigma^{-V_1^H}\oef H$ instead of the suspension.
\end{rem}


\nocite{*}
\printbibliography

\end{document}